\RequirePackage{amsthm}% must precede sn-jnl.cls so that its theorem styles (thmstyleone, ...) are defined
\documentclass[pdflatex,sn-mathphys-num]{sn-jnl}% Math and Physical Sciences Numbered Reference Style

\usepackage{amsmath,amssymb,amsthm}
\usepackage{mathtools}
\usepackage{booktabs}
\usepackage{tikz}
\hypersetup{pdftitle={A refinement of the edge theorem for order and chain polytopes},pdfauthor={Jon Lee}}

\theoremstyle{thmstyleone}
\newtheorem{theorem}{Theorem}
\newtheorem{lemma}[theorem]{Lemma}
\newtheorem{corollary}[theorem]{Corollary}
\newtheorem{proposition}[theorem]{Proposition}
\newtheorem{conjecture}[theorem]{Conjecture}
\theoremstyle{thmstylethree}
\newtheorem{case}{Case}
\newtheorem{example}[theorem]{Example}
\theoremstyle{thmstyletwo}
\newtheorem{remark}[theorem]{Remark}

\newcommand{\Ocal}{\mathcal{O}}
\newcommand{\Ccal}{\mathcal{C}}
\newcommand{\RR}{\mathbb{R}}
\newcommand{\down}{{\downarrow}}
\newcommand{\up}{{\uparrow}}
\DeclareMathOperator{\ind}{ind}
\newcommand{\minc}{\min\nolimits^{\circ}}
\newcommand{\cl}[1]{\overline{#1}}
\begin{document}

\title[Refining the edge theorem for order and chain polytopes]{A refinement of the edge theorem\break for order and chain polytopes}

\author*[1]{\fnm{Jon} \sur{Lee}}\email{jonxlee@umich.edu}
% ORCID 0000-0002-8190-1091: enter it in Editorial Manager at submission (Springer links it to the published article)

\affil*[1]{\orgname{University of Michigan}, \orgaddress{\city{Ann Arbor}, \state{MI}, \country{USA}}}

%% Order asks for an abstract of 150--250 words. The original (Elsevier) abstract is kept below, commented out.
\abstract{For a finite poset, we partition the edges of the order polytope and of the chain polytope into classes indexed by the connected convex subsets of the poset, and we establish that corresponding classes have the same cardinality. This yields a closed formula for the common number of edges, and it identifies the bijection of Hibi, Li, Sahara and Shikama as a disjoint union of simple bijections between corresponding classes, explaining why it is a bijection and how it acts on edge directions and lengths. As consequences, the two polytopes have equally many edge directions, with matching multiplicities, and, for every $m$, equally many pairs of vertices whose symmetric difference has $m$ components; the chain polytope has at least as many edges parallel to each coordinate subspace as the order polytope; and the edge lengths of the chain polytope are dominated by those of the order polytope, with strict inequality in the last two comparisons exactly when the poset contains a three-element chain. We discuss implications for linear and convex combinatorial optimization over ideals and antichains. We also express the number of edges in terms of the comparability graph, extending to stable-set polytopes of all graphs; give a recursion for series-parallel posets and closed formulas for layered, zigzag and crown posets; and characterize the distributive lattices for which the two polytopes are unimodularly equivalent. Finally, either diameter can exceed the other arbitrarily; both are bounded by the width, and they coincide for series-parallel, zigzag and crown posets.}

\keywords{order polytope, chain polytope, partially ordered set, edges, stable-set polytope, convex combinatorial optimization}

\pacs[MSC Classification]{52B05, 52B12, 06A07, 05C69, 90C27}

\maketitle

%=====================================================================
\section{Introduction}
%=====================================================================

Throughout, $P=\{a_1\,,\dots,a_d\}$ is a finite poset with partial order $\preceq$; we write $a\prec b$ if $a\preceq b$ and $a\neq b$, and we reserve $\le$ and $<$ for numbers. For $W\subseteq P$, we let $\rho(W):=\sum_{i\,:\,a_i\in W}\mathbf e_i\in\RR^d$, where $\mathbf e_1\,,\dots,\mathbf e_d$ are the standard unit vectors. A subset $I\subseteq P$ is an \emph{order ideal} (or down-set; we say simply \emph{ideal}) if $a\in I$ and $b\preceq a$ imply $b\in I$, and a subset $U\subseteq P$ is an \emph{up-set} if $a\in U$ and $b\succeq a$ imply $b\in U$; a subset $A\subseteq P$ is an \emph{antichain} if its elements are pairwise incomparable. The empty set is both an ideal and an antichain. Following Hibi, Li, Sahara and Shikama~\cite{HLSS}, the \emph{order polytope} and the \emph{chain polytope} of $P$ are
\begin{align*}
  \Ocal(P)&:=\{x\in\RR^d:\ 0\le x_i\le 1 \text{ for all } i,\ x_i\ge x_j \text{ whenever } a_i\preceq a_j\},\\
  \Ccal(P)&:=\Bigl\{x\in\RR^d:\ x\ge 0,\ \textstyle\sum_{i\,:\,a_i\in K}x_i\le 1 \text{ for every chain } K \text{ of } P\Bigr\}.
\end{align*}
(\citet{Stanley} uses the reverse inequalities in $\Ocal(P)$; the two versions are exchanged by $x\mapsto\mathbf 1-x$. The order polytope appears earlier in \citet{Geissinger}.) For a finite poset $Q$, we let $\mathcal J(Q)$ denote the set of ideals of $Q$, which is a distributive lattice under inclusion, so that $|\mathcal J(Q)|$ is the number of ideals of $Q$. Because $I\mapsto\max I$ is a bijection from the ideals of $Q$ to the antichains of $Q$, $|\mathcal J(Q)|$ is also the number of antichains of $Q$; note that $|\mathcal J(\emptyset)|=1$.

For $W\subseteq P$, we let $\down W:=\{a: a\preceq b,\ \exists\, b\in W\}$ and $\up W:=\{a: a\succeq b,\ \exists\, b\in W\}$, and we let $\max W$ and $\min W$ denote the sets of maximal and minimal elements of $W$ in the induced order. For $a\preceq b$, the \emph{interval} is $P[a,b]:=\{c: a\preceq c\preceq b\}$; we say that $b$ \emph{covers} $a$ if $a\prec b$ and $P[a,b]=\{a,b\}$, and we describe small posets by their cover relations, as drawn in their Hasse diagrams. The set $W$ is \emph{convex} if $a\preceq c\preceq b$ with $a,b\in W$ implies $c\in W$, and its \emph{convex closure} is $\cl W:=\{c: b\preceq c\preceq b',\ \exists\, b,b'\in W\}$, the smallest convex set containing $W$. The set $W$ is \emph{connected} if any two elements of $W$ are joined by a sequence of elements of $W$ in which consecutive elements are comparable; the maximal connected subsets of $P$ are its \emph{connected components}. A \emph{three-element chain} in $W$ is a triple $a\prec b\prec c$ of elements of $W$. The \emph{height} (resp., \emph{width}) of $P$ is the largest cardinality of a chain (resp., antichain) of $P$; we write $w(P)$ for the width. Finally, for $S\subseteq P$ we define
\[
  S^{\parallel}:=\{c\in P: c \text{ is incomparable to every element of } S\}=P\setminus(\up S\cup\down S),
\]
which we regard as an induced subposet of $P$. Note that $S\cap S^{\parallel}=\emptyset$.

The \emph{comparability graph} $G(P)$ of $P$ has vertex set $P$, with distinct elements adjacent when they are comparable. A \emph{stable set} (or independent set) of a graph is a set of pairwise nonadjacent vertices, and the \emph{stable-set polytope} of a graph is the convex hull of the characteristic vectors of its stable sets. The antichains of $P$ are exactly the stable sets of $G(P)$, so $\Ccal(P)$ is the stable-set polytope of $G(P)$ (see \cite{Stanley}).

For a polytope $\mathcal Q$, we write $f_k(\mathcal Q)$ for the number of $k$-dimensional faces of $\mathcal Q$, and $(f_0(\mathcal Q),f_1(\mathcal Q),\dots)$ for its \emph{$f$-vector}. Two full-dimensional lattice polytopes $\mathcal Q,\mathcal Q'\subseteq\RR^d$ are \emph{unimodularly equivalent} if $\mathcal Q'=U\mathcal Q+v$ for some integer matrix $U$ with $\det U=\pm1$ and some $v\in\mathbb Z^d$.

Let $X$ denote the five-element poset with $a,b\prec c\prec g,h$ (Figure~\ref{fig:X}). Following \citet{HL-unimod}, we say that $X$ \emph{appears in} $P$ if $P$ has elements $a,b,c,g,h$ with $a,b\prec c\prec g,h$, $a$ incomparable to $b$, and $g$ incomparable to $h$; otherwise $P$ is \emph{$X$-free}.

\begin{figure}[ht]
\centering
\begin{tikzpicture}[scale=0.75,every node/.style={circle,draw,inner sep=1.4pt,fill=white}]
  \node[label=left:$a$] (a) at (-1,0) {};
  \node[label=right:$b$] (b) at (1,0) {};
  \node[label=right:$c$] (c) at (0,1) {};
  \node[label=left:$g$] (g) at (-1,2) {};
  \node[label=right:$h$] (h) at (1,2) {};
  \draw (a)--(c)--(b); \draw (g)--(c)--(h);
\end{tikzpicture}
\caption{The Hasse diagram of the poset $X$.}\label{fig:X}
\end{figure}

We first recall relevant results on vertices and face numbers.

\begin{theorem}[\cite{Stanley}]\label{thm:stanley}
The vertices of $\Ocal(P)$ are the points $\rho(I)$ for the ideals $I$ of $P$, and the vertices of $\Ccal(P)$ are the points $\rho(A)$ for the antichains $A$ of $P$.
\end{theorem}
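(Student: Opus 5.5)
We prove the two halves separately; both follow the same pattern. First we show that every listed point is a vertex. Then we show the polytope is the convex hull of the listed points, which forces every vertex to be one of them.

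\emph{Order polytope.} For an ideal $I$, the point $\rho(I)$ lies in $\Ocal(P)$. Its coordinates are $0/1$, and if $a_i\preceq a_j$ with $a_j\in I$ then $a_i\in I$, so $x_i\ge x_j$. Since $\rho(I)$ is a $0/1$ point of a subset of $[0,1]^d$, and $0/1$ points are vertices of the cube, it is a vertex of $\Ocal(P)$. Conversely, a $0/1$ point of $\Ocal(P)$ is $\rho(I)$ for some ideal $I$.

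So it suffices to show $\Ocal(P)$ is the convex hull of its $0/1$ points. Take $x\in\Ocal(P)$ and list its distinct coordinate values $1\ge t_1>t_2>\dots>t_k\ge 0$, and set $t_{k+1}:=0$ and $t_0:=1$. For each level put $I_r:=\{a_i: x_i\ge t_r\}$. Each $I_r$ is an ideal by the defining inequalities, and $I_1\subseteq\dots\subseteq I_k$. Then
\[
  x=\sum_{r=1}^{k}(t_r-t_{r+1})\,\rho(I_r)+(1-t_1)\,\rho(\emptyset),
\]
which is a convex combination because the coefficients are nonnegative and telescope to $1$. This is the standard ``staircase'' decomposition and is the only real content of this half.

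\emph{Chain polytope.} For an antichain $A$, the point $\rho(A)$ lies in $\Ccal(P)$, since every chain meets $A$ in at most one element. It is a vertex, again because it is a $0/1$ point inside $[0,1]^d$; note $\Ccal(P)\subseteq[0,1]^d$ since singletons are chains. Conversely, the $0/1$ points of $\Ccal(P)$ are exactly the $\rho(A)$ with $A$ an antichain: a two-element chain inside the support would violate its chain inequality.

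The main obstacle is integrality, i.e.\ that $\Ccal(P)$ is the convex hull of these points. I would use Stanley's transfer map $\phi:\Ocal(P)\to\Ccal(P)$,
\[
  \phi(x)_i:=x_i-\max\{x_j: a_j \text{ covers } a_i\},
\]
with the convention $\max\emptyset=0$. The plan is to check three things.
\begin{itemize}
  \item[(i)] $\phi(x)\in\Ccal(P)$. Along a chain, extended to a maximal, hence saturated, chain, the values of $\phi(x)$ are bounded by a telescoping sum, so their total is at most the top-to-bottom drop and hence at most $1$.
  \item[(ii)] $\phi$ is a bijection with inverse $\psi(y)_i:=\max\bigl\{\sum_{a_j\in K}y_j: K \text{ a chain starting at } a_i \text{ going up}\bigr\}$.
  \item[(iii)] $\phi$ is affine on each region of a fixed linear ordering of the coordinates, and there it maps the simplex $\mathrm{conv}\{\rho(I_0),\dots,\rho(I_k)\}$ with $I_0\subset\dots\subset I_k$ to $\mathrm{conv}\{\rho(\max I_r)\}$. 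On a chain of ideals, $\phi(\rho(I))=\rho(\max I)$, and along a chain of ideals the relevant maxima are attained at consistent elements, giving affineness.
\end{itemize}
Combining (ii) with the decomposition of $\Ocal(P)$ into such simplices, obtained from the order-polytope step, shows $\Ccal(P)=\bigcup\mathrm{conv}\{\rho(\max I_r)\}$. This lies inside the convex hull of the antichain points, which finishes the proof.

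Verifying the piecewise-affine claim (iii) is the step I expect to need the most care.
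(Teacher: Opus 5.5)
Your proof is correct. The paper gives no proof of its own and quotes the result from Stanley, and your argument uses exactly the tools of Stanley's paper: the staircase (canonical) decomposition of $\Ocal(P)$ into simplices on chains of ideals, and the transfer map $\phi$, which is linear on each such simplex, sends $\rho(I)$ to $\rho(\max I)$, and satisfies $\phi\circ\psi=\mathrm{id}$ on $\Ccal(P)$. For the final inclusion you only need $\psi(\Ccal(P))\subseteq\Ocal(P)$ and $\phi(\psi(y))=y$, so step (i) and the injectivity part of (ii) are not actually used.
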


\begin{theorem}[\cite{HL-unimod}]\label{thm:X}
The following are equivalent: $\Ocal(P)$ and $\Ccal(P)$ are unimodularly equivalent; $\Ocal(P)$ and $\Ccal(P)$ have the same $f$-vector; $P$ is $X$-free.
\end{theorem}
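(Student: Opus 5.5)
The implication from unimodular equivalence to equality of $f$-vectors is immediate, since a unimodular affine map is a combinatorial isomorphism. It therefore suffices to prove two things: that $X$-freeness implies unimodular equivalence, and that the presence of $X$ forces some face number to differ. Since this paper shows that the edge counts always agree, the face number to use in the second part will be the number of facets.

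\emph{Facets when $X$ appears.} From the defining inequalities, the facets of $\Ocal(P)$ are $x_i\ge 0$ for $a_i$ maximal, $x_i\le 1$ for $a_i$ minimal, and $x_i\ge x_j$ for each cover $a_i\prec a_j$. The facets of $\Ccal(P)$ are $x_i\ge0$ for every $i$, together with one inequality for each maximal chain. I would check irredundancy directly, using the vertex descriptions of Theorem~\ref{thm:stanley}. Writing $\mathrm{mc}(P)$ for the number of maximal chains and $\mathrm{cov}(P)$ for the number of cover relations, the target inequality is
\[
\mathrm{mc}(P)+d \;\ge\; \mathrm{cov}(P)+|\min P|+|\max P|,
\]
with equality if and only if $P$ is $X$-free.

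I would prove this by induction on $d$, deleting a maximal element $m$. The deletion changes the right side by the number of lower covers of $m$, corrected for elements that become maximal. It changes $\mathrm{mc}$ by the number of maximal chains through $m$ minus the number of maximal chains of $P\setminus m$ that newly become maximal. The key local estimate is that the number of maximal chains ending at $m$ is at least the number of its lower covers. Each lower cover supports at least one such chain, and the defect accumulates whenever a chain branches both below and above a common element. I expect this bookkeeping to be the main obstacle. A cleaner alternative I would try first is to write $\mathrm{mc}(P)$ as a sum over pairs (element, upper cover) of the numbers of saturated chains from $\min P$. The excess over the cover count is then $\sum_c (\#\text{down-chains to }c-1)(\#\text{upper covers of }c-1)$, up to boundary terms. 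Each summand is nonnegative and is positive exactly when $c$ has two incomparable elements below it and two above, which is the middle element of an $X$.

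\emph{Unimodular map when $P$ is $X$-free.} Every element $c$ then has either $\down c\setminus\{c\}$ a chain or $\up c\setminus\{c\}$ a chain. Call $c$ of \emph{up type} if $\up c$ is a chain, and of \emph{down type} otherwise. For up-type $c$ with upper cover $c^+$ (if any), set $y_c=x_c-x_{c^+}$. For down-type $c$, use the lower cover $c^-$, so that $\down c\setminus\{c\}$ is the chain through $c^-$, and set the coordinate via the complement $\mathbf 1-x$ symmetrically. Ordering the elements by a linear extension, the matrix of this map is triangular with $\pm1$ on the diagonal, so it is unimodular.

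It remains to check that the map sends ideals onto antichains bijectively, after the appropriate translation. An ideal $I$ is sent to its maximal elements in the up-type region and its complementary minimal elements in the down-type region. $X$-freeness guarantees that these two regions glue consistently, and that every antichain arises from exactly one ideal. Verifying this gluing at the interface between the two types is the step I would examine most carefully. Once it holds, Theorem~\ref{thm:stanley} gives that the map carries the vertex set of $\Ocal(P)$ onto that of $\Ccal(P)$, so it is the required unimodular equivalence.
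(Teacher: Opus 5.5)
The paper does not prove this theorem; it quotes it from \cite{HL-unimod}. The paper's later references to that source for the facet case $k=d-1$ indicate that the cited argument also compares facet numbers, so your three-step plan has the expected shape. The plan works, but two steps are not right as written.

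\emph{The facet count.} Adjoin $\hat0$ and $\hat1$. Let $p(c)$ be the number of saturated chains from $\hat0$ to $c$, and let $\mathrm{uc}(c)$ be the number of upper covers of $c$ in $\hat P$. The identity $\sum_{c\neq\hat1}p(c)\,\mathrm{uc}(c)=\sum_{c\neq\hat0}p(c)$, together with the count of Hasse edges of $\hat P$, gives exactly
\[
\mathrm{mc}(P)+d-\bigl(\mathrm{cov}(P)+|\min P|+|\max P|\bigr)=\sum_{c\in P}\bigl(p(c)-1\bigr)\bigl(\mathrm{uc}(c)-1\bigr),
\]
with no boundary terms. A summand is positive if and only if $c$ has two incomparable elements below it and at least two \emph{upper covers}. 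Two incomparable elements above $c$ do not suffice. The middle element $c$ of an $X$ may have a single upper cover, and then its summand is $0$, so your sentence ``positive exactly when $c$ \ldots\ is the middle element of an $X$'' is false summand by summand.

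To get strict inequality when $a,b\prec c\prec g,h$ is an $X$, replace $c$ by an element $e$ maximal with $c\preceq e$, $e\preceq g$ and $e\preceq h$. Then $e$ has an upper cover below $g$ but not below $h$, and another below $h$ but not below $g$. The elements $a,b$ are still below $e$, so its summand is positive. Conversely, a positive summand exhibits an $X$ centred at $c$, so equality holds exactly when $P$ is $X$-free.

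\emph{The map.} A single linear extension need not make the matrix triangular. Rows of up-type elements reach forward to $c^+$, while rows of down-type elements reach backward to $c^-$. What saves you is that $U:=\{c:\up c\text{ is a chain}\}$ is an up-set and $D:=P\setminus U$ is a down-set. Hence $c^+\in U$ for $c\in U$ and $c^-\in D$ for $c\in D$. The matrix is therefore block diagonal with triangular blocks of determinant $\pm1$; listing $D$ in reversed linear-extension order before $U$ makes it triangular.

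The same fact settles the gluing you were worried about. $X$-freeness is needed only to make $\down c\setminus\{c\}$ a chain for $c\in D$, so that $x_{c^-}-x_c$ (or $1-x_c$ if $c$ is minimal) is the indicator of $c\in\min(P\setminus I)$. Now take $u\in\max I\cap U$ and $v\in\min(P\setminus I)\cap D$. If $u\prec v$, then $u$ would lie in the down-set $D$; if $v\prec u$, then $v$ would lie in the ideal $I$. So every ideal is sent to an antichain. The affine map is injective, and $P$ has as many ideals as antichains. Hence it carries the vertex set of $\Ocal(P)$ onto that of $\Ccal(P)$, and no separate check that every antichain arises is needed.
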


For distributive lattices, the condition of Theorem~\ref{thm:X} can be read off from a much smaller poset. By the fundamental theorem for finite distributive lattices, every finite distributive lattice $L$ is isomorphic to $\mathcal J(Q)$ for a finite poset $Q$, unique up to isomorphism, namely the subposet of join-irreducible elements of $L$ (see \cite[Theorem~3.4.1]{EC1}). The lattice $\mathcal J(Q)$ can have up to $2^{|Q|}$ elements, so it is useful to be able to decide from $Q$ alone whether $X$ appears in $\mathcal J(Q)$.

\begin{proposition}\label{prop:distributive}
Let $Q$ be a finite poset. Then $X$ appears in $\mathcal J(Q)$ if and only if $Q$ has an ideal $I$ such that neither $I$ nor $Q\setminus I$ is a chain. Consequently, $\Ocal(\mathcal J(Q))$ and $\Ccal(\mathcal J(Q))$ are unimodularly equivalent if and only if, for every ideal $I$ of $Q$, at least one of $I$ and $Q\setminus I$ is a chain.
\end{proposition}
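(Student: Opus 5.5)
We prove the two directions of the first claim separately; the "consequently" part then follows from Theorem~\ref{thm:X} applied to $P=\mathcal J(Q)$. Throughout, the elements of $\mathcal J(Q)$ are ideals of $Q$, ordered by inclusion.

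For the direction from the ideal to $X$, suppose $I$ is an ideal of $Q$ such that neither $I$ nor $Q\setminus I$ is a chain. Since $I$ is not a chain, it contains two incomparable elements $p,q$. We take $a:=I\setminus\up\{p\}$ and $b:=I\setminus\up\{q\}$, both of which are ideals. Because $q\notin\up\{p\}$, we have $q\in a\setminus b$, and symmetrically $p\in b\setminus a$. Hence $a$ and $b$ are incomparable, and both are strictly contained in $c:=I$. Dually, $Q\setminus I$ is an up-set that is not a chain, so it contains incomparable elements $r,s$. We take $g:=I\cup\down\{r\}$ and $h:=I\cup\down\{s\}$. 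These are ideals strictly containing $I$, and $r\in g\setminus h$, $s\in h\setminus g$, so $g$ and $h$ are incomparable. Thus $a,b\prec c\prec g,h$ in $\mathcal J(Q)$, and $X$ appears.

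For the converse, suppose $X$ appears, via ideals $A,B\subsetneq C\subsetneq G,H$ with $A,B$ incomparable and $G,H$ incomparable. We claim $I:=C$ works. Since $A\not\subseteq B$ and $B\not\subseteq A$, we can choose $p\in A\setminus B$ and $q\in B\setminus A$, and both lie in $C$. If $p\preceq q$, then $q\in B$ and $B$ being an ideal would give $p\in B$, a contradiction. Symmetrically $q\preceq p$ is impossible, so $p,q$ are incomparable and $C$ is not a chain. Dually, choose $r\in G\setminus H$ and $s\in H\setminus G$. Both lie outside $C$, because $C\subseteq G\cap H$. If $r\preceq s$, then $s\in H$ would force $r\in H$, a contradiction; the case $s\preceq r$ fails in the same way. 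So $r,s$ are incomparable and $Q\setminus C$ is not a chain.

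Everything here is elementary. The only step requiring care is the construction of $a,b$ (and dually $g,h$) that are strictly below $I$ and mutually incomparable. Removing the principal up-sets of $p$ and $q$ handles this cleanly, so I expect no real obstacle. The final sentence then follows by combining the first claim with Theorem~\ref{thm:X}: the two polytopes are unimodularly equivalent exactly when $X$ does not appear in $\mathcal J(Q)$, which by the first claim means that every ideal $I$ of $Q$ has $I$ or $Q\setminus I$ a chain.
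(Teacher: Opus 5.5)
Your proof is correct and follows the paper's route. You take the middle element of $X$ to be the ideal $I$ itself, and you check, via $I\setminus\up\{p\}$, $I\cup\down\{r\}$ and elements of $A\setminus B$ and $G\setminus H$, that the ideals strictly below (resp.\ above) $I$ contain an incomparable pair exactly when $I$ (resp.\ $Q\setminus I$) is not a chain; the paper asserts this in one line, and the final statement then follows from Theorem~\ref{thm:X} just as you say.
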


\begin{proof}
$X$ appears in a poset if and only if some element has two incomparable elements strictly below it and two incomparable elements strictly above it. In $\mathcal J(Q)$, the elements strictly below $I$ are the proper ideals of $I$, and these form a chain if and only if $I$ is a chain; the elements strictly above $I$ are the sets $I\cup K$ with $K$ a nonempty ideal of $Q\setminus I$, and these form a chain if and only if $Q\setminus I$ is a chain. The second statement follows from Theorem~\ref{thm:X}.
\end{proof}

For example, if $A_n$ denotes an $n$-element antichain, then the Boolean lattice $\mathcal J(A_n)$ is $X$-free if and only if $n\le3$.

We now turn to edges. We write $[u,v]$ for the line segment joining points $u,v\in\RR^d$, and $\|v\|$ for the Euclidean norm. An \emph{edge} of a polytope is a face of dimension~$1$; it is a segment $[u,v]$ joining two vertices $u\neq v$, which are then called \emph{adjacent}. Because every vertex of $\Ocal(P)$ or $\Ccal(P)$ is of the form $\rho(W)$, every edge of these polytopes has the form $[\rho(W),\rho(W')]$, and determining the edges amounts to deciding which pairs of vertices are adjacent. The vertices and edges of a polytope form its \emph{graph}; the \emph{distance} between two vertices is the least number of edges in a path joining them in this graph, and the \emph{diameter} of the polytope is the largest such distance. A \emph{direction} of an edge $[u,v]$ is any nonzero multiple of $v-u$, and two edges are \emph{parallel} if they have a common direction. By the \emph{number of edge directions} of a polytope we mean its number of parallel classes of edges, and the \emph{multiplicity} of an edge direction is the size of the corresponding class.

\begin{theorem}[{\cite[Theorem~1.1]{IT}}; see also \cite{HL-cutting}]\label{thm:HL}
Let $I\neq J$ be ideals of $P$, and let $A\neq B$ be antichains of $P$. Then $[\rho(I),\rho(J)]$ is an edge of $\Ocal(P)$ if and only if, after possibly interchanging $I$ and $J$,
\[
  (\mathrm E_{\Ocal})\qquad I\subseteq J \ \text{ and } \ J\setminus I \text{ is connected};
\]
and $[\rho(A),\rho(B)]$ is an edge of $\Ccal(P)$ if and only if
\[
  (\mathrm E_{\Ccal})\qquad A\triangle B \text{ is connected}.
\]
\end{theorem}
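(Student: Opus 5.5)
The plan is to obtain both characterizations from the standard adjacency criterion for $0/1$ polytopes: two vertices $u,v$ of a polytope $\mathcal Q$ are adjacent if and only if the midpoint $\tfrac12(u+v)$ has no other representation as a convex combination of vertices of $\mathcal Q$. Equivalently, there are no vertices $u',v'$ with $\{u',v'\}\neq\{u,v\}$ and $u'+v'=u+v$, provided we also rule out representations using more than two vertices. For both polytopes I would instead use a cleaner route. I would exhibit a supporting linear functional when the condition holds. When it fails, I would give an explicit decomposition $u+v=u'+v'$ with $u',v'$ vertices different from $u,v$. By Theorem~\ref{thm:stanley}, all vertices have the form $\rho(I)$ or $\rho(A)$.

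\emph{Necessity.} Take the chain polytope first. Suppose $A\triangle B$ is disconnected, and split it as $C_1\sqcup C_2$ with no element of $C_1$ comparable to any element of $C_2$. Set $A'=(A\cap B)\cup(A\cap C_1)\cup(B\cap C_2)$ and $B'=(A\cap B)\cup(B\cap C_1)\cup(A\cap C_2)$. These are antichains, because the only new pairs join an element of $C_1$ to an element of $C_2$, and such pairs are incomparable. They satisfy $\rho(A')+\rho(B')=\rho(A)+\rho(B)$ and $\{A',B'\}\neq\{A,B\}$, since both $C_i$ are nonempty. So the midpoint of $[\rho(A),\rho(B)]$ lies on two distinct segments and the pair is not an edge.

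For the order polytope, suppose $I,J$ are incomparable. Then $I\cap J$ and $I\cup J$ are ideals with $\rho(I\cap J)+\rho(I\cup J)=\rho(I)+\rho(J)$, so the pair is not an edge. If $I\subsetneq J$ with $J\setminus I=C_1\sqcup C_2$ disconnected as above, then $I\cup C_1$ and $I\cup C_2$ are ideals. Indeed, if $b\preceq a\in C_1$ and $b\notin I$, then $b\in J\setminus I$, and $b$ is comparable to $a$, so $b\in C_1$. These two ideals again give the same midpoint.

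\emph{Sufficiency.} For $\Ocal(P)$ with $I\subseteq J$ and $J\setminus I$ connected, I would use the objective $c$ with $c_i=+1$ on $I$, $c_i=-1$ outside $J$, and $c_i=0$ on $J\setminus I$, plus a small perturbation to be designed. A cleaner approach is to show that the face $F$ of $\Ocal(P)$ cut out by the tight constraints $x_i=1$ ($a_i\in I$), $x_i=0$ ($a_i\notin J$), and $x_i=x_j$ for comparable $a_i,a_j\in J\setminus I$ is one-dimensional. On $F$ the equalities force all coordinates on the connected set $J\setminus I$ to be equal, because connectedness propagates equalities along comparabilities. So $F$ is the segment $\{\rho(I)+t\rho(J\setminus I):0\le t\le1\}$. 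One must check that these equalities are genuinely valid inequalities of $\Ocal(P)$ holding with equality at both endpoints. They are: $x_i\ge x_j$ for $a_i\preceq a_j$ is tight whenever both coordinates are equal. Hence $F$ is an edge.

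For $\Ccal(P)$, the analogous face uses $x_i=0$ for $a_i\notin A\cup B$, together with chain constraints $x_i+x_j=1$ for comparable pairs $a_i\in A\setminus B$, $a_j\in B\setminus A$. These constraints are tight at both $\rho(A)$ and $\rho(B)$. However, $\sum_K x\le1$ is a facet inequality only for maximal chains. The two-element sum is still a valid inequality implied by $x\ge0$ and any chain containing both elements. Moreover, it is a face-defining inequality of $\Ccal(P)$, since it is valid and tight at some vertex. On the resulting face, connectedness of $A\triangle B$ forces $x=\rho(A\cap B)+t\rho(A\setminus B)+(1-t)\rho(B\setminus A)$. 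This needs care: the $x_i$ on $A\cap B$ must also be pinned to $1$. I would pin them using a maximal chain constraint through each $a\in A\cap B$ together with zero coordinates on the rest of that chain. That works because any chain through $a$ meets $A\cup B$ only in $a$, as $a$ is comparable to no other element of $A\cup B$.

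The main obstacle is this pinning step in the chain-polytope sufficiency argument, namely checking that the tight constraints at both endpoints cut the polytope down to exactly the segment. I expect it to go through by the chain argument just described.
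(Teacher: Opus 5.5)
Your argument is correct, but it is not the paper's route. The paper gives no proof of this theorem. It cites the order-polytope part from Ikebe and Tamura \cite{IT}, who derive it from the Hausmann--Korte colouring criterion for adjacency on $0/1$-polytopes \cite{HK}. It cites the chain-polytope part as the case $G=G(P)$ of Chv\'atal's theorem on stable-set polytopes \cite{Chvatal}.

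You instead prove both parts directly. Non-adjacency comes from exhibiting a second pair of vertices with the same midpoint: $I\cap J,\,I\cup J$, or a redistribution of two mutually incomparable pieces of the difference set. Adjacency comes from showing that the face cut out by the defining inequalities tight at both endpoints is, by connectivity, exactly the segment. This is short and self-contained and uses only the inequality descriptions of $\Ocal(P)$ and $\Ccal(P)$. The cited route instead places the result inside a general adjacency criterion for $0/1$-polytopes.

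Your chain-polytope argument in fact uses only $x\ge0$, $x_a\le1$ and $x_i+x_j\le1$ for comparable pairs. With adjacency in place of comparability, these are valid for the stable-set polytope of every graph, and your necessity construction works verbatim there. So your argument also yields Chv\'atal's theorem in the generality the paper invokes in Section~\ref{sec:comparability}.

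Two simplifications are available. First, the pinning step you flag as the main obstacle is immediate. Since $\{a\}$ is itself a chain, $x_a\le1$ is one of the defining inequalities of $\Ccal(P)$, and it is tight at $\rho(A)$ and $\rho(B)$ for $a\in A\cap B$. Likewise, the two-element chain inequality is literally a defining inequality, so your discussion of facets is unnecessary. Second, the hedged midpoint criterion in your first paragraph is never used, since sufficiency is handled by the face argument. That argument also covers the case $|A\triangle B|=1$, where there are no cross constraints and the single free coordinate ranges over $[0,1]$.
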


The first part of Theorem~\ref{thm:HL} was derived by \citet{IT} from the colouring criterion for adjacency on $0/1$-polytopes of \citet{HK} (see also \cite{Hausmann}), and was found again by \citet{HL-cutting}. The second part is the special case $G=G(P)$ of the theorem of \citet{Chvatal} that two stable sets $A,B$ of a graph $G$ give adjacent vertices of its stable-set polytope if and only if $A\triangle B$ induces a connected subgraph of $G$; this theorem also follows from the colouring criterion (see \cite{Hausmann,IT}).

The graphs of $\Ocal(P)$ and $\Ccal(P)$ have the same number of edges (Theorem~\ref{thm:HLSS} below), but not necessarily the same diameter. If $P$ is $X$-free, the graphs are isomorphic by Theorem~\ref{thm:X}, but when $X$ appears in $P$, either graph can have the larger diameter; we return to diameters in Section~\ref{sec:diam}.

\begin{example}\label{ex:diam}
If $P$ consists of $X$ together with elements $p,q$ with covers $a\prec p\prec g$ and $a\prec q\prec h$, then $\operatorname{diam}\Ocal(P)=2<3=\operatorname{diam}\Ccal(P)$; indeed, no element is comparable to each of $b,p,q$, so $\emptyset$ and $\{b,p,q\}$ are at distance $3$ in the graph of $\Ccal(P)$. If $P$ consists of $X$ together with elements $p,q,r,s$ with covers $a\prec p\prec g$, $q\prec h$, $q\prec r$, $b\prec r$ and $s\prec r$, then $\operatorname{diam}\Ocal(P)=4>3=\operatorname{diam}\Ccal(P)$. We verify these values in Proposition~\ref{prop:ex5}. Moreover, the difference can be arbitrarily large: because $\Ocal(P+Q)=\Ocal(P)\times\Ocal(Q)$ and $\Ccal(P+Q)=\Ccal(P)\times\Ccal(Q)$ for the disjoint union $P+Q$, and the graph of a product of polytopes is the Cartesian product of their graphs, both diameters are additive over disjoint unions, so the disjoint union of $k$ copies of the first poset above has $\operatorname{diam}\Ocal=2k$ and $\operatorname{diam}\Ccal=3k$, and that of $k$ copies of the second has $\operatorname{diam}\Ocal=4k$ and $\operatorname{diam}\Ccal=3k$.
\end{example}

\begin{theorem}[\cite{HLSS}]\label{thm:HLSS}
For every finite poset $P$, $\Ocal(P)$ and $\Ccal(P)$ have the same number of edges.
\end{theorem}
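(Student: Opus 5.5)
The plan is to partition both edge sets by a connected convex subset $C\subseteq P$ and to show that corresponding classes have the same size. By Theorem~\ref{thm:HL}, an edge of $\Ocal(P)$ is a pair $I\subsetneq J$ of ideals with $D:=J\setminus I$ connected. Since $D$ is the difference of two ideals, it is convex, so we assign the edge to the class $C=D$. For a fixed nonempty connected convex $C$, the pairs $(I,J)$ with $J\setminus I=C$ are determined by $I$. The ideal $I$ must contain $\down C\setminus C$, must avoid $\up C$, and $I\cup C$ must be an ideal. Because $C$ is convex, $\down C\setminus C$ is an ideal disjoint from $\up C$. Adding it to $C$ gives an ideal exactly when $I$ contains every element below $C$ outside $C$. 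So $I=(\down C\setminus C)\cup K$ where $K$ ranges over the ideals of the induced subposet $C^{\parallel}$, and we must check that every such $K$ works. If $x\in K$ and $y\prec x$, then $y$ is not in $\up C$, since otherwise $x\in\up C$. Hence $y\in\down C\setminus C$ or $y\in C^{\parallel}$, and in the latter case $y\in K$. Thus the class of $C$ in $\Ocal(P)$ has exactly $|\mathcal J(C^{\parallel})|$ edges.

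For $\Ccal(P)$, an edge is an unordered pair $\{A,B\}$ of antichains with $A\triangle B$ connected. We assign it to the class $C:=\cl{A\triangle B}$, which is convex and, as the closure of a connected set, connected. Counting this class splits into two factors. The first counts the unordered pairs $\{A_0,B_0\}$ of disjoint antichains with $\cl{A_0\cup B_0}=C$ and $A_0\cup B_0$ connected. The second counts the common parts $A\cap B=S$, which form an antichain disjoint from $A_0\cup B_0$ and incomparable to all of it. Since everything in $C$ lies between two elements of $A_0\cup B_0$, an element incomparable to $A_0\cup B_0$ must avoid $\up C\cup\down C$, so it lies in $C^{\parallel}$. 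Hence $S$ ranges over the antichains of $C^{\parallel}$, giving a factor $|\mathcal J(C^{\parallel})|$.

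It then remains to show that, for each connected convex $C$, exactly one unordered pair $\{A_0,B_0\}$ of disjoint antichains has union connected with convex closure $C$. Such pairs must satisfy $A_0\cup B_0\supseteq \min C\cup\max C$, because an extremal element of $C$ can be in the closure only if it lies in the set. The union is a union of two antichains, so it contains no three-element chain. I would take $A_0\cup B_0=\min C\cup\max C$ and split it by $A_0=\max C$, $B_0=\min C\setminus\max C$. The difficulty is that if $C$ contains a three-element chain, then not every element of $C$ need lie between elements of $\min C\cup\max C$ in a way forcing uniqueness. Also, the 2-colouring into antichains is determined up to swapping only when the comparability graph on $A_0\cup B_0$ is connected. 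Connectedness of $\min C\cup\max C$ itself needs an argument. Any path in $C$ can be pushed to alternate between minimal and maximal elements by replacing each element $c$ with an element of $\min C$ below it or of $\max C$ above it. Consecutive comparable elements $c\preceq c'$ go to $m\preceq c\preceq c'\preceq M$.

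I expect the uniqueness step to be the main obstacle. I must rule out other antichain pairs, for example ones with some elements of $C$ in the interior, having the same closure and connected union. If uniqueness fails, I would instead compare the full counts class by class. I would show directly that the number of such $\{A_0,B_0\}$ is $1$ by exhibiting a canonical bijection. Alternatively, I would adjust the class assignment for $\Ccal(P)$, using the set $\min(A\triangle B)\cup\max(A\triangle B)$ rather than the closure, until each class has size exactly $|\mathcal J(C^{\parallel})|$. Summing over $C$ then gives $f_1(\Ocal(P))=f_1(\Ccal(P))=\sum_C|\mathcal J(C^{\parallel})|$.
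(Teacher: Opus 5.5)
Your decomposition is the right one. Your count of the $\Ocal(P)$ classes and your factor $|\mathcal J(C^{\parallel})|$ for the $\Ccal(P)$ classes are correct; they are essentially Lemmas~\ref{lem:order} and~\ref{lem:chain}, with your closure classes being the classes $E_{\Ccal}(T)$ regrouped by $\cl T$. There is, however, a genuine gap at exactly the step you flag. You never show that each connected convex $C$ admits only one unordered pair $\{A_0,B_0\}$, and you leave open that this might fail. Granting your existence argument, you have shown only that the class of $C$ in $\Ccal(P)$ has $N_C\,|\mathcal J(C^{\parallel})|$ edges with $N_C\ge1$, which yields $f_1(\Ocal(P))\le f_1(\Ccal(P))$ but not equality. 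Your fallback does not avoid the issue. Because $A\triangle B$ is connected and a union of two antichains, $\min(A\triangle B)\cup\max(A\triangle B)=A\triangle B$, so that reclassification is just by the difference set. Matching those classes with the $\Ocal(P)$ classes needs the same injectivity of $T\mapsto\cl T$.

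The missing argument is short; it is the half $\Phi(\cl T)=T$ of Theorem~\ref{thm:main}(iii). Let $T:=A_0\cup B_0$ be connected with $\cl T=C$. Every $t\in T$ is maximal or minimal in $T$, since otherwise $t$ is the middle of a three-element chain in $T$, two of whose elements would lie in the same antichain $A_0$ or $B_0$. Maximality in $T$ survives passage to the closure: if $t\in\max T$ and $t\prec c\in C$, then $c\preceq t''$ for some $t''\in T$ by definition of $\cl T$, so $t\prec t''$, contradicting $t\in\max T$. Hence $\max T\subseteq\max C$, and likewise $\min T\subseteq\min C$, so $T\subseteq\max C\cup\min C$. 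Together with your reverse inclusion this gives $T=\max C\cup\min C$, so interior elements of $C$ can never occur, whatever chains $C$ contains. Finally, $\{A_0,B_0\}$ is a proper $2$-colouring of the comparability graph of $T$. That graph is connected by the edge condition itself, so the colouring is unique up to swapping the two classes; for $|T|=1$ the only pair is $\{C,\emptyset\}$. Your worry that some element of $C$ might not lie between elements of $\min C\cup\max C$ is also unfounded. Every $c\in C$ satisfies $m\preceq c\preceq M$ for some $m\in\min C$ and $M\in\max C$, and this is what gives $\cl{\max C\cup\min C}=C$ in your existence part.
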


The proof of Theorem~\ref{thm:HLSS} in \cite{HLSS} is given by a bijection between the pairs satisfying $(\mathrm E_{\Ocal})$ and the pairs satisfying $(\mathrm E_{\Ccal})$. Our purpose is to demonstrate that this bijection decomposes along a natural partition of the edges, and to draw consequences of the decomposition.

For an edge $[\rho(I),\rho(J)]$ of $\Ocal(P)$ with $I\subseteq J$, we call $J\setminus I$ its \emph{difference set}; for an edge $[\rho(A),\rho(B)]$ of $\Ccal(P)$, we call $A\triangle B$ its \emph{difference set}. For $S,T\subseteq P$, let $E_{\Ocal}(S)$ be the set of edges $[\rho(I),\rho(J)]$ of $\Ocal(P)$ with $I\subseteq J$ and $J\setminus I=S$, and let $E_{\Ccal}(T)$ be the set of edges $[\rho(A),\rho(B)]$ of $\Ccal(P)$ with $A\triangle B=T$. Further, let
\begin{align*}
  \mathfrak S(P)&:=\{S\subseteq P: S\neq\emptyset,\ S \text{ connected and convex}\},\\
  \mathfrak T(P)&:=\{T\subseteq P: T\neq\emptyset,\ T \text{ connected and containing no three-element chain}\}.
\end{align*}

Our main result is the following refinement of Theorem~\ref{thm:HLSS}.

\begin{theorem}\label{thm:main}
Let $P$ be a finite poset.
\begin{enumerate}[(iii)]
\renewcommand{\theenumi}{\roman{enumi}}\renewcommand{\labelenumi}{\textup{(\theenumi)}}
\item $E_{\Ocal}(S)\neq\emptyset$ if and only if $S\in\mathfrak S(P)$, in which case $|E_{\Ocal}(S)|=|\mathcal J(S^{\parallel})|$.
\item $E_{\Ccal}(T)\neq\emptyset$ if and only if $T\in\mathfrak T(P)$, in which case $|E_{\Ccal}(T)|=|\mathcal J(T^{\parallel})|$.
\item The map $\Phi: S\mapsto \max S\cup\min S$ is a bijection from $\mathfrak S(P)$ to $\mathfrak T(P)$, with inverse $T\mapsto\cl{T}$, and $\Phi(S)^{\parallel}=S^{\parallel}$.
\end{enumerate}
\end{theorem}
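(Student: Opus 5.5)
The plan is to handle each part directly, using Theorem~\ref{thm:HL} to turn adjacency into set conditions and then counting the completions of a fixed difference set.

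\emph{Part (i).} If $[\rho(I),\rho(J)]$ is an edge with $I\subseteq J$, then $S=J\setminus I$ is nonempty and connected by $(\mathrm E_{\Ocal})$. It is also convex, since $a\preceq c\preceq b$ with $a,b\in S$ forces $c\in J$ (as $J$ is an ideal) and $c\notin I$ (as $I$ is an ideal). Because $S\ne\emptyset$, the orientation $I\subseteq J$ is unique, so edges correspond bijectively to such pairs $(I,J)$.

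Conversely, fix $S\in\mathfrak S(P)$. Then $J=I\cup S$ must contain $\down S$, so $I\supseteq\down S\setminus S$. Also $I\cap\up S=\emptyset$, since $x\in I$ with $x\succeq s\in S$ would put $s\in I$. Hence $I=(\down S\setminus S)\cup K$ with $K\subseteq S^{\parallel}$, and $K$ must be an ideal of $S^{\parallel}$. I will check that every such $I$ works:
\begin{itemize}
\item $\down S\setminus S$ is down-closed. If $x\prec y\in\down S\setminus S$ and $x\in S$, then convexity puts $y\in S$, a contradiction.
\item Anything below $k\in S^{\parallel}$ lies in $\down S\setminus S$ or in $S^{\parallel}$. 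It cannot lie in $S$ or in $\up S$, since then $k\in\up S$.
\item $J=I\cup S$ is an ideal because $\down S\subseteq I\cup S$.
\end{itemize}
This gives $|E_{\Ocal}(S)|=|\mathcal J(S^{\parallel})|$.

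\emph{Part (ii).} The sets $A\cap T$ and $B\cap T$ are antichains covering $T$, so $T$ contains no three-element chain. Connectedness comes from $(\mathrm E_{\Ccal})$.

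Conversely, take $T\in\mathfrak T(P)$. Its comparability graph is bipartite, with parts $\min T$ and $T\setminus\min T=\max T\setminus\min T$. Since $T$ is connected, this proper $2$-colouring is unique up to swapping when $|T|\ge2$, and for $|T|=1$ the split $\{T,\emptyset\}$ is forced anyway. So the unordered pair $\{A\setminus B,\,B\setminus A\}$ is determined by $T$.

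It remains to choose $C=A\cap B$. The requirement that $C\cup(A\setminus B)$ and $C\cup(B\setminus A)$ be antichains says exactly that $C$ is an antichain each of whose elements is incomparable to all of $T$, i.e.\ $C$ is an antichain of $T^{\parallel}$. Antichains of $T^{\parallel}$ are equinumerous with its ideals, so $|E_{\Ccal}(T)|=|\mathcal J(T^{\parallel})|$.

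\emph{Part (iii).} For $S\in\mathfrak S(P)$, $\Phi(S)$ has no three-element chain, because any chain in $\max S\cup\min S$ has at most one minimal and one maximal element.

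For connectedness, assign to each $z\in S$ some $m_z\in\min S$ and $M_z\in\max S$ with $m_z\preceq z\preceq M_z$. A comparable step $x\prec y$ in $S$ is then replaced by the walk $m_x\prec M_y\succ m_y$, whose consecutive terms are comparable. Convexity gives $\cl{\Phi(S)}=S$, since every element of $S$ lies between $\min S$ and $\max S$.

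Conversely, for $T\in\mathfrak T(P)$, the set $\cl T$ is convex, and it is connected because each added element is comparable to elements of $T$. Moreover $\max\cl T=\max T$ and $\min\cl T=\min T$. Finally $T=\max T\cup\min T$, since an element of $T$ that is neither minimal nor maximal would give a three-element chain. Thus the two maps are mutually inverse.

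For the equality of $\parallel$-sets, use $\up S=\up\min S$ and $\down S=\down\max S$, together with $\min S\cup\max S\subseteq\Phi(S)\subseteq S$. Then $\up S\cup\down S=\up\Phi(S)\cup\down\Phi(S)$, so $\Phi(S)^{\parallel}=S^{\parallel}$.

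The step I expect to need the most care is the uniqueness of the bipartition in (ii) and its interaction with the count; the connectivity of $\Phi(S)$ in (iii) is the other nontrivial point.
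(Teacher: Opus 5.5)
Your proposal is correct and follows essentially the same route as the paper. For (i) you write $I=(\down S\setminus S)\sqcup K$ with $K$ an ideal of $S^{\parallel}$. For (ii) you use the unique split of $T$ into two antichains together with a free antichain $C=A\cap B$ of $T^{\parallel}$. For (iii) you replace each step of a path in $S$ by a min/max walk, and use $\max\cl T=\max T$, $\min\cl T=\min T$, and $\up S\cup\down S=\up\Phi(S)\cup\down\Phi(S)$. The points you leave implicit are routine: $\max\cl T=\max T$ holds because every element of $\cl T$ lies below an element of $\max T$, and each endpoint $t\in\Phi(S)$ either equals $m_t$ or is comparable to it.
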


Theorem~\ref{thm:main} yields Theorem~\ref{thm:HLSS}: every edge has exactly one difference set, so by (i) and (ii) the numbers of edges of $\Ocal(P)$ and $\Ccal(P)$ are $\sum_{S\in\mathfrak S(P)}|\mathcal J(S^{\parallel})|$ and $\sum_{T\in\mathfrak T(P)}|\mathcal J(T^{\parallel})|$, and by (iii) the bijection $\Phi$ matches these two sums term by term. This gives a formula for the common number of edges:
\begin{equation}\label{eq:formula}
  f_1(\Ocal(P)) \;=\; \sum_{S\in\mathfrak S(P)} |\mathcal J(S^{\parallel})| \;=\; f_1(\Ccal(P)).
\end{equation}

Theorem~\ref{thm:main} is the edge analogue of a parametrization of triangular $2$-faces obtained recently in \cite{FLM}. There, the triangles of $\Ocal(P)$ and of $\Ccal(P)$ are indexed by a connected convex subposet $Q$ of height at least two, an antichain $W$ of elements incomparable to $Q$, and a local object inside $Q$, so that each count is a sum over $Q$ of $|\mathcal J(Q^{\parallel})|$ times a local invariant of $Q$; the local invariant for $\Ocal(P)$ is at most that for $\Ccal(P)$, with strict inequality for some $Q$ exactly when $X$ appears in $P$. Theorem~\ref{thm:main} establishes that, for edges, the situation is simpler: the local factor equals $1$ on both sides. The bijection between the square $2$-faces of $\Ocal(P)$ and $\Ccal(P)$ constructed in the proof of \cite[Theorem~5.11]{FLM} likewise acts class by class: it applies $\Phi$ to the two difference sets of a square face of $\Ocal(P)$. Several of the elementary facts used in Section~2 (for example, that a convex set is connected if and only if its maximal and minimal elements form a connected set) also appear in \cite{FLM}; we include short proofs to keep the paper self-contained.

The rest of the paper is organized as follows. Besides the background above, this section contains two observations of our own: a characterization of the distributive lattices $L$ for which $\Ocal(L)$ and $\Ccal(L)$ are unimodularly equivalent (Proposition~\ref{prop:distributive}), and examples demonstrating that, despite Theorem~\ref{thm:HLSS}, the graphs of $\Ocal(P)$ and $\Ccal(P)$ can have different diameters, with either one the larger, by an arbitrary amount (Example~\ref{ex:diam}). Section~2 contains the proof of Theorem~\ref{thm:main}. In Section~3 we establish that the bijection obtained from Theorem~\ref{thm:main} is precisely the bijection of Hibi, Li, Sahara and Shikama~\cite{HLSS}. That bijection is given by an explicit formula; Theorem~\ref{thm:agree} reveals it to be a disjoint union of simple class-by-class bijections, which explains why it is a bijection and how it acts on edge directions and lengths.

In Section~4 we record consequences. The first group (Sections~\ref{sec:directions} and \ref{sec:opt}) uses the class-by-class form of Theorem~\ref{thm:main}, not merely the equality of totals: $\Ocal(P)$ and $\Ccal(P)$ have the same number of edge directions, with matching multiplicities (Theorem~\ref{thm:dirs}(i)); for every coordinate subspace, $\Ccal(P)$ has at least as many edges parallel to it as $\Ocal(P)$ (Theorem~\ref{thm:dirs}(ii)); and the edge lengths of $\Ccal(P)$ are dominated by those of $\Ocal(P)$ (Theorem~\ref{thm:dirs}(iii)); strict inequality occurs in the last two comparisons if and only if $P$ contains a three-element chain. Moreover, $f_1(\Ocal(Q))\le f_1(\Ocal(P))$ for every induced subposet $Q$ of $P$, convex or not (Corollary~\ref{cor:subposet}). We then interpret these results for the maximum-weight ideal and maximum-weight antichain problems and their convex multicriteria extensions, via the edge-direction framework of \citet{OR} and the edge complexity of \citet{MO} (Corollary~\ref{cor:edgecomplexity}). The second group (Sections~\ref{sec:comparability}--\ref{sec:fences}) uses (\ref{eq:formula}): a formula for $f_1$ in terms of the comparability graph, which extends to the stable-set polytope of an arbitrary graph, a simplification for posets of width at most $2$, a recursion for series-parallel posets with closed forms for layered posets and for posets with a minimum and a maximum, and closed forms for zigzag and crown posets. Finally, in Section~\ref{sec:diam} we turn to the graphs of the two polytopes: distances are bounded by numbers of components, and both diameters lie in a common interval determined by the connected components of the poset and their widths (Theorem~\ref{prop:widthdiam}); for every $m$, the two polytopes have equally many ordered pairs of vertices whose symmetric difference has $m$ components (Theorem~\ref{prop:components}), which for $m=1$ is Theorem~\ref{thm:HLSS}; and the diameters coincide for ordinal sums, and hence for series-parallel posets and for posets with a minimum or a maximum (Proposition~\ref{prop:spdiam}), and both equal the width for zigzag and crown posets (Theorem~\ref{prop:zigzagdiam}).

In Section~5 we discuss the conjecture of \citet{HL-unimod} that $f_k(\Ocal(P))\le f_k(\Ccal(P))$ for all $k$, whose inequality has recently been proved in \cite{Fourier}, and the part of it that remains open.

%=====================================================================
\section{Proof of Theorem~\ref{thm:main}}
%=====================================================================

We treat parts (i), (ii) and (iii) in turn, each preceded by the facts that its proof uses.

\subsection*{Part (i)}
We first record two simple facts. Let $I\subseteq J$ be ideals. If $a\preceq c\preceq b$ with $a,b\in J\setminus I$, then $c\in J$ because $c\preceq b$, and $c\notin I$ because $a\preceq c$ and $a\notin I$. Therefore
\begin{equation}\label{eq:diffconvex}
  J\setminus I \text{ is convex.}
\end{equation}
Next, let $S$ be convex. Then $\down S\cap\up S=S$ by convexity, and $D_S:=\down S\setminus S$ is an ideal: if $b\preceq a\in D_S\,$, then $b\in\down S$, and $b\in S$ would give $b\preceq a\preceq s$ for some $s\in S$, whence $a\in S$ by convexity. Therefore
\begin{equation}\label{eq:decomp}
  P \;=\; D_S\ \sqcup\ S\ \sqcup\ S^{\parallel}\ \sqcup\ (\up S\setminus S).
\end{equation}

\begin{lemma}\label{lem:order}
Let $S\subseteq P$ be nonempty and convex. Then $I'\mapsto (D_S\sqcup I',\ D_S\sqcup I'\sqcup S)$ is a bijection from the ideals of $S^{\parallel}$ to the pairs $(I,J)$ of ideals of $P$ with $I\subseteq J$ and $J\setminus I=S$.
\end{lemma}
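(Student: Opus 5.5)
We prove the lemma by checking three things: the map is well defined, it is injective, and it is surjective. Throughout we use the decomposition (\ref{eq:decomp}) and the fact, recorded above it, that $D_S$ is an ideal of $P$.

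\emph{Well defined.} Let $I'$ be an ideal of $S^{\parallel}$ and put $I:=D_S\sqcup I'$. We show $I$ is an ideal of $P$. Take $b\preceq a\in I$. If $a\in D_S$, then $b\in D_S$ because $D_S$ is an ideal. If $a\in I'$, consider where $b$ lies in (\ref{eq:decomp}).
\begin{itemize}
\item $b\in S$ is impossible, since $a$ would then lie in $\up S$.
\item $b\in\up S\setminus S$ is impossible for the same reason.
\item $b\in D_S$ gives $b\in I$ directly.
\item $b\in S^{\parallel}$ gives $b\in I'$, because $I'$ is an ideal of the induced subposet $S^{\parallel}$.
\end{itemize}
Next put $J:=I\sqcup S$ and take $b\preceq a\in J$. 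Only the case $a\in S$ needs work, and then $b\in\down S=D_S\sqcup S\subseteq J$. The decomposition (\ref{eq:decomp}) gives $I\subseteq J$ and $J\setminus I=S$.

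\emph{Injective.} This is immediate, since $I'=I\cap S^{\parallel}$ is recovered from $I$.

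\emph{Surjective.} Let $(I,J)$ be ideals of $P$ with $I\subseteq J$ and $J\setminus I=S$. This is the main step: we must show $I\cap S=\emptyset$, $D_S\subseteq I$, and $I\cap(\up S\setminus S)=\emptyset$.
\begin{itemize}
\item $I\cap S=\emptyset$ holds because $S=J\setminus I$.
\item For $D_S\subseteq I$: if $b\in D_S$, then $b\prec s$ for some $s\in S\subseteq J$, so $b\in J$. Since $b\notin S=J\setminus I$, we get $b\in I$.
\item For the last condition: if $a\in I$ and $a\succeq s\in S$, then $s\in I$ because $I$ is an ideal, contradicting $I\cap S=\emptyset$.
\end{itemize}
Hence $I=D_S\sqcup I'$ with $I'=I\cap S^{\parallel}$. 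This $I'$ is an ideal of $S^{\parallel}$, being the restriction of an ideal of $P$ to an induced subposet. Finally $J=I\sqcup S$, since $I\subseteq J$ and $J\setminus I=S$.

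The only delicate point is the identification of $I$ on each of the four blocks of (\ref{eq:decomp}). Convexity of $S$ enters only through that decomposition, namely through $\down S\cap\up S=S$ and the fact that $D_S$ is an ideal.
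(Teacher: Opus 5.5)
Your proof is correct and follows the paper's argument essentially step for step. You use the same case analysis over the blocks of (\ref{eq:decomp}) to show that $D_S\sqcup I'$ and $D_S\sqcup I'\sqcup S$ are ideals, and the same converse establishing $D_S\subseteq I$ and $I\cap\up S=\emptyset$, with $I'=I\cap S^{\parallel}$ recovered from $I$.
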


\begin{proof}
Let $I'$ be an ideal of $S^{\parallel}$, let $I:=D_S\sqcup I'$, and let $b\preceq a\in I$. If $a\in D_S\,$, then $b\in D_S\,$. If $a\in I'$, then $b\notin\up S$ (otherwise $a\in\up S$); so either $b\in\down S\setminus S=D_S\,$, or $b\in S^{\parallel}$, in which case $b\in I'$. Hence $I$ is an ideal. If $a\in S$, then $b\in\down S=D_S\sqcup S$; hence $J:=I\sqcup S$ is an ideal.

Conversely, let $I\subseteq J$ be ideals with $J\setminus I=S$. Every element of $D_S$ lies below an element of $S\subseteq J$, so $D_S\subseteq J\setminus S=I$. If $s\preceq a\in I$ with $s\in S$, then $s\in I$, which is impossible; so $I\cap\up S=\emptyset$. By (\ref{eq:decomp}), $I=D_S\sqcup I'$ with $I':=I\cap S^{\parallel}$, an ideal of $S^{\parallel}$. Because $I'$ is recovered from $(I,J)$, the map is a bijection.
\end{proof}

\begin{proof}[Proof of Theorem~\ref{thm:main}(i)]
If $e\in E_{\Ocal}(S)$, then $S$ is nonempty, connected by $(\mathrm E_{\Ocal})$, and convex by (\ref{eq:diffconvex}). Conversely, if $S\in\mathfrak S(P)$, then by Theorem~\ref{thm:HL} the edges in $E_{\Ocal}(S)$ are in one-to-one correspondence with the pairs of Lemma~\ref{lem:order}, of which there are $|\mathcal J(S^{\parallel})|\ge 1$.
\end{proof}

\subsection*{Part (ii)}
Let $T\in\mathfrak T(P)$. If $|T|\ge 2$, then $T=\max T\sqcup\min T$: an element neither maximal nor minimal in $T$ would be the middle element of a three-element chain, and an element both maximal and minimal in $T$ would be incomparable to all other elements of $T$, contradicting connectivity. Moreover, if $t\prec t'$ in $T$, then $t\in\min T$ and $t'\in\max T$. A partition of $T$ into two antichains is a proper $2$-coloring of the (connected) comparability graph of $T$, and such a coloring is unique up to interchanging colors. We define
\[
  (T^+,T^-):=\begin{cases}(\max T,\min T), & |T|\ge 2;\\ (T,\emptyset), & |T|=1.\end{cases}
\]
In either case, $\{T^+,T^-\}$ is the unique unordered partition of $T$ into two antichains.

\begin{lemma}\label{lem:chain}
Let $T\in\mathfrak T(P)$. Then $C\mapsto\{C\sqcup T^+,\ C\sqcup T^-\}$ is a bijection from the antichains of $T^{\parallel}$ to the unordered pairs $\{A,B\}$ of distinct antichains of $P$ with $A\triangle B=T$.
\end{lemma}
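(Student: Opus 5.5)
We prove Lemma~\ref{lem:chain} by checking that the map is well defined and then constructing its inverse, in the same spirit as Lemma~\ref{lem:order}.

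\emph{Well defined.} Let $C$ be an antichain of $T^{\parallel}$. Then $C$ is an antichain of $P$, and $T^+$ and $T^-$ are antichains by the discussion preceding the lemma. Each element of $C$ lies in $T^{\parallel}$, so it is incomparable to every element of $T$. Hence $C\sqcup T^+$ and $C\sqcup T^-$ are antichains of $P$, and the unions are disjoint because $T\cap T^{\parallel}=\emptyset$. Their symmetric difference is $T^+\triangle T^-=T$, since $T^+$ and $T^-$ are disjoint with union $T$. The two antichains are distinct because $T\neq\emptyset$.

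\emph{Inverse.} Let $A\neq B$ be antichains of $P$ with $A\triangle B=T$, and put $C:=A\cap B$. The sets $A\setminus B$ and $B\setminus A$ are antichains partitioning $T$. By the uniqueness of the unordered partition of $T$ into two antichains, $\{A\setminus B,B\setminus A\}=\{T^+,T^-\}$. In the case $|T|=1$, one of the two parts is empty and the other is $T$, which again matches $\{T,\emptyset\}$.

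It remains to show $C\subseteq T^{\parallel}$. Take $c\in C$ and $t\in T$. Then $t$ lies in $A$ or in $B$, and $c$ lies in both. Since $c\notin T$, we have $c\neq t$, so $c$ and $t$ are distinct elements of a common antichain and hence incomparable. Therefore $c\in T^{\parallel}$, and $C$ is an antichain of $T^{\parallel}$. This gives $\{A,B\}=\{C\sqcup T^+,\,C\sqcup T^-\}$, so the map is surjective.

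\emph{Injective.} The antichain $C$ is recovered from the pair as $A\cap B$, because $T^+\cap T^-=\emptyset$ and $C\cap T=\emptyset$.

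The only step with any content is identifying $\{A\setminus B,B\setminus A\}$ with $\{T^+,T^-\}$. This rests on the uniqueness of the bipartition of the connected comparability graph of $T$, together with the separate check of the degenerate case $|T|=1$. Both were established just before the lemma, so no real obstacle remains.
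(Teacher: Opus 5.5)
Your proof is correct and follows essentially the same route as the paper: you check well-definedness using that elements of $T^{\parallel}$ are incomparable to all of $T$, you identify $\{A\setminus B,B\setminus A\}$ with $\{T^+,T^-\}$ by uniqueness of the two-antichain partition, and you recover $C$ as $A\cap B$. Your separate check of the case $|T|=1$ and your remark that $c\neq t$ because $c\notin A\triangle B$ make explicit two small points that the paper leaves implicit.
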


\begin{proof}
If $C$ is an antichain of $T^{\parallel}$, then $C\sqcup T^+$ and $C\sqcup T^-$ are antichains, because every element of $C$ is incomparable to every element of $T$; their symmetric difference is $T\neq\emptyset$. Conversely, let $\{A,B\}$ be such a pair. Then $A\setminus B$ and $B\setminus A$ partition $T$ into two antichains, so $\{A\setminus B,B\setminus A\}=\{T^+,T^-\}$. Let $C:=A\cap B$. Every $c\in C$ and every $t\in T$ lie together in $A$ or together in $B$, so they are incomparable; hence $C$ is an antichain of $T^{\parallel}$. Because $C$ is recovered as $A\cap B$, the map is a bijection.
\end{proof}

\begin{proof}[Proof of Theorem~\ref{thm:main}(ii)]
If $e=[\rho(A),\rho(B)]\in E_{\Ccal}(T)$, then $T$ is nonempty and connected by $(\mathrm E_{\Ccal})$, and a chain in $T$ meets each of the antichains $A\setminus B$ and $B\setminus A$ at most once, so $T$ contains no three-element chain. Conversely, if $T\in\mathfrak T(P)$, then by Theorem~\ref{thm:HL} and Lemma~\ref{lem:chain}, $|E_{\Ccal}(T)|$ is the number of antichains of $T^{\parallel}$, namely $|\mathcal J(T^{\parallel})|\ge 1$.
\end{proof}

\subsection*{Part (iii)}
We use the following observation repeatedly: if $W\subseteq P$ is nonempty and $b\in W$, then there exist $m\in\min W$ and $M\in\max W$ with $m\preceq b\preceq M$, and we may take $m=b$ (resp., $M=b$) when $b\in\min W$ (resp., $b\in\max W$).

\begin{proof}[Proof of Theorem~\ref{thm:main}(iii)]
Let $S\in\mathfrak S(P)$, and let $T:=\Phi(S)$. Being the union of two antichains, $T$ contains no three-element chain. To see that $T$ is connected, let $t,t'\in T$, and let $t=s_0\,,s_1\,,\dots,s_k=t'$ be elements of $S$ with $s_{j-1}$ and $s_j$ comparable for $1\le j\le k$. Choose $m_j\in\min S$ and $M_j\in\max S$ with $m_j\preceq s_j\preceq M_j\,$, $t\in\{m_0\,,M_0\}$ and $t'\in\{m_k\,,M_k\}$. Then $m_j\preceq M_j$ for all $j$; moreover, $s_{j-1}\preceq s_j$ implies $m_{j-1}\preceq M_j\,$, and $s_{j-1}\succeq s_j$ implies $M_{j-1}\succeq m_j\,$. Hence $t$ and $t'$ are joined within $T$, and $T\in\mathfrak T(P)$.

Conversely, let $T\in\mathfrak T(P)$, and let $S:=\cl{T}$. Clearly $S$ is convex, and $S$ is connected because each of its elements is comparable to an element of the connected set $T\subseteq S$. We claim that $\max S=\max T$. If $a\in\max T$ and $a\prec c\in S$, then $c\preceq t''$ for some $t''\in\max T$, so $a\prec t''$, contradicting that $\max T$ is an antichain; thus $\max T\subseteq\max S$. If $c\in\max S$, then $c\preceq t''$ for some $t''\in\max T\subseteq S$, so $c=t''$; thus $\max S\subseteq\max T$. Similarly, $\min S=\min T$. Therefore $\Phi(\cl{T})=\max T\cup\min T=T$. In the other direction, for $S\in\mathfrak S(P)$, convexity gives $\cl{\Phi(S)}\subseteq S$, and the observation above gives $S\subseteq\cl{\Phi(S)}$.

Finally, $\Phi(S)\subseteq S$ gives $S^{\parallel}\subseteq\Phi(S)^{\parallel}$. If $c$ is comparable to some $s\in S$, say $c\succeq s$ (resp., $c\preceq s$), then $c\succeq m$ for some $m\in\min S$ (resp., $c\preceq M$ for some $M\in\max S$), so $c\notin\Phi(S)^{\parallel}$. Hence $\Phi(S)^{\parallel}=S^{\parallel}$.
\end{proof}

\begin{example}\label{ex:X}
For the poset $X$ of Figure~\ref{fig:X}, a set $S\in\mathfrak S(X)$ either contains $c$, in which case it is $\{c\}$ together with an arbitrary subset of $\{a,b,g,h\}$ and $S^{\parallel}=\emptyset$; or it is one of $\{a\},\{b\},\{g\},\{h\}$, because every comparable pair of elements other than $c$ has $c$ strictly between them, and then $S^{\parallel}$ is a single point. Thus $|\mathfrak S(X)|=20$, and (\ref{eq:formula}) gives $f_1(\Ocal(X))=f_1(\Ccal(X))=16\cdot1+4\cdot2=24$, in agreement with the degree sequences of the two graphs in \cite[Example~1.2]{HLSS}, each of which sums to $48$ (see also Corollary~\ref{cor:layered}).
\end{example}

%=====================================================================
\section{The bijection of Hibi, Li, Sahara and Shikama}
%=====================================================================

Hibi, Li, Sahara and Shikama~\cite{HLSS} proved Theorem~\ref{thm:HLSS} by writing down an explicit map from the edges of $\Ocal(P)$ to the edges of $\Ccal(P)$ and verifying directly that it is a bijection. The formula, recalled below, does not reveal why the map is a bijection, nor how it acts on the directions and lengths of edges. In this section we establish that this map is exactly the bijection $\Psi$ obtained from Theorem~\ref{thm:main}. Thus the decomposition of Theorem~\ref{thm:main} is not a competing construction, but the structure underlying the bijection of \cite{HLSS}.

For $S\in\mathfrak S(P)$, let $\minc S:=\min S$ if $|S|\ge2$, and $\minc S:=\emptyset$ if $|S|=1$. Note that $(\Phi(S)^+,\Phi(S)^-)=(\max S,\minc S)$. Combining Lemmas~\ref{lem:order} and \ref{lem:chain} with the bijection $I'\mapsto\max I'$ between the ideals and the antichains of $S^{\parallel}$, we obtain the bijection
\[
  \Psi:\ \bigl[\rho(D_S\sqcup I'),\ \rho(D_S\sqcup I'\sqcup S)\bigr]
  \;\longmapsto\;
  \bigl[\rho(\max I'\cup\minc S),\ \rho(\max I'\cup\max S)\bigr]
\]
from the edges of $\Ocal(P)$ to the edges of $\Ccal(P)$, which carries $E_{\Ocal}(S)$ onto $E_{\Ccal}(\Phi(S))$. In \cite{HLSS}, an edge $[\rho(I),\rho(J)]$ of $\Ocal(P)$ with $I\subseteq J$ is mapped to $[\rho(A),\rho(B)]$, where
\[
  A:=\max J,\qquad B:=\minc(J\setminus I)\cup(\max I\cap\max J).
\]

\begin{theorem}\label{thm:agree}
The bijection $\Psi$ coincides with the bijection of Hibi, Li, Sahara and Shikama~\cite{HLSS}.
\end{theorem}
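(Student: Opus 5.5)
We need to show that for every edge of $\Ocal(P)$ the two maps give the same unordered pair of antichains. Fix $S\in\mathfrak S(P)$ and an ideal $I'$ of $S^{\parallel}$, and write $I=D_S\sqcup I'$ and $J=I\sqcup S$ as in Lemma~\ref{lem:order}; every edge of $\Ocal(P)$ arises this way exactly once. Then $J\setminus I=S$, so the \cite{HLSS} formula gives $A=\max J$ and $B=\minc S\cup(\max I\cap\max J)$. It suffices to prove the two identities
\[
  \max J=\max I'\cup\max S,\qquad \max I\cap\max J=\max I'.
\]
Given these, $(A,B)$ equals $(\max I'\cup\max S,\ \max I'\cup\minc S)$, which is exactly the image under $\Psi$.

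For the first identity, I would use the decomposition (\ref{eq:decomp}), which gives $J=D_S\sqcup S\sqcup I'$. No element of $D_S$ is maximal in $J$, since each lies strictly below an element of $S\subseteq J$. An element $s\in S$ that is maximal in $S$ is also maximal in $J$. Indeed, anything above $s$ lies in $\up S$, and $J\cap\up S=S$ because $I\cap\up S=\emptyset$ (shown in the proof of Lemma~\ref{lem:order}). For $a\in I'$, an element $c\in J$ with $c\succ a$ cannot lie in $D_S$: we would get $a\preceq c\preceq s$ with $s\in S$, so $a\in\down S$, which is impossible. It also cannot lie in $S$, since then $a\in\down S$. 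Hence $c\in I'$, and so $a$ is maximal in $J$ if and only if it is maximal in $I'$. This yields $\max J=\max I'\cup\max S$.

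For the second identity, the same argument inside $I=D_S\sqcup I'$ shows that elements of $I'$ are maximal in $I$ exactly when they are maximal in $I'$. Intersecting with $\max J$, which contains no element of $D_S$, gives $\max I\cap\max J=\max I'$. The maximal elements of $D_S$ that are maximal in $I$ drop out here, because they are not maximal in $J$.

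The main point needing care is the first identity, specifically the claim that elements of $I'$ are not dominated within $J$ by elements of $D_S$ or $S$. This reduces to the facts $I'\cap\down S=\emptyset$ and $I\cap\up S=\emptyset$. A secondary check is the case $|S|=1$, where $\minc S=\emptyset$ on both sides; this matches the convention $(T^+,T^-)=(T,\emptyset)$ used to define $\Psi$. Finally, since both maps send $E_{\Ocal}(S)$ into $E_{\Ccal}(\Phi(S))$ and agree edge by edge, they coincide.
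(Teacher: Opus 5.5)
Your proposal is correct and follows the paper's proof. You reduce to the identities $\max J=\max I'\cup\max S$ and $\max I\cap\max J=\max I'$, and verify them by checking which elements of $D_S$, $S$ and $I'$ can lie strictly below another element of $J$. The only cosmetic difference is in the second identity: you compute $\max I$ directly, whereas the paper observes that $\max J\cap I=\max I'$ and that an element of $I$ which is maximal in $J$ is also maximal in $I$.
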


\begin{proof}
Let $I=D_S\sqcup I'$ and $J=I\sqcup S$. It suffices to establish that $\max J=\max I'\cup\max S$ and that $\max I\cap\max J=\max I'$. No element of $D_S$ is maximal in $J$. Let $s\in S$ and $s\prec c\in J$. Then $c\notin I'$, because $I'\subseteq S^{\parallel}$; and $c\notin D_S\,$, because $c\preceq s'\in S$ would give $c\in S$ by convexity. Hence $c\in S$, and so $s\in\max J$ if and only if $s\in\max S$. Let $a\in I'$ and $a\prec c\in J$. Then $c\notin S\cup D_S\,$, because otherwise $a$ would be comparable to an element of $S$; hence $c\in I'$, and so $a\in\max J$ if and only if $a\in\max I'$. Therefore $\max J=\max I'\cup\max S$, and $\max J\cap I=\max I'$. Because an element of $I$ that is maximal in $J$ is maximal in $I$, we have $\max I\cap\max J=\max J\cap I=\max I'$.
\end{proof}

Note that $\Psi$ maps the vertex $\rho(D_S\sqcup I')$ to $\rho(\max I'\cup\minc S)$, which depends on $S$ and not only on the vertex. This is consistent with \cite[Theorem~3.1]{HLSS}: when $X$ appears in $P$, the graphs of $\Ocal(P)$ and $\Ccal(P)$ have different degree sequences, so no bijection between the vertex sets preserves adjacency.

Theorem~\ref{thm:agree} explains the bijection of Hibi, Li, Sahara and Shikama~\cite{HLSS}. It is a disjoint union of bijections $E_{\Ocal}(S)\to E_{\Ccal}(\Phi(S))$, one for each $S\in\mathfrak S(P)$, and on each class it is simply the bijection $I'\mapsto\max I'$ from the ideals of $S^{\parallel}$ to its antichains; this is why it is a bijection. Consequently, the bijection of \cite{HLSS} maps each parallel class of edges of $\Ocal(P)$ onto a parallel class of edges of $\Ccal(P)$ and never increases the length of an edge (Theorem~\ref{thm:dirs} below); as noted above, it is not induced by a map on vertices. None of the later results depends on Theorem~\ref{thm:agree}, but it demonstrates that they are properties of the bijection of \cite{HLSS} itself.

%=====================================================================
\section{Consequences}
%=====================================================================

This section collects consequences of Theorem~\ref{thm:main} and of formula (\ref{eq:formula}). We have the following highlights.
\begin{itemize}
\item Matched classes of edges are parallel classes, so the two polytopes have equally many edge directions, with matching multiplicities; the chain polytope has at least as many edges parallel to every coordinate subspace, and its edge lengths are dominated by those of the order polytope (Theorem~\ref{thm:dirs}).
\item The number of edges of the order polytope does not decrease under passing to an arbitrary induced subposet, although the order polytope of a subposet need not be a face (Corollary~\ref{cor:subposet}).
\item Both diameters lie in a common interval determined by the components of the poset and their widths, and they coincide for ordinal sums, series-parallel posets, and zigzag and crown posets (Theorem~\ref{prop:widthdiam}(ii), Proposition~\ref{prop:spdiam} and Theorem~\ref{prop:zigzagdiam}).
\item For every $m$, the two polytopes have equally many pairs of vertices whose symmetric difference has $m$ components; $m=1$ gives Theorem~\ref{thm:HLSS} (Theorem~\ref{prop:components}).
\end{itemize}
The section is organized as follows. Section~\ref{sec:directions} treats edge directions, edge lengths and subposets, and Section~\ref{sec:opt} interprets the results for linear and convex combinatorial optimization over ideals and antichains, including edge complexity (Corollary~\ref{cor:edgecomplexity}). Sections~\ref{sec:comparability}--\ref{sec:fences} use (\ref{eq:formula}) to count edges: in terms of the comparability graph, for posets of width at most $2$, for series-parallel, layered and bounded posets, and for zigzag and crown posets. Section~\ref{sec:diam} turns from edges to the graphs of the two polytopes: distances and diameters (Theorems~\ref{prop:widthdiam} and \ref{prop:zigzagdiam}, Proposition~\ref{prop:spdiam}) and the component refinement of Theorem~\ref{thm:HLSS} (Theorem~\ref{prop:components}).

\subsection{Edge directions and edge lengths}\label{sec:directions}
Theorem~\ref{thm:HLSS} compares only the total numbers of edges. Because the bijection of Theorem~\ref{thm:main} respects difference sets, it also compares how the edges lie in $\RR^d$. For an ideal $I'$ of $S^{\parallel}$, the edge of $\Ocal(P)$ in $E_{\Ocal}(S)$ joining $\rho(D_S\sqcup I')$ and $\rho(D_S\sqcup I'\sqcup S)$ has direction $\rho(S)$; for an antichain $C$ of $T^{\parallel}$, the edge of $\Ccal(P)$ in $E_{\Ccal}(T)$ joining $\rho(C\sqcup T^-)$ and $\rho(C\sqcup T^+)$ has direction $\rho(T^+)-\rho(T^-)$. In each case the direction depends only on the difference set, and the difference set is recovered from the direction as its support. Hence the parallel classes of edges of $\Ocal(P)$ are exactly the sets $E_{\Ocal}(S)$, $S\in\mathfrak S(P)$, and those of $\Ccal(P)$ are exactly the sets $E_{\Ccal}(T)$, $T\in\mathfrak T(P)$. Theorem~\ref{thm:main} therefore gives part (i) of the following theorem. For $U\subseteq P$, we say that an edge is \emph{parallel to} $\RR^U$ if its direction lies in the coordinate subspace spanned by $\{\mathbf e_i: a_i\in U\}$; equivalently, its difference set is contained in $U$. The squared Euclidean length of an edge with difference set $S$ (for $\Ocal(P)$) or $T$ (for $\Ccal(P)$) is $|S|$ (resp., $|T|$). Because $\Phi(S)\subseteq S$, with equality if and only if $S$ contains no three-element chain, the bijection $\Psi$ never increases length.

\begin{theorem}\label{thm:dirs}
Let $P$ be a finite poset.
\begin{enumerate}[(iii)]
\renewcommand{\theenumi}{\roman{enumi}}\renewcommand{\labelenumi}{\textup{(\theenumi)}}
\item The polytopes $\Ocal(P)$ and $\Ccal(P)$ have the same number, $|\mathfrak S(P)|$, of edge directions, and $\Phi$ matches the parallel classes of edges of $\Ocal(P)$ with those of $\Ccal(P)$ so that matched classes have the same size, $|\mathcal J(S^{\parallel})|$.
\item For every $U\subseteq P$, the number of edges of $\Ocal(P)$ parallel to $\RR^U$ is at most the number of edges of $\Ccal(P)$ parallel to $\RR^U$. Equality holds for every $U$ if and only if $P$ contains no three-element chain.
\item For every $t\ge1$, the number of edges of $\Ccal(P)$ of length at least $\sqrt t$ is at most the number of edges of $\Ocal(P)$ of length at least $\sqrt t$. In particular,
\[
  \sum_{e} \|e\|^2 \;=\; \sum_{S\in\mathfrak S(P)} |\mathcal J(S^{\parallel})|\,\bigl|\max S\cup\min S\bigr|
  \;\le\; \sum_{S\in\mathfrak S(P)} |\mathcal J(S^{\parallel})|\,|S| \;=\; \sum_{e'} \|e'\|^2,
\]
where $e$ ranges over the edges of $\Ccal(P)$, $e'$ over the edges of $\Ocal(P)$, and $\|e\|$ denotes the length of $e$. More generally, $\sum_e g(\|e\|)\le\sum_{e'}g(\|e'\|)$ for every nondecreasing function $g$; if $g$ is strictly increasing, the inequality is strict if and only if $P$ contains a three-element chain.
\end{enumerate}
\end{theorem}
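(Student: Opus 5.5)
Part (i) follows from the discussion preceding the statement. The direction of an edge in $E_{\Ocal}(S)$ is $\rho(S)$, and the direction of an edge in $E_{\Ccal}(T)$ is $\rho(T^+)-\rho(T^-)$. In both cases the difference set is the support of the direction, so the parallel classes are exactly the nonempty classes $E_{\Ocal}(S)$ and $E_{\Ccal}(T)$. We then invoke Theorem~\ref{thm:main}. Parts (i) and (ii) identify these classes as those indexed by $\mathfrak S(P)$ and $\mathfrak T(P)$, with sizes $|\mathcal J(S^{\parallel})|$ and $|\mathcal J(T^{\parallel})|$. Part (iii) supplies the bijection $\Phi$ together with $\Phi(S)^{\parallel}=S^{\parallel}$, so matched classes have equal size and the number of directions is $|\mathfrak S(P)|$.

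For (ii), fix $U$. The number of $\Ocal$-edges parallel to $\RR^U$ is $\sum_{S\in\mathfrak S(P),\,S\subseteq U}|\mathcal J(S^{\parallel})|$, and the $\Ccal$-count is the same sum over $T\in\mathfrak T(P)$ with $T\subseteq U$. Since $\Phi(S)\subseteq S$, the condition $S\subseteq U$ implies $\Phi(S)\subseteq U$. Hence $\Phi$ injects the indexing set of the first sum into that of the second, with equal summands, and the inequality follows. For the equality statement:
\begin{itemize}
\item If $P$ has no three-element chain, then every convex set is its own $\Phi$-image and $\mathfrak S(P)=\mathfrak T(P)$, so the sums agree.
\item If $a\prec b\prec c$ is a three-element chain, take $S:=\cl{\{a,c\}}=P[a,c]$. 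It is connected and convex, and it contains $b$. Its image is $\Phi(S)=\{a,c\}$. Choose $U:=\{a,c\}$. Then $\Phi(S)$ is counted on the $\Ccal$ side, while $S\not\subseteq U$, so the only $S'$ with $\Phi(S')=\{a,c\}$ is not counted on the $\Ocal$ side. The summand $|\mathcal J(\{a,c\}^{\parallel})|\ge1$ therefore makes the inequality strict.
\end{itemize}

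For (iii), the squared length of an edge is $|S|$ on the $\Ocal$ side and $|\Phi(S)|$ on the $\Ccal$ side. Since $\Phi(S)\subseteq S$, the bijection $\Psi$ does not increase length. Any length-nonincreasing bijection between finite multisets gives the tail inequality: the $\Ccal$-edges of length at least $\sqrt t$ map under $\Psi^{-1}$ injectively into the $\Ocal$-edges of length at least $\sqrt t$. Likewise, for nondecreasing $g$ we have $g(\|\Psi(e')\|)\le g(\|e'\|)$ termwise, and summing over all $\Ocal$-edges $e'$ gives $\sum_e g(\|e\|)\le\sum_{e'}g(\|e'\|)$. The displayed formula is the case $g(x)=x^2$, grouped by classes via Theorem~\ref{thm:main}.

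For strictness when $g$ is strictly increasing, the sum inequality is strict if and only if some term $|\Phi(S)|<|S|$ occurs for a class with $|\mathcal J(S^{\parallel})|\ge1$, which always holds. Such an $S$ exists if and only if some $S\in\mathfrak S(P)$ contains a three-element chain. If $P$ has a chain $a\prec b\prec c$, then $P[a,c]$ is such an $S$; conversely, any such $S$ contains a three-element chain of $P$. The only delicate point in the whole argument is the converse direction of the equality criterion in (ii), namely choosing $U$ to separate $\Phi(S)$ from $S$; the choice $U=\Phi(P[a,c])$ handles it.
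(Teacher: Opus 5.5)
Your proposal is correct and follows essentially the same route as the paper. In (ii) you use the injection $S\mapsto\Phi(S)$ on $\{S\in\mathfrak S(P): S\subseteq U\}$ and, for strictness, the witness $U=\{a,c\}$ whose unique $\Phi$-preimage $P[a,c]$ is not contained in $U$; in (iii) you use the length-nonincreasing bijection $\Psi$ and, for strictness, the interval $P[a,c]$ with $|\Phi(P[a,c])|<|P[a,c]|$.
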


\begin{proof}
Part (i) follows from Theorem~\ref{thm:main} and the description of the parallel classes above.

For (ii), the two numbers are $\sum_{S\in\mathfrak S(P),\,S\subseteq U}|\mathcal J(S^{\parallel})|$ and $\sum_{T\in\mathfrak T(P),\,T\subseteq U}|\mathcal J(T^{\parallel})|$. If $S\subseteq U$, then $\Phi(S)\subseteq S\subseteq U$; so $\Phi$ maps $\{S\in\mathfrak S(P): S\subseteq U\}$ injectively into $\{T\in\mathfrak T(P): T\subseteq U\}$, and $|\mathcal J(\Phi(S)^{\parallel})|=|\mathcal J(S^{\parallel})|$. This proves the inequality. If $P$ contains no three-element chain, then every $S\in\mathfrak S(P)$ satisfies $\Phi(S)=S$, and equality holds for every $U$. If $a\prec b\prec c$ in $P$, let $U:=\{a,c\}$ and $T:=U\in\mathfrak T(P)$. Then $b\in\cl{T}\setminus U$, so $T$ is not in the image of $\{S\in\mathfrak S(P): S\subseteq U\}$, and because $|\mathcal J(T^{\parallel})|\ge1$ the inequality is strict.

For (iii), the inequalities follow from the fact that $\Psi$ is a bijection that does not increase length. Let $g$ be strictly increasing. If $P$ contains no three-element chain, then $\Phi$ is the identity and $\Psi$ preserves length. If $a\prec b\prec c$ in $P$, then the interval $S:=P[a,c]$ belongs to $\mathfrak S(P)$ and contains $b\notin\max S\cup\min S$, so $|\Phi(S)|<|S|$, and the $|\mathcal J(S^{\parallel})|\ge1$ edges in $E_{\Ocal}(S)$ are mapped to strictly shorter edges.
\end{proof}

The one-sided statement that the edge directions of $\Ocal(P)$ are exactly the vectors $\rho(S)$, $S\in\mathfrak S(P)$, follows directly from Theorem~\ref{thm:HL}, and the analogous statement for $\Ccal(P)$ follows from Theorem~\ref{thm:HL} and Lemma~\ref{lem:chain}; what is new in Theorem~\ref{thm:dirs}(i) is the matching of the two sets of directions, with equal multiplicities. In the language of linear optimization, the edge directions of a polytope are the possible pivot directions of the simplex method over it, so the two polytopes offer equally many pivot directions, with matching multiplicities. Parts (ii) and (iii) exploit the inclusion $\Phi(S)\subseteq S$, which says that $\Phi$ can only shrink supports. By (ii), although the two polytopes have the same number of edges, the chain polytope has at least as many edges as the order polytope parallel to each coordinate subspace; in the example $U=\{a,c\}$ with $a\prec b\prec c$, the edges of $\Ccal(P)$ parallel to $\RR^U$ with difference set $\{a,c\}$ correspond, under $\Psi^{-1}$, to edges of $\Ocal(P)$ whose difference set is the interval $P[a,c]$, which is not contained in $U$.

For the poset $X$ (Example~\ref{ex:X}), both polytopes have $20$ edge directions, and the sums of the squared lengths of their $24$ edges are $56$ for $\Ocal(X)$ and $47$ for $\Ccal(X)$.

In augmentation and pivoting algorithms over $0/1$-polytopes, the $\ell_1$-length of an edge, which equals its squared Euclidean length, is the number of variables that change in the corresponding step; in these terms, Theorem~\ref{thm:dirs}(iii) says that the steps available over antichains are no longer than the matched steps over ideals, and Theorem~\ref{thm:dirs}(ii) says that, for every set $U$ of variables allowed to change, there are at least as many steps over antichains as over ideals.

\begin{remark}\label{rem:limitations}
Theorem~\ref{thm:dirs} has two limitations. First, parts (ii) and (iii) concern the given coordinates rather than the combinatorial type: for a chain $P$, the polytopes $\Ocal(P)$ and $\Ccal(P)$ are both simplices, yet the inequalities are strict once $|P|\ge3$. Second, when $P$ has no three-element chain, all three parts of Theorem~\ref{thm:dirs} can also be seen directly, because $\Ocal(P)$ is then mapped onto $\Ccal(P)$ by an isometry of the cube, namely $x_i\mapsto 1-x_i$ on the coordinates of the minimal elements of $P$, which carries $\rho(I)$ to $\rho(I\triangle\min P)$. Theorem~\ref{thm:dirs} thus has content only in the presence of three-element chains, where no such map exists. Applied to the face of $\Ccal(P)$ formed by the antichains $C$ with $A\cap B\subseteq C\subseteq A\cup B$, for two antichains $A,B$, which is a copy of the chain polytope of the subposet $A\triangle B$ of height at most $2$, this isometry gives the case of comparability graphs of a theorem of \citet[Theorem~3.4]{IT}, which states that such faces of stable-set polytopes are equivalent to order polytopes.
\end{remark}

The class-by-class form of Theorem~\ref{thm:main} also transfers monotonicity under passing to subposets from the chain polytope, where it is clear, to the order polytope. For an induced subposet $Q$ of $P$ and $W\subseteq Q$, we write $W^{\parallel_Q}$ and $W^{\parallel_P}$ for the sets of elements of $Q$, respectively of $P$, that are incomparable to every element of $W$.

\begin{corollary}\label{cor:subposet}
Let $Q$ be an induced subposet of $P$. Then $S\mapsto\cl{\Phi_Q(S)}$, with the convex closure taken in $P$, is an injective map from $\mathfrak S(Q)$ to $\mathfrak S(P)$, and $|\mathcal J(S^{\parallel_Q})|\le|\mathcal J(\cl{\Phi_Q(S)}^{\parallel_P})|$ for every $S\in\mathfrak S(Q)$. Consequently, $|\mathfrak S(Q)|\le|\mathfrak S(P)|$ and $f_1(\Ocal(Q))\le f_1(\Ocal(P))$, with strict inequalities if $Q\neq P$.
\end{corollary}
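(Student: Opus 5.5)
The plan is to route everything through the chain-polytope side, where passing to subposets is transparent, and then come back with $\Phi_P^{-1}$, which is convex closure in $P$. Let $S\in\mathfrak S(Q)$ and set $T:=\Phi_Q(S)=\max_Q S\cup\min_Q S$. By Theorem~\ref{thm:main}(iii) applied to $Q$, $T\in\mathfrak T(Q)$. First I check that $T\in\mathfrak T(P)$. Connectivity and the absence of three-element chains depend only on the order induced on $T$, and $Q$ is an induced subposet, so both properties carry over. By Theorem~\ref{thm:main}(iii) applied to $P$, $\cl{T}$ (closure in $P$) then lies in $\mathfrak S(P)$, and $\Phi_P(\cl T)=T$.

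Injectivity follows by composing injective maps. The map $S\mapsto\Phi_Q(S)$ is injective because $\Phi_Q$ is a bijection. The inclusion $\mathfrak T(Q)\hookrightarrow\mathfrak T(P)$ is injective. The map $T\mapsto\cl T$ is injective on $\mathfrak T(P)$, since its inverse is $\Phi_P$.

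For the inequality of ideal counts, Theorem~\ref{thm:main}(iii) gives $S^{\parallel_Q}=T^{\parallel_Q}$ in $Q$ and $\cl T^{\parallel_P}=T^{\parallel_P}$ in $P$. Moreover $T^{\parallel_Q}=T^{\parallel_P}\cap Q$, and it is an induced subposet of $T^{\parallel_P}$. Every antichain of an induced subposet is an antichain of the larger poset, so $|\mathcal J(T^{\parallel_Q})|\le|\mathcal J(T^{\parallel_P})|$, because ideals and antichains are equinumerous. Summing over $S\in\mathfrak S(Q)$ and applying (\ref{eq:formula}) to both $Q$ and $P$ gives $|\mathfrak S(Q)|\le|\mathfrak S(P)|$ and $f_1(\Ocal(Q))\le f_1(\Ocal(P))$. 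The second sum injects term by term into the first, and every term is at least $1$.

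Strictness is where some care is needed; the comparison is over strict subposets, so $Q\subsetneq P$. Pick $p\in P\setminus Q$. I claim that the singleton $\{p\}\in\mathfrak S(P)$ is not in the image. Suppose $\cl{T}=\{p\}$. Then $T=\Phi_P(\{p\})=\{p\}$. But $T\subseteq Q$, a contradiction. So the injection $\mathfrak S(Q)\to\mathfrak S(P)$ misses $\{p\}$, which gives $|\mathfrak S(Q)|<|\mathfrak S(P)|$. The term $|\mathcal J(\{p\}^{\parallel_P})|\ge1$ is missing from the comparison, so $f_1(\Ocal(Q))<f_1(\Ocal(P))$. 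I do not expect a real obstacle. The only subtle point is that convex closure is taken in $P$, so $\cl T$ may contain elements outside $Q$. This is harmless, because we only use $\Phi_P(\cl T)=T$ and $\cl T^{\parallel_P}=T^{\parallel_P}$.
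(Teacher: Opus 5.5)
Your proposal is correct and follows the paper's proof essentially step for step: you pass to $T=\Phi_Q(S)\in\mathfrak T(P)$, use Theorem~\ref{thm:main}(iii) in both $Q$ and $P$ to get $S^{\parallel_Q}=T^{\parallel_P}\cap Q$ and $\cl T^{\parallel_P}=T^{\parallel_P}$, compare antichain counts, and obtain strictness from the singleton $\{p\}$ with $p\in P\setminus Q$. (A small point: $|\mathfrak S(Q)|\le|\mathfrak S(P)|$ follows directly from injectivity and does not need (\ref{eq:formula}).)
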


\begin{proof}
Let $S\in\mathfrak S(Q)$ and $T:=\Phi_Q(S)\in\mathfrak T(Q)$. Comparability in $Q$ is comparability in $P$, so $T$ is connected in $P$ and contains no three-element chain of $P$; thus $T\in\mathfrak T(P)$, and $\cl T\in\mathfrak S(P)$ with $\Phi_P(\cl T)=T$ by Theorem~\ref{thm:main}(iii). As $\Phi_Q$ is injective, so is $S\mapsto\cl T$. By Theorem~\ref{thm:main}(iii), applied in $Q$ and in $P$, $S^{\parallel_Q}=T^{\parallel_Q}=T^{\parallel_P}\cap Q$ and $\cl T^{\parallel_P}=T^{\parallel_P}$. The antichains of the induced subposet $T^{\parallel_P}\cap Q$ are antichains of $T^{\parallel_P}$, which gives the inequality of class sizes, and the two consequences follow from (\ref{eq:formula}). If $p\in P\setminus Q$, then $\{p\}\in\mathfrak S(P)$ is not in the image, because $\Phi_P(\{p\})=\{p\}\not\subseteq Q$; this gives strictness.
\end{proof}

On the chain side, Corollary~\ref{cor:subposet} is clear, because $\Ccal(Q)$ is the face of $\Ccal(P)$ on which the coordinates outside $Q$ vanish. On the order side it is not: $\Ocal(Q)$ is a face of $\Ocal(P)$ only when $Q$ is convex, and a connected convex subset of $Q$ need not be convex in $P$.

\subsection{Ramifications for combinatorial optimization}\label{sec:opt}
\begin{figure}[ht]
\centering
\begin{tikzpicture}[x={(2.2cm,-0.45cm)},y={(-1.1cm,0.75cm)},z={(0cm,2.2cm)},
  vtx/.style={circle,fill=black,inner sep=1.3pt},
  cA/.style={very thick,black},
  cB/.style={very thick,blue!70!black,dashed},
  cC/.style={very thick,red!70!black,dotted},
  cD/.style={very thick,green!50!black,dash dot},
  cE/.style={very thick,orange!85!black,densely dashed},
  cF/.style={line width=2.2pt,violet}]
  % axes
  \draw[->,gray!50] (0,0,0)--(1.35,0,0) node[gray,font=\scriptsize,right]{$x_1$};\draw[->,gray!50] (0,0,0)--(0,1.35,0) node[gray,font=\scriptsize,left]{$x_2$};\draw[->,gray!50] (0,0,0)--(0,0,1.3) node[gray,font=\scriptsize,above]{$x_3$};
  % order polytope: ideals 0,1,12,123
  \draw[cA] (0,0,0)--(1,0,0);
  \draw[cB] (1,0,0)--(1,1,0);
  \draw[cC] (1,1,0)--(1,1,1);
  \draw[cD] (0,0,0)--(1,1,0);
  \draw[cE] (1,0,0)--(1,1,1);
  \draw[cF] (0,0,0)--(1,1,1);
  \node[vtx,label=below:{\scriptsize$\emptyset$}] at (0,0,0) {};
  \node[vtx,label=below:{\scriptsize$\{1\}$}] at (1,0,0) {};
  \node[vtx,label=right:{\scriptsize$\{1,2\}$}] at (1,1,0) {};
  \node[vtx,label=right:{\scriptsize$\{1,2,3\}$}] at (1,1,1) {};
  \node at (0.55,0,-0.4) {$\Ocal(P)$};
\end{tikzpicture}
\qquad
\begin{tikzpicture}[x={(2.2cm,-0.45cm)},y={(-1.1cm,0.75cm)},z={(0cm,2.2cm)},
  vtx/.style={circle,fill=black,inner sep=1.3pt},
  cA/.style={very thick,black},
  cB/.style={very thick,blue!70!black,dashed},
  cC/.style={very thick,red!70!black,dotted},
  cD/.style={very thick,green!50!black,dash dot},
  cE/.style={very thick,orange!85!black,densely dashed},
  cF/.style={line width=2.2pt,violet}]
  \draw[->,gray!50] (0,0,0)--(1.35,0,0) node[gray,font=\scriptsize,right]{$x_1$};\draw[->,gray!50] (0,0,0)--(0,1.35,0) node[gray,font=\scriptsize,left]{$x_2$};\draw[->,gray!50] (0,0,0)--(0,0,1.3) node[gray,font=\scriptsize,above]{$x_3$};
  % chain polytope: antichains 0,1,2,3
  \draw[cA] (0,0,0)--(1,0,0);
  \draw[cB] (0,0,0)--(0,1,0);
  \draw[cC] (0,0,0)--(0,0,1);
  \draw[cD] (1,0,0)--(0,1,0);
  \draw[cE] (0,1,0)--(0,0,1);
  \draw[cF] (1,0,0)--(0,0,1);
  \node[vtx,label=below:{\scriptsize$\emptyset$}] at (0,0,0) {};
  \node[vtx,label=below:{\scriptsize$\{1\}$}] at (1,0,0) {};
  \node[vtx,label=left:{\scriptsize$\{2\}$}] at (0,1,0) {};
  \node[vtx,label=left:{\scriptsize$\{3\}$}] at (0,0,1) {};
  \node at (0.55,0,-0.4) {$\Ccal(P)$};
\end{tikzpicture}

\medskip
\small
\begin{tabular}{@{}lllcc@{}}
\toprule
$S\in\mathfrak S(P)$ & edge of $\Ocal(P)$ & edge of $\Ccal(P)$ & $\|e'\|^2$ & $\|e\|^2$\\
\midrule
$\{1\}$ & $\emptyset$ -- $\{1\}$ & $\emptyset$ -- $\{1\}$ & 1 & 1\\
$\{2\}$ & $\{1\}$ -- $\{1,2\}$ & $\emptyset$ -- $\{2\}$ & 1 & 1\\
$\{3\}$ & $\{1,2\}$ -- $\{1,2,3\}$ & $\emptyset$ -- $\{3\}$ & 1 & 1\\
$\{1,2\}$ & $\emptyset$ -- $\{1,2\}$ & $\{1\}$ -- $\{2\}$ & 2 & 2\\
$\{2,3\}$ & $\{1\}$ -- $\{1,2,3\}$ & $\{2\}$ -- $\{3\}$ & 2 & 2\\
$\{1,2,3\}$ & $\emptyset$ -- $\{1,2,3\}$ & $\{1\}$ -- $\{3\}$ & 3 & 2\\
\bottomrule
\end{tabular}
\caption{The bijection $\Psi$ for the three-element chain $P=\{a_1\prec a_2\prec a_3\}$, with $a_i$ abbreviated to $i$ in the labels and corresponding to the coordinate $x_i\,$. Vertices are labelled by ideals (left) and antichains (right); matched edges are drawn in the same style. Each polytope is a tetrahedron with six edge directions, each of multiplicity one (Theorem~\ref{thm:dirs}(i)). The long diagonal of $\Ocal(P)$, with difference set $P[1,3]=\{1,2,3\}$, is matched with the shorter edge $[\rho(\{1\}),\rho(\{3\})]$ of $\Ccal(P)$, so the sums of squared lengths are $10>9$ (Theorem~\ref{thm:dirs}(iii)). Exactly two edges of $\Ocal(P)$ but three edges of $\Ccal(P)$ are parallel to the coordinate plane $\RR^{\{1,3\}}$ (Theorem~\ref{thm:dirs}(ii)). The edge complexities are $\operatorname{ec}(\Ocal(P))=3$ and $\operatorname{ec}(\Ccal(P))=2$ (Corollary~\ref{cor:edgecomplexity}).}\label{fig:chain}
\end{figure}

Linear optimization over $\Ocal(P)$ is the maximum-weight ideal problem, equivalently the maximum-weight closure problem, which reduces to a single minimum-cut computation (see \cite{Picard}). Linear optimization over $\Ccal(P)$ is the maximum-weight antichain problem, which is solvable in polynomial time because $\Ccal(P)$ is the stable-set polytope of the perfect graph $G(P)$ (see \cite{GLS}). We record what the preceding results say about these two classical problems and their nonlinear extensions; Figure~\ref{fig:chain} illustrates the smallest poset, the three-element chain, for which the comparisons of Theorem~\ref{thm:dirs}(ii) and (iii) are strict.

\emph{Local optimality.} By Theorem~\ref{thm:HL}, an ideal $I$ is a maximum-weight ideal if and only if no move ``add or remove a set $S\in\mathfrak S(P)$'' leads to a better ideal, and an antichain $A$ is a maximum-weight antichain if and only if no move ``exchange the two layers of a set $T\in\mathfrak T(P)$'' leads to a better antichain; Theorem~\ref{thm:main} adds that the two families of moves are in canonical bijection, and that, for each $S\in\mathfrak S(P)$, the moves along $S$ and along $\Phi(S)$ are available at the same number, $2\,|\mathcal J(S^{\parallel})|$, of vertices.

\emph{Convex combinatorial optimization.} Let $M$ be an integer $r\times P$ matrix and $f:\RR^r\to\RR$ a convex function, and consider the problems
\[
  \max\{f(M\rho(I)):\ I \text{ an ideal of } P\}\quad\text{and}\quad \max\{f(M\rho(A)):\ A \text{ an antichain of } P\}.
\]
Every such problem attains its maximum at a solution whose image is a vertex of the projected polytope $M\Ocal(P)$ (resp., $M\Ccal(P)$). \citet{OR} established that, if a set $E$ of $\mu$ vectors containing a direction of every edge of the underlying $0/1$-polytope is available, then such a problem reduces to linear optimization over the same family for each of at most $2\sum_{k=0}^{r-1}\binom{\mu-1}{k}$ objectives. The reason is that every edge of the projected polytope is parallel to $Mh$ for some $h\in E$, so the \emph{zonotope} $\sum_{h\in E}[-Mh,Mh]$ refines the projected polytope, in the sense that the normal cone of each of its vertices is contained in the normal cone of a vertex of the projected polytope; each of its at most $2\sum_{k=0}^{r-1}\binom{\mu-1}{k}$ vertices supplies one linear objective (see \cite[Lemmas~2.1 and 2.2, and the proof of Theorem~2.3]{MO}). By Theorem~\ref{thm:dirs}(i), the sets
\[
  \{\rho(S):\ S\in\mathfrak S(P)\}\qquad\text{and}\qquad \{\rho(\max S)-\rho(\minc S):\ S\in\mathfrak S(P)\}
\]
are complete sets of edge directions for $\Ocal(P)$ and $\Ccal(P)$, of the same size $\mu=|\mathfrak S(P)|$. Hence the bound on the number of linear counterparts is the same for convex optimization over ideals and over antichains, and both sets of directions are produced, element by element, from the single list $\mathfrak S(P)$. The bound is useful when $|\mathfrak S(P)|$ is small compared with the number $|\mathcal J(P)|$ of feasible solutions. For the zigzag poset $Z_n\,$, with elements $a_1\,,\dots,a_n$ and cover relations $a_1\prec a_2\succ a_3\prec a_4\succ \cdots$ (Section~\ref{sec:fences}), we have $|\mathfrak S(Z_n)|=\binom{n+1}{2}$ (the segments), whereas $|\mathcal J(Z_n)|=F_{n+2}$; so, for fixed $r$, convex $r$-criteria optimization over the ideals, or over the antichains, of $Z_n$ reduces to $O(n^{2(r-1)})$ linear counterparts, although there are exponentially many feasible solutions. For instance, for $n=40$ and two criteria, at most $2\mu=1{,}640$ linear counterparts suffice, whereas $Z_{40}$ has $F_{42}=267{,}914{,}296$ ideals.

\emph{Edge complexity.} \citet{MO} define the \emph{edge complexity} $\operatorname{ec}(\mathcal Q)$ of a lattice polytope $\mathcal Q$ as the least nonnegative integer such that every edge of $\mathcal Q$ is parallel to an integer vector of $\ell_1$-norm at most $\operatorname{ec}(\mathcal Q)$, and they establish that, for fixed $r$, the projection of $\mathcal Q\subseteq\RR^n$ by an integer $r\times n$ matrix $M$ has $O\bigl((\operatorname{ec}(\mathcal Q)\,\|M\|_\infty)^{r(r-1)}\bigr)$ vertices, and that, given a linear-optimization oracle and a bound on $\operatorname{ec}(\mathcal Q)$, convex maximization of $f(Mx)$ over the vertices of $\mathcal Q$ reduces to that many linear counterparts (see \cite[Theorem~2.3 and its proof]{MO}). For a $0/1$-polytope, the difference of the endpoints of an edge is a $\{0,\pm1\}$-vector, which is the shortest integer vector in its direction; hence $\operatorname{ec}(\mathcal Q)$ is the largest cardinality of the support of such a difference.

\begin{corollary}\label{cor:edgecomplexity}
Let $P$ be nonempty. The edge complexity of $\Ocal(P)$ is the largest cardinality of a connected component of $P$. The edge complexity of $\Ccal(P)$ is
\[
  \operatorname{ec}(\Ccal(P))=\max_{S\in\mathfrak S(P)}\bigl|\max S\cup\min S\bigr|\;\le\;\min\{\operatorname{ec}(\Ocal(P)),\,2w(P)\},
\]
and $\operatorname{ec}(\Ccal(P))=\operatorname{ec}(\Ocal(P))$ if and only if some connected component of $P$ of largest cardinality contains no three-element chain.
\end{corollary}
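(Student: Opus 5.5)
The plan is to compute both edge complexities directly from the classification of edge directions in Theorem~\ref{thm:dirs}(i). By the remark just before the corollary, the edge complexity of a $0/1$-polytope is the largest support of a difference of adjacent vertices. For $\Ocal(P)$ these supports are exactly the sets $S\in\mathfrak S(P)$. For $\Ccal(P)$ they are the sets $\Phi(S)=\max S\cup\min S$, $S\in\mathfrak S(P)$; here we use Theorem~\ref{thm:main}(ii),(iii) and the fact that $\Phi$ is onto $\mathfrak T(P)$. Hence
\[
\operatorname{ec}(\Ocal(P))=\max_{S\in\mathfrak S(P)}|S| \quad\text{and}\quad \operatorname{ec}(\Ccal(P))=\max_{S\in\mathfrak S(P)}|\Phi(S)|.
\]

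For the order polytope, every $S\in\mathfrak S(P)$ is connected, so it lies in a single connected component, and $|S|$ is at most the size of a largest component. Conversely, each connected component $K$ is itself convex: if $a\preceq c\preceq b$ with $a,b\in K$, then $c$ is comparable to $a$, so $c\in K$. Also $K\neq\emptyset$, so $K\in\mathfrak S(P)$ and the maximum is attained.

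For the chain polytope, the formula is the display above. The bound $|\Phi(S)|\le|S|$ gives $\operatorname{ec}(\Ccal)\le\operatorname{ec}(\Ocal)$. The bound $|\max S\cup\min S|\le|\max S|+|\min S|\le 2w(P)$ holds because $\max S$ and $\min S$ are antichains.

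For the equality criterion, write $N$ for the largest component size. First suppose some largest component $K$ contains no three-element chain. Then $\Phi(K)=K$, since every element of $K$ is maximal or minimal in $K$, and so $\operatorname{ec}(\Ccal)\ge|K|=N$. Conversely, suppose $\operatorname{ec}(\Ccal)=N$, and pick $S$ with $|\Phi(S)|=N$. Then $N=|\Phi(S)|\le|S|\le|K|\le N$, where $K$ is the component containing $S$. So equality holds throughout, which forces $S=K$ and $\Phi(K)=K$. Thus every element of $K$ is maximal or minimal in $K$, so $K$ contains no three-element chain.

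The step needing the most care is this last one: making sure the chain of equalities forces $S$ to be a whole largest component. This works because $S\subseteq K$ with $|S|=|K|$. The rest is routine bookkeeping from Theorems~\ref{thm:main} and \ref{thm:dirs}.
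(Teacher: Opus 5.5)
Your proposal is correct and follows essentially the same route as the paper. Both arguments read off the edge complexities as $\max_{S\in\mathfrak S(P)}|S|$ and $\max_{S\in\mathfrak S(P)}|\Phi(S)|$ via Theorem~\ref{thm:main}, get the two bounds from $\Phi(S)\subseteq S$ and from $\Phi(S)$ being a union of two antichains, and force the equality case through $|\Phi(S)|\le|S|\le|K|$; the only addition in yours is a short check that connected components are convex, which the paper takes as evident.
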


\begin{proof}
The edges of $\Ocal(P)$ in $E_{\Ocal}(S)$ have endpoint difference $\rho(S)$, of $\ell_1$-norm $|S|$. Every $S\in\mathfrak S(P)$ lies in a connected component of $P$, and every connected component is itself in $\mathfrak S(P)$; this gives $\operatorname{ec}(\Ocal(P))$. The edges of $\Ccal(P)$ in $E_{\Ccal}(T)$ have endpoint difference $\rho(T^+)-\rho(T^-)$, of $\ell_1$-norm $|T|$; by Theorem~\ref{thm:main}(iii), $\{|T|: T\in\mathfrak T(P)\}=\{|\Phi(S)|: S\in\mathfrak S(P)\}$, which gives the formula. Because $\Phi(S)\subseteq S$, we have $|\Phi(S)|\le|S|\le \operatorname{ec}(\Ocal(P))$; and because $\Phi(S)$ is the union of two antichains, $|\Phi(S)|\le 2w(P)$. Finally, $|\Phi(S)|=\operatorname{ec}(\Ocal(P))$ forces $|S|=\operatorname{ec}(\Ocal(P))$ and $\Phi(S)=S$, that is, $S$ is a connected component of largest cardinality containing no three-element chain; conversely, such a component $S$ has $|\Phi(S)|=|S|$.
\end{proof}

Combined with \cite[Theorem~2.3]{MO}, Corollary~\ref{cor:edgecomplexity} gives: for fixed $r$, $\beta$ and $w$, there is a constant $\kappa(r,\beta,w)$ such that, for every poset $P$ of width at most $w$ and every $\{0,1,\dots,\beta\}$-valued $r\times P$ matrix $M$, the projected polytope $M\Ccal(P)$ has at most $\kappa(r,\beta,w)$ vertices, and the convex $r$-criteria maximum-weight antichain problem reduces to at most $\kappa(r,\beta,w)$ linear maximum-weight antichain problems, independently of $|P|$. The corresponding bound for ideals obtained in this way depends on the largest connected component of $P$, which is not bounded in terms of the width; for a chain, $\operatorname{ec}(\Ocal(P))=|P|$ whereas $\operatorname{ec}(\Ccal(P))=\min\{|P|,2\}$.

\subsection{The comparability graph}\label{sec:comparability}
For a graph $G$ and a set $U$ of vertices, $G[U]$ is the subgraph induced by $U$, $N[U]$ is the closed neighborhood of $U$, $G-N[U]$ is the subgraph induced by the vertices not in $N[U]$, and $\ind(G)$ is the number of stable sets of $G$ (including $\emptyset$).

Because $\Ccal(P)$ is the stable-set polytope of $G(P)$, posets with isomorphic comparability graphs have chain polytopes that coincide up to a permutation of coordinates; \citet{Stanley} used this to establish that the number of linear extensions is a \emph{comparability invariant}, that is, depends only on $G(P)$ up to isomorphism. Combined with Theorem~\ref{thm:HLSS}, this establishes at once that $f_1(\Ocal(P))$ is a comparability invariant; Corollary~\ref{cor:comparability} makes this explicit.

\begin{corollary}\label{cor:comparability}
Let $P$ be a finite poset with comparability graph $G$. Then
\[
  f_1(\Ocal(P))=f_1(\Ccal(P))=\sum_{T}\ind\bigl(G-N[T]\bigr),
\]
where the sum is over all nonempty $T\subseteq P$ such that $G[T]$ is connected and triangle-free.
\end{corollary}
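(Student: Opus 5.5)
We would obtain Corollary~\ref{cor:comparability} by rewriting the chain-side sum of Theorem~\ref{thm:main}(ii), namely $f_1(\Ccal(P))=\sum_{T\in\mathfrak T(P)}|\mathcal J(T^{\parallel})|$, entirely in the language of $G=G(P)$. The first equality $f_1(\Ocal(P))=f_1(\Ccal(P))$ is Theorem~\ref{thm:HLSS}, or equivalently (\ref{eq:formula}) together with the bijection $\Phi$ of Theorem~\ref{thm:main}(iii). Every edge of $\Ccal(P)$ has exactly one difference set, so $f_1(\Ccal(P))$ is the sum of the class sizes $|E_{\Ccal}(T)|$ over $T\in\mathfrak T(P)$. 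By Theorem~\ref{thm:main}(ii) these class sizes are $|\mathcal J(T^{\parallel})|$, which is the number of antichains of $T^{\parallel}$.

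We would then make three translations in turn. First, $T$ is connected as a subset of $P$ exactly when $G[T]$ is connected, since both notions mean that any two elements are joined by a sequence of pairwise comparable consecutive elements of $T$.

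Second, $T$ contains no three-element chain if and only if $G[T]$ is triangle-free. A chain $a\prec b\prec c$ gives a triangle, because comparability is symmetric. Conversely, three pairwise comparable elements form a chain, since comparability within a poset is totally ordered on a pairwise comparable set: order them by $\preceq$.

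Third, $T^{\parallel}=P\setminus(\up T\cup\down T)$ consists of the vertices that are neither in $T$ nor adjacent to a vertex of $T$. This is exactly $P\setminus N[T]$, so the induced subposet $T^{\parallel}$ has comparability graph $G-N[T]$. The antichains of $T^{\parallel}$ are then the stable sets of $G-N[T]$, and $|\mathcal J(T^{\parallel})|=\ind(G-N[T])$, using the bijection between ideals and antichains recalled in the introduction.

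Substituting these three identifications gives the stated formula. None of the steps is a real obstacle. The only point needing care is the triangle/chain equivalence, which relies on transitivity being inherent in $\preceq$, and checking that $T\subseteq N[T]$ so that $T$ itself is excluded from $G-N[T]$, consistent with $T\cap T^{\parallel}=\emptyset$.
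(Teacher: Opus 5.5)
Your proposal is correct and is essentially the paper's own proof: the paper makes the same three translations (connectivity of $T$ versus $G[T]$, three-element chains versus triangles, and $T^{\parallel}$ as the vertex set of $G-N[T]$ with antichains as stable sets) and then applies Theorem~\ref{thm:main}(ii) together with (\ref{eq:formula}). Your extra remarks, that sets $T\notin\mathfrak T(P)$ contribute nothing and that $T\subseteq N[T]$, are harmless details the paper leaves implicit.
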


\begin{proof}
A set $T\subseteq P$ is connected if and only if $G[T]$ is connected. Because three pairwise comparable elements form a chain, $T$ contains a three-element chain if and only if $G[T]$ contains a triangle. The vertex set of $G-N[T]$ is $T^{\parallel}$, and the antichains of $T^{\parallel}$ are the stable sets of $G-N[T]$. The result now follows from Theorem~\ref{thm:main}(ii) and (\ref{eq:formula}).
\end{proof}

The same argument, starting from the adjacency criterion of \citet{Chvatal}, establishes that for an arbitrary graph $G$ the stable-set polytope of $G$ has $\sum_T \ind(G-N[T])$ edges, the sum being over all nonempty $T$ such that $G[T]$ is connected and bipartite. For comparability graphs, an induced subgraph is bipartite if and only if it is triangle-free, because a triangle-free comparability graph comes from a poset with no three-element chain.

For posets of width at most $2$, formula (\ref{eq:formula}) simplifies further.

\begin{corollary}\label{cor:width2}
If $P$ has width at most $2$, then
\[
  f_1(\Ocal(P))=f_1(\Ccal(P))=|\mathfrak S(P)|+\sum_{S\in\mathfrak S(P)}|S^{\parallel}|.
\]
\end{corollary}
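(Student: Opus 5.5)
We start from formula (\ref{eq:formula}), $f_1(\Ocal(P))=\sum_{S\in\mathfrak S(P)}|\mathcal J(S^{\parallel})|$. It then suffices to show that $|\mathcal J(S^{\parallel})|=1+|S^{\parallel}|$ for every $S\in\mathfrak S(P)$. Since $|\mathcal J(Q)|$ counts the antichains of $Q$, and the empty set and the singletons always contribute exactly $1+|Q|$, this holds precisely when $S^{\parallel}$ contains no two-element antichain. So the claim reduces to showing that, for $S\in\mathfrak S(P)$, the subposet $S^{\parallel}$ is a chain.

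The key step is an antichain argument. Suppose $c,c'\in S^{\parallel}$ are incomparable, and pick any $s\in S$, which exists because $S\neq\emptyset$. By definition of $S^{\parallel}$, the element $s$ is incomparable to both $c$ and $c'$. Then $\{s,c,c'\}$ is a three-element antichain, contradicting $w(P)\le 2$. Hence any two elements of $S^{\parallel}$ are comparable.

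It follows that the antichains of $S^{\parallel}$ are exactly $\emptyset$ and its singletons, so $|\mathcal J(S^{\parallel})|=1+|S^{\parallel}|$. Summing over $\mathfrak S(P)$ gives the stated formula, and the equality with $f_1(\Ccal(P))$ is part of (\ref{eq:formula}).

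No step is a genuine obstacle. The one point needing care is to count antichains rather than ideals, using the bijection $I\mapsto\max I$ recalled in the introduction. With that, the proof only uses nonemptiness of $S$; connectivity and convexity play no role.
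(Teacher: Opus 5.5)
Your proof is correct and is essentially the paper's own argument: it shows that $S^{\parallel}$ is a chain because two incomparable elements of $S^{\parallel}$ together with any element of $S$ would form a three-element antichain. It then concludes from $|\mathcal J(S^{\parallel})|=|S^{\parallel}|+1$ and (\ref{eq:formula}), and your remark that only nonemptiness of $S$ is used is accurate.
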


\begin{proof}
For $S\in\mathfrak S(P)$, the set $S^{\parallel}$ is a chain, because two incomparable elements of $S^{\parallel}$ together with an element of $S$ would form a three-element antichain. Hence $|\mathcal J(S^{\parallel})|=|S^{\parallel}|+1$, and the claim follows from (\ref{eq:formula}).
\end{proof}

Posets of width $2$ need not be $X$-free ($X$ itself has width $2$), so Corollary~\ref{cor:width2} is not a consequence of Theorem~\ref{thm:X}.

\begin{remark}\label{rem:XY}
The comparability invariance of $f_0(\Ocal(P))$ and $f_1(\Ocal(P))$ is not shared by $f_2(\Ocal(P))$ or $f_{d-1}(\Ocal(P))$, although, by \cite[Theorem~5.11]{FLM}, it is shared by the number of square $2$-faces. With $\oplus$ denoting ordinal sum (Section~\ref{sec:sp}), consider
\[
  X=A_2\oplus A_1\oplus A_2\qquad\text{and}\qquad Y:=A_2\oplus A_2\oplus A_1\,,
\]
which have the same comparability graph, $K_{2,2,1}\,$. No element of $Y$ has two incomparable elements above it and two incomparable elements below it, so $Y$ is $X$-free. By Theorem~\ref{thm:X}, $\Ocal(Y)$ is unimodularly equivalent to $\Ccal(Y)$, which coincides with $\Ccal(X)$ up to a permutation of coordinates. Hence $\Ocal(Y)$ has the face numbers of $\Ccal(X)$, and, by \cite[Theorem~6.1]{FLM}, $f_2(\Ocal(X))<f_2(\Ocal(Y))$; and, by \cite{HL-unimod}, $f_{d-1}(\Ocal(X))<f_{d-1}(\Ocal(Y))$. (Directly: $\Ocal(X)$ has $4+2+2=8$ facets and $\Ocal(Y)$ has $6+2+1=9$, counting cover relations, minimal elements and maximal elements; see \cite{Stanley}.) Moreover, by \cite[Example~1.2]{HLSS}, the graphs of $\Ocal(X)$ and $\Ocal(Y)\cong\Ccal(X)$ have different degree sequences $(6,6,6,6,6,6,6,6)$ and $(5,6,6,6,6,6,6,7)$, whereas both have $24$ edges.
\end{remark}

\subsection{Series-parallel posets}\label{sec:sp}
Let $P+Q$ denote the disjoint union of the posets $P$ and $Q$, and let $P\oplus Q$ denote their ordinal sum, in which every element of $P$ is below every element of $Q$. The \emph{series-parallel} posets are those obtained from the one-element poset by repeated application of $+$ and $\oplus$; they are precisely the posets that do not contain the four-element poset N, with elements $a_1\,,a_2\,,a_3\,,a_4$ and cover relations $a_1\prec a_2\succ a_3\prec a_4\,$, as an induced subposet (see \cite[Exercise~3.14(a)]{EC1} and \cite{VTL}). In what follows, $f_1(R)$ denotes the common value of $f_1(\Ocal(R))$ and $f_1(\Ccal(R))$ for a finite poset $R$. \citet{FL} express the $f$-polynomials of the order and chain polytopes of $P+Q$ and of $P\oplus Q$ in terms of those of $P$ and $Q$, using that the polytopes of $P+Q$ are products and that those of $P\oplus Q$ are subdirect sums in the sense of McMullen. Extracting the coefficient of edges gives the following proposition; for completeness, we derive it directly from (\ref{eq:formula}).

\begin{proposition}\label{prop:sp}
For finite posets $P$ and $Q$,
\begin{align*}
  |\mathcal J(P+Q)|&=|\mathcal J(P)|\,|\mathcal J(Q)|, & f_1(P+Q)&=f_1(P)\,|\mathcal J(Q)|+|\mathcal J(P)|\,f_1(Q),\\
  |\mathcal J(P\oplus Q)|&=|\mathcal J(P)|+|\mathcal J(Q)|-1, & f_1(P\oplus Q)&=f_1(P)+f_1(Q)\\
  &&&\quad+\bigl(|\mathcal J(P)|-1\bigr)\bigl(|\mathcal J(Q)|-1\bigr).
\end{align*}
\end{proposition}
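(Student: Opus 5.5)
The ideal counts are elementary, so I will dispose of them first. An ideal of $P+Q$ is a pair of ideals, one in $P$ and one in $Q$. An ideal of $P\oplus Q$ either is an ideal of $P$, or contains all of $P$ together with a nonempty ideal of $Q$. This gives $|\mathcal J(P)|+|\mathcal J(Q)|-1$.

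For the edge counts I will use (\ref{eq:formula}), $f_1(R)=\sum_{S\in\mathfrak S(R)}|\mathcal J(S^{\parallel_R})|$, and classify the sets $S\in\mathfrak S(R)$ for $R=P+Q$ and $R=P\oplus Q$.

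\emph{Disjoint union.} A connected set in $P+Q$ lies entirely in $P$ or entirely in $Q$, since no element of $P$ is comparable to an element of $Q$. Convexity is also computed within the summand, because intervals stay inside one summand. Hence $\mathfrak S(P+Q)=\mathfrak S(P)\sqcup\mathfrak S(Q)$. For $S\in\mathfrak S(P)$, the complement set in $P+Q$ is $S^{\parallel_P}+Q$, so $|\mathcal J(S^{\parallel_{P+Q}})|=|\mathcal J(S^{\parallel_P})|\,|\mathcal J(Q)|$ by the first identity. Summing over $S$ gives $f_1(P)|\mathcal J(Q)|$, and symmetrically the sets in $\mathfrak S(Q)$ contribute $|\mathcal J(P)|f_1(Q)$.

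\emph{Ordinal sum.} Every element of $P$ is below every element of $Q$. So if $S$ meets only $P$ or only $Q$, then $S$ is connected and convex in $P\oplus Q$ exactly when it is in $P$ (resp.\ $Q$). The complement set is also unchanged: an element of $Q$ is comparable to every element of $P$, so it never lies in $S^{\parallel}$ for nonempty $S\subseteq P$. These sets therefore contribute $f_1(P)+f_1(Q)$.

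The key step is the remaining sets $S$ meeting both $P$ and $Q$. Such an $S$ is automatically connected, because any $p\in S\cap P$ and $q\in S\cap Q$ are comparable, and every element of $S$ is comparable to one of them. Its complement set is empty, because every element of $P\oplus Q$ is comparable to $p$ or to $q$; so each such $S$ contributes $1$.

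It remains to count these $S$, and I will show they are exactly the sets $S=U\sqcup I$ with $U$ a nonempty up-set of $P$ and $I$ a nonempty ideal of $Q$. For necessity, suppose $p'\in P$ with $p'\succeq p\in S$. Since $p\preceq p'\preceq q$, convexity gives $p'\in S$, so $S\cap P$ is an up-set; dually, $S\cap Q$ is an ideal. For sufficiency, convexity of $U\sqcup I$ is checked in three cases: for $a\preceq c\preceq b$ with $a\in U$ and $b\in I$, either $c\in P$, so $c\in U$, or $c\in Q$, so $c\in I$. The cases with $a,b$ both in $P$ or both in $Q$ are handled the same way. Nonempty up-sets of $P$ correspond to proper ideals by complementation, so there are $|\mathcal J(P)|-1$ of them. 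Together with the $|\mathcal J(Q)|-1$ nonempty ideals of $Q$, the count is $(|\mathcal J(P)|-1)(|\mathcal J(Q)|-1)$, which completes the formula.
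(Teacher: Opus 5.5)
Your proof is correct and takes the same approach as the paper. You classify $\mathfrak S(P+Q)$ and $\mathfrak S(P\oplus Q)$ via (\ref{eq:formula}) and split the ordinal sum into the same three cases, characterizing the mixed sets, each contributing $1$ because $S^{\parallel}=\emptyset$, exactly as the paper does: a nonempty up-set of $P$ together with a nonempty ideal of $Q$.
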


\begin{proof}
The formula for $|\mathcal J(P+Q)|$ follows from $\mathcal J(P+Q)\cong \mathcal J(P)\times \mathcal J(Q)$ (see \cite[Section~3.4]{EC1}), and the formula for $|\mathcal J(P\oplus Q)|$ holds because the ideals of $P\oplus Q$ are the ideals of $P$ and the sets $P\cup J$ with $J$ a nonempty ideal of $Q$. A connected subset of $P+Q$ lies in $P$ or in $Q$; for $S\subseteq P$, convexity is as in $P$, and $S^{\parallel}$ is the disjoint union of the set $S^{\parallel}$ computed in $P$ and all of $Q$. Because $|\mathcal J(\cdot)|$ is multiplicative over disjoint unions, the first formula for $f_1$ follows from (\ref{eq:formula}).

For $P\oplus Q$, there are three cases to consider.

\begin{case}
$S\subseteq P$. Convexity and connectivity of $S$, and the set $S^{\parallel}$, are the same in $P\oplus Q$ as in $P$, because every element of $Q$ is comparable to every element of $S$. These sets contribute $f_1(P)$.
\end{case}

\begin{case}
$S\subseteq Q$. Similarly, these sets contribute $f_1(Q)$.
\end{case}

\begin{case}
$S$ meets both $P$ and $Q$. Then $S$ is connected, and $S^{\parallel}=\emptyset$, so each such $S$ contributes $1$. Moreover, $S$ is convex if and only if $S\cap P$ is an up-set of $P$ and $S\cap Q$ is an ideal of $Q$. Indeed, if $S$ is convex, $a\in S\cap P$ and $a\preceq c\in P$, then $a\preceq c\preceq b$ for any $b\in S\cap Q$, so $c\in S$; similarly, $S\cap Q$ is an ideal. Conversely, let $a\preceq c\preceq b$ with $a,b\in S$; if $c\in P$, then $a\in P$, so $c\in S$; if $c\in Q$, then $b\in Q$, so $c\in S$. Because $P$ has $|\mathcal J(P)|-1$ nonempty up-sets (the complements of its proper ideals) and $Q$ has $|\mathcal J(Q)|-1$ nonempty ideals, there are $(|\mathcal J(P)|-1)(|\mathcal J(Q)|-1)$ such sets.
\end{case}
\noindent Summing the three contributions gives the second formula for $f_1\,$.
\end{proof}

Let $u(P):=|\mathcal J(P)|-1$ and $h(P):=f_1(P)-\binom{u(P)}{2}$. By Proposition~\ref{prop:sp}, $|\mathcal J(P)|+f_1(P)\,\varepsilon$ is multiplicative under $+$ in the ring $\mathbb Z[\varepsilon]/(\varepsilon^2)$, and $u$ and $h$ are additive under $\oplus$, because $\binom{u+v}{2}=\binom u2+\binom v2+uv$. For example, $|\mathcal J(A_n)|+f_1(A_n)\varepsilon=(2+\varepsilon)^n$, so $f_1(A_n)=n2^{n-1}$, the number of edges of the $n$-cube. Although it can be extracted from the ordinal-sum formulas of \cite{FL}, we have not found the following explicit formula stated in the literature.

\begin{corollary}\label{cor:layered}
For $P=A_{n_1}\oplus\cdots\oplus A_{n_k}\,$,
\[
  f_1(\Ocal(P))=f_1(\Ccal(P))
  =\binom{\sum_{j=1}^k (2^{n_j}-1)}{2}+\sum_{j=1}^k\left(n_j2^{\,n_j-1}-\binom{2^{n_j}-1}{2}\right).
\]
\end{corollary}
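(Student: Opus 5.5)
The plan is to use Proposition~\ref{prop:sp} through the invariants $u$ and $h$ introduced just before the statement, both of which are additive under $\oplus$. The whole computation then reduces to the antichain $A_n$, followed by a single reassembly.

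First, for each block we compute $|\mathcal J(A_n)|=2^n$, since every subset of an antichain is an ideal. This gives $u(A_n)=2^n-1$. From the observation $|\mathcal J(A_n)|+f_1(A_n)\varepsilon=(2+\varepsilon)^n$, obtained by iterating the disjoint-union formula of Proposition~\ref{prop:sp} starting from the one-element poset (for which $|\mathcal J|=2$ and $f_1=1$), we get $f_1(A_n)=n2^{n-1}$. Hence
\[
h(A_n)=n2^{n-1}-\binom{2^n-1}{2}.
\]

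Second, we justify additivity under $\oplus$. The formula $|\mathcal J(P\oplus Q)|=|\mathcal J(P)|+|\mathcal J(Q)|-1$ gives $u(P\oplus Q)=u(P)+u(Q)$. Substituting into $f_1(P\oplus Q)=f_1(P)+f_1(Q)+u(P)u(Q)$ and using $\binom{u+v}{2}=\binom u2+\binom v2+uv$ gives $h(P\oplus Q)=h(P)+h(Q)$. Because $\oplus$ is associative, induction on $k$ yields
\[
u(P)=\sum_j (2^{n_j}-1),\qquad h(P)=\sum_j h(A_{n_j}).
\]

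Finally, we write $f_1(P)=\binom{u(P)}{2}+h(P)$ and substitute these sums, which is exactly the displayed formula. The equality of $f_1(\Ocal(P))$ and $f_1(\Ccal(P))$ comes from Theorem~\ref{thm:HLSS}, or equivalently from (\ref{eq:formula}).

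There is no real obstacle in this argument. The only point needing care is the additivity of $h$, a one-line identity. As a sanity check, for $X=A_2\oplus A_1\oplus A_2$ the formula gives $\binom{7}{2}+(4-3)+(1-0)+(4-3)=24$, which matches Example~\ref{ex:X}.
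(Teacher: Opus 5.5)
Your proposal is correct and follows the paper's proof. It computes $u(A_n)=2^n-1$ and $h(A_n)=n2^{n-1}-\binom{2^n-1}{2}$, uses the additivity of $u$ and $h$ under $\oplus$ from Proposition~\ref{prop:sp}, and reassembles via $f_1=\binom{u}{2}+h$; your explicit check of the additivity of $h$ and of the value $24$ from Example~\ref{ex:X} simply spell out what the paper leaves implicit.
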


\begin{proof}
We have $u(A_n)=2^n-1$ and $h(A_n)=n2^{n-1}-\binom{2^n-1}{2}$, and $f_1=\binom u2+h$.
\end{proof}

For a $k$-element chain, Corollary~\ref{cor:layered} gives $\binom{k+1}{2}$, the number of edges of a $k$-simplex. The formula is symmetric in $n_1\,,\dots,n_k\,$, in accordance with Corollary~\ref{cor:comparability}, because permuting the layers does not change the comparability graph $K_{n_1\,,\dots,n_k}$; by Remark~\ref{rem:XY}, permuting the layers can change the combinatorial type of $\Ocal(P)$.

\begin{corollary}\label{cor:bounded}
If $\hat R$ is obtained from a finite poset $R$ by adjoining a new minimum and a new maximum, then
\[
  f_1(\Ocal(\hat R))=f_1(\Ocal(R))+2\,|\mathcal J(R)|+1.
\]
\end{corollary}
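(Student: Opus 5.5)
The plan is to write $\hat R=A_1\oplus R\oplus A_1$ and apply the ordinal-sum formula of Proposition~\ref{prop:sp} twice. We use $|\mathcal J(A_1)|=2$ and $f_1(A_1)=1$: the one-element poset gives a segment, with two ideals and one edge. It is convenient to work with the additive invariants $u$ and $h$ introduced after Proposition~\ref{prop:sp}, since both are additive under $\oplus$. We have $u(A_1)=1$ and $h(A_1)=1-\binom12=1$. Hence
\[
u(\hat R)=u(R)+2,\qquad h(\hat R)=h(R)+2 .
\]

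Writing $u:=u(R)=|\mathcal J(R)|-1$, we get
\[
f_1(\hat R)=\binom{u+2}{2}+h(R)+2=f_1(R)-\binom u2+\binom{u+2}{2}+2 .
\]
Since $\binom{u+2}{2}-\binom u2=2u+1$, this gives
\[
f_1(\hat R)=f_1(R)+2u+3=f_1(R)+2|\mathcal J(R)|+1 .
\]
The final step is to note that $f_1(\hat R)=f_1(\Ocal(\hat R))$ by Theorem~\ref{thm:HLSS}, which is how $f_1(\cdot)$ is defined in this section.

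As a cross-check, one can apply the $\oplus$ formula directly. First,
\[
f_1(A_1\oplus R)=1+f_1(R)+(|\mathcal J(R)|-1),
\]
and $|\mathcal J(A_1\oplus R)|=|\mathcal J(R)|+1$. Adjoining the top element then adds $1+|\mathcal J(R)|$ more edges. The total is
\[
f_1(R)+|\mathcal J(R)|+1+|\mathcal J(R)|=f_1(R)+2|\mathcal J(R)|+1,
\]
which agrees. There is no real obstacle here; the only care needed is the edge case $R=\emptyset$. In that case $\hat R$ is a two-element chain, and the formula gives $0+2+1=3=\binom32$, which is correct.
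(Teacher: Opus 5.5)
Your proof is correct and is essentially the paper's argument: the paper also writes $\hat R=A_1\oplus R\oplus A_1$ and applies Proposition~\ref{prop:sp} twice, so your cross-check is its computation. Your main route through the additive invariants $u$ and $h$ is only a repackaging of that same ordinal-sum formula.
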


\begin{proof}
Because $\hat R=A_1\oplus R\oplus A_1\,$, $|\mathcal J(A_1)|=2$ and $f_1(A_1)=1$, two applications of Proposition~\ref{prop:sp} give $|\mathcal J(R\oplus A_1)|=|\mathcal J(R)|+1$, $f_1(R\oplus A_1)=f_1(R)+|\mathcal J(R)|$, and then $f_1(\hat R)=1+f_1(R)+|\mathcal J(R)|+|\mathcal J(R)|=f_1(R)+2\,|\mathcal J(R)|+1$.
\end{proof}

In particular, the number of edges of the order polytope of a finite lattice with at least two elements is determined by its proper part, that is, by the subposet obtained by removing its minimum and its maximum.

\subsection{Zigzag and crown posets}\label{sec:fences}
For $n\ge1$, the \emph{zigzag poset} (or \emph{fence poset}) $Z_n$ has elements $a_1\,,\dots,a_n$ and cover relations $a_1\prec a_2\succ a_3\prec a_4\succ\cdots$ (see \cite[Exercise~3.66]{EC1}); its comparability graph is the path $a_1a_2\cdots a_n\,$. For $n\ge2$, the \emph{crown poset} $\mathrm{Cr}_n$ has elements $a_1\,,\dots,a_n\,,b_1\,,\dots,b_n$ and cover relations $a_i\prec b_i$ and $a_{i+1}\prec b_i$ for $1\le i\le n$, with $a_{n+1}:=a_1$; its comparability graph is the cycle $a_1b_1a_2b_2\cdots a_nb_n$ of length $2n$, so $\mathrm{Cr}_n$ is the closed analogue of $Z_{2n}\,$. Neither poset contains a three-element chain, so all of their subsets are convex; the connected subsets of $Z_n$ are the segments of the path, and those of $\mathrm{Cr}_n$ are the arcs of the cycle and the whole cycle. The order polytope of $Z_n$ has been studied extensively from the point of view of Ehrhart theory (see \cite{PZ} and the references there), and order polytopes of crown posets are studied in \cite{LL}. For $n\ge4$, $\{a_1\,,a_2\,,a_3\,,a_4\}$ is an induced copy of N in $Z_n\,$, and for $n\ge3$, $\{a_1\,,b_1\,,a_2\,,b_2\}$ is an induced copy of N in $\mathrm{Cr}_n\,$, so Proposition~\ref{prop:sp} does not apply. We let $F_k$ and $L_k$ denote the Fibonacci and Lucas numbers, with $F_1=F_2=1$, $L_1=1$ and $L_2=3$.

\begin{proposition}\label{prop:fence}
For $n\ge1$, $f_0(\Ocal(Z_n))=F_{n+2}$ and
\[
  f_1(\Ocal(Z_n))=f_1(\Ccal(Z_n))=\sum_{\substack{s,t\ge1\\ s+t\le n+1}}F_sF_t
  = 1+\frac{n\,L_{n+3}-F_{n+4}-2F_{n+1}}{5}.
\]
For $n\ge2$, $f_0(\Ocal(\mathrm{Cr}_n))=L_{2n}$ and
\[
  f_1(\Ocal(\mathrm{Cr}_n))=f_1(\Ccal(\mathrm{Cr}_n))=1+2n\,(F_{2n+1}-1).
\]
\end{proposition}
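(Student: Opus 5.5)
The plan is to apply formula (\ref{eq:formula}) directly. Both posets have no three-element chain, so every subset is convex, and $\mathfrak S$ is the set of connected subsets. Working in the comparability graph (Corollary~\ref{cor:comparability}), $S^{\parallel}$ is the set of vertices outside the closed neighborhood $N[S]$. The number $|\mathcal J(Q)|$ of an induced subposet $Q$ equals the number of stable sets of its comparability graph. For a path on $m$ vertices this count is $F_{m+2}$ (with $F_2=1$ for $m=0$); for a cycle of length $2n$ it is $L_{2n}$. These two counts give the vertex formulas $f_0(\Ocal(Z_n))=F_{n+2}$ and $f_0(\Ocal(\mathrm{Cr}_n))=L_{2n}$ immediately, since $f_0(\Ocal(P))=|\mathcal J(P)|$ by Theorem~\ref{thm:stanley}. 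I would prove both counts by the standard recursion on whether the last vertex is in the stable set, cutting the cycle at one vertex to reduce to paths.

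For $Z_n$, a segment $S=\{a_i,\dots,a_j\}$ removes its closed neighborhood. What remains is a path with $i-2$ vertices on the left (or none if $i\le 2$) and a path with $n-j-1$ vertices on the right. So $|\mathcal J(S^{\parallel})|=F_{i}\,F_{n-j+1}$, using the convention that a path on $-1$ vertices contributes $F_1=1$ (when $i=1$ we get $F_1=1$ and when $i=2$ we get $F_2=1$, both correct). Setting $s=i$ and $t=n-j+1$, the constraint $i\le j$ becomes $s+t\le n+1$ with $s,t\ge1$, which gives the first expression. For the closed form I would use the convolution identity $\sum_{s+t=m}F_sF_t=(mL_m-F_m)/5$ and sum over $m\le n+1$. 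Then I would apply standard sums, $\sum mL_m$ via summation by parts together with $\sum F_m=F_{m+2}-1$. I would confirm the resulting expression against $n=1,2,3$, where the values should be $1,3,6$: the chain $Z_2$ gives $3$, and $Z_3\cong A_2\oplus A_1$ gives $6$ by Corollary~\ref{cor:layered}. This identity bookkeeping is the main place to slip, so I would verify small cases carefully.

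For $\mathrm{Cr}_n$, the connected subsets of the $2n$-cycle fall into three groups. The whole cycle contributes $1$. An arc with $k$ vertices, where $1\le k\le 2n-1$, leaves a path on $2n-k-2$ vertices, contributing $F_{2n-k}$; this holds for $k=2n-1$ (contribution $F_1=1$, empty remainder) and for $k=2n-2$ (contribution $F_2=1$). For each length $k\le 2n-1$ there are exactly $2n$ arcs. The total is therefore
\[
  1+2n\sum_{k=1}^{2n-1}F_{2n-k}=1+2n\sum_{m=1}^{2n-1}F_m=1+2n(F_{2n+1}-1),
\]
as claimed. The only delicate points are the boundary conventions for very short remainders and checking that arcs of length $2n-1$ are not double counted; they are not, since each such arc is determined by its one missing vertex.
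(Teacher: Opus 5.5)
Your proposal is correct and follows the paper's proof for the vertex counts, the decomposition into segments and arcs, the contributions $F_sF_t$ and $F_{2n-\ell}$, and the crown total.

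\textbf{Closed form for $Z_n$.} This is the one place where your route differs. You sum the convolution identity over all $m\le n+1$. The paper argues by induction instead: it shows that $5(\Sigma_n-\Sigma_{n-1})$ and $5(R_n-R_{n-1})$ both equal $(n+1)L_{n+1}-F_{n+1}$. That needs the convolution identity only at $m=n+1$, together with $L_n-F_n=2F_{n-1}$, and never requires summing $mL_m$.

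Your route also works, but you left the computation unexecuted. It goes as follows. Summation by parts with $\sum_{m=1}^{N}L_m=L_{N+2}-3$ gives $\sum_{m=1}^{N}mL_m=NL_{N+2}-L_{N+3}+4$. Hence $5\Sigma_n=(n+1)L_{n+3}-L_{n+4}-F_{n+3}+5$. The identity $L_{n+2}=F_{n+2}+2F_{n+1}$ then converts this into the stated expression $1+(nL_{n+3}-F_{n+4}-2F_{n+1})/5$.

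\textbf{Your sanity check is wrong at $n=3$.} The correct value is $8$, not $6$. Corollary~\ref{cor:layered} for $Z_3\cong A_2\oplus A_1$ gives $\binom{4}{2}+h(A_2)+h(A_1)=6+1+1=8$; the value $6$ is only the binomial term. Your own double sum also gives $1+1+1+2+1+2=8$. Checking against $6$ would have made a correct formula look wrong.
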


\begin{proof}
A path on $k\ge 0$ vertices has $F_{k+2}$ stable sets; hence $f_0(\Ocal(Z_n))=F_{n+2}\,$. The connected subsets of $Z_n$ are the segments $S=\{a_j\,,a_{j+1}\,,\dots,a_k\}$, $1\le j\le k\le n$. For such $S$, $S^{\parallel}=\{a_l:l\le j-2\}\sqcup\{a_l:l\ge k+2\}$ is a disjoint union of two sets, with no comparabilities between them, each of whose comparability graphs is a path (on $\max\{j-2,0\}$ and $\max\{n-k-1,0\}$ vertices), so $|\mathcal J(S^{\parallel})|=F_jF_{n+1-k}\,$. Setting $s=j$ and $t=n+1-k$ in (\ref{eq:formula}) gives the sum.

Let $\Sigma_n$ denote the sum, and let $R_n$ denote the right-hand side. Then $\Sigma_1=R_1=1$. Comparing coefficients in $\bigl(q/(1-q-q^2)\bigr)^2$, one obtains the classical identity $\sum_{k=0}^m F_kF_{m-k}=(mL_m-F_m)/5$, whence $5(\Sigma_n-\Sigma_{n-1})=(n+1)L_{n+1}-F_{n+1}\,$. On the other hand,
\[
  5(R_n-R_{n-1})=nL_{n+1}+L_{n+2}-F_{n+2}-2F_{n-1}=(n+1)L_{n+1}-F_{n+1}\,,
\]
the last equality because $L_{n+2}-L_{n+1}-(F_{n+2}-F_{n+1})=L_n-F_n=2F_{n-1}\,$. The result follows by induction.

For $\mathrm{Cr}_n\,$, a cycle of length $m$ has $L_m$ stable sets, so $f_0(\Ocal(\mathrm{Cr}_n))=L_{2n}\,$. The whole cycle $S$ has $S^{\parallel}=\emptyset$ and contributes $1$ to (\ref{eq:formula}). For each $1\le\ell\le 2n-1$, there are $2n$ arcs $S$ of length $\ell$, and $S^{\parallel}$ consists of the vertices of the cycle that are neither in $S$ nor adjacent to $S$; it induces a path on $\max\{2n-\ell-2,0\}$ vertices, so $|\mathcal J(S^{\parallel})|=F_{2n-\ell}\,$. Hence $f_1=1+2n\sum_{m=1}^{2n-1}F_m=1+2n\,(F_{2n+1}-1)$.
\end{proof}

The faces of order polytopes can also be enumerated through connected compatible partitions (see \cite{Stanley}), and \citet{LL} use this method to give explicit formulas for all face numbers of $\Ocal(\mathrm{Cr}_n)$. In particular, $f_0(\Ocal(\mathrm{Cr}_n))=L_{2n}$ is \cite[Corollary~3.7]{LL}, and our closed form for $f_1(\Ocal(\mathrm{Cr}_n))$ agrees with the case $k=1$ of \cite[Theorem~3.6]{LL}, where it is expressed as a double sum. We have not found the formula for $f_1(\Ocal(Z_n))$ in Proposition~\ref{prop:fence} stated in the literature. Because $F_k=\varphi^k/\sqrt5+O(\varphi^{-k})$ and $L_k=\varphi^k+O(\varphi^{-k})$, where $\varphi=(1+\sqrt5)/2$, the average degree of the graph of $\Ocal(Z_n)$ is $2f_1/f_0=(1+1/\sqrt5)\,n+O(1)$.

\subsection{Distances and diameters}\label{sec:diam}
Theorem~\ref{thm:main} compares the edges of $\Ocal(P)$ and $\Ccal(P)$ class by class, but not the way in which the edges are attached to the vertices. The edges of $\Ocal(P)$ with difference set $S$ join the vertices $\rho(D_S\sqcup I')$ and $\rho(D_S\sqcup I'\sqcup S)$, for the ideals $I'$ of $S^{\parallel}$, whereas the matched edges of $\Ccal(P)$ join $\rho(C\sqcup\Phi(S)^-)$ and $\rho(C\sqcup\Phi(S)^+)$, for the antichains $C$ of $S^{\parallel}$. For instance, the vertex $\rho(\emptyset)$ of $\Ocal(P)$ lies on one edge for each nonempty connected ideal of $P$, whereas the vertex $\rho(\emptyset)$ of $\Ccal(P)$ lies only on the $d$ edges whose difference sets are singletons. This redistribution of edges among the vertices is what changes the degree sequences when $X$ appears in $P$ (Section~3), and it can also change the diameters (Example~\ref{ex:diam}). We now establish that the class structure nevertheless controls the diameters to a large extent. The key point is that, on both sides, the vertices can be joined by paths whose steps have the connected components of the symmetric difference as their difference sets; differences between the two graphs arise from shortcuts that use elements outside the symmetric difference. The same component structure also yields a refinement of Theorem~\ref{thm:HLSS}: for every $m$, the two polytopes have equally many ordered pairs of vertices whose symmetric difference has $m$ components (Theorem~\ref{prop:components}).

For $D\subseteq P$, let $\nu(D)$ denote the number of connected components of $D$.

\begin{theorem}\label{prop:widthdiam}
Let $P$ be a finite poset.
\begin{enumerate}[(iii)]
\renewcommand{\theenumi}{\roman{enumi}}\renewcommand{\labelenumi}{\textup{(\theenumi)}}
\item For ideals $I,J$ of $P$, the distance between $\rho(I)$ and $\rho(J)$ in the graph of $\Ocal(P)$ is at most $\nu(I\triangle J)$; for antichains $A,B$ of $P$, the distance between $\rho(A)$ and $\rho(B)$ in the graph of $\Ccal(P)$ is at most $\nu(A\triangle B)$.
\item Both diameters lie in a common interval:
\[
  \sum_K\delta(K)\ \le\ \operatorname{diam}\Ocal(P),\ \operatorname{diam}\Ccal(P)\ \le\ w(P)=\sum_K w(K),
\]
where both sums are over the connected components $K$ of $P$, and $\delta(K)$ is $1$ if $K$ is a chain and $2$ otherwise. Consequently, $|\operatorname{diam}\Ocal(P)-\operatorname{diam}\Ccal(P)|\le\sum_K\bigl(w(K)-\delta(K)\bigr)$; in particular, the two diameters agree whenever every connected component of $P$ has width at most $2$.
\end{enumerate}
\end{theorem}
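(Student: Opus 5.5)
For (i), on the order side I will remove or add the components of $I\triangle J$ one at a time. Write $I\triangle J=(I\setminus J)\sqcup(J\setminus I)$. Comparable elements of $I\triangle J$ cannot lie one in $I\setminus J$ and the other in $J\setminus I$. Indeed, if $a\in I\setminus J$ and $b\in J\setminus I$ with $a\preceq b$, then $a\in J$ because $J$ is an ideal, a contradiction; the case $b\preceq a$ is symmetric. So every component $K$ of $I\triangle J$ lies entirely in $I\setminus J$ or entirely in $J\setminus I$.

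I will first pass from $I$ to $I\cap J$ by deleting the components of $I\setminus J$ one at a time, each time removing a maximal component, that is, one with no element of another remaining component above it. Each intermediate set must be an ideal, and each step $[\rho(I_{k+1}),\rho(I_k)]$ must be an edge by $(\mathrm E_{\Ocal})$ of Theorem~\ref{thm:HL}. Checking the ideal property is the delicate point. Suppose $I_k=(I\cap J)\sqcup(\text{remaining components})$ is an ideal, and we remove a component $K$. If $b\preceq a\in I_{k+1}$ with $b\in K$, then $a\notin I\cap J$, since otherwise $b\in J$. Hence $a$ lies in another remaining component $K'$, so $b\in I\setminus J$. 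But $b$ and $a$ are comparable elements of $I\setminus J$, so they lie in the same component, contradicting $K\neq K'$. So every order in which components are removed works, and nothing special about maximality is needed. Symmetrically, I then add the components of $J\setminus I$ to $I\cap J$. This gives a path of length $\nu(I\triangle J)$.

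On the chain side, let $A_0=A$ and, for each component $K$ of $A\triangle B$ in turn, replace $A_k$ by $A_k\triangle K$. Each intermediate set has the form $(A\cap B)\cup(\text{parts of }A\text{ or }B\text{ on each component})$. Such a set is an antichain: elements in different components of $A\triangle B$ are incomparable, elements of $A\cap B$ are incomparable to everything in $A\cup B$ except themselves, and within one component all elements come from a single antichain. The symmetric difference at each step is exactly one component $K$, which is connected, so $(\mathrm E_{\Ccal})$ gives an edge.

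For the upper bound in (ii), I will use the product decompositions $\Ocal(P)=\prod_K\Ocal(K)$ and $\Ccal(P)=\prod_K\Ccal(K)$ from Example~\ref{ex:diam}, so that diameters add over components and it suffices to treat a connected $P$. The chain side is immediate, since $A\triangle B\subseteq A\cup B$ and the two antichains give $\nu(A\triangle B)\le|A\triangle B|$, which I still need to sharpen to $w(P)$. The key claim is that a set of pairwise incomparable elements, one from each component of $D=I\triangle J$ (or of $A\triangle B$), is an antichain. On the order side, taking a maximal element of $I\setminus J$ or a minimal element of $J\setminus I$ in each component, I must check incomparability across components. If $x\in K\subseteq I\setminus J$ and $y\in K'\subseteq J\setminus I$ with $x\preceq y$, then $x\in J$, a contradiction; and if $y\preceq x$, then $y\in I$, a contradiction. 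So elements of $I\setminus J$ and $J\setminus I$ are always incomparable. Elements in different components on the same side are incomparable by the definition of components. Hence any choice of one element per component is an antichain, so $\nu\le w(P)$. The same argument applies on the chain side. Width is additive over components, which gives $w(P)=\sum_K w(K)$.

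For the lower bound, again by additivity it suffices to handle a connected $K$. A component with at least one element has diameter at least $1$. If $K$ is not a chain, it has incomparable elements $a,b$, and I expect to exhibit a non-adjacent pair on each side. On the order side I take $\down a$ and $\down b$: neither contains the other, so $(\mathrm E_{\Ocal})$ fails and their distance is at least $2$. On the chain side I take $\emptyset$ and $\{a,b\}$, whose symmetric difference $\{a,b\}$ is disconnected, so again the distance is at least $2$. The final consequences follow by subtracting the two bounds. I expect the main obstacle to be the careful verification of the antichain claims in (i); the rest is bookkeeping.
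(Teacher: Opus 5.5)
Your proposal is correct and follows the paper's proof essentially step by step: component-by-component paths for (i) (down to $I\cap J$ and then up to $J$ on the order side, toggling one component of $A\triangle B$ at a time on the chain side), one element from each component of the symmetric difference for the bound by $w(P)$, and additivity over components with the pair $\down a,\down b$ for the lower bound, where your chain-side pair $\emptyset,\{a,b\}$ works as well as the paper's $\{a\},\{b\}$. The detours in your upper-bound paragraph (reducing to connected $P$, picking maximal or minimal elements on each side, and the remark about $|A\triangle B|$) are unnecessary, since elements in different components of any set are incomparable by the definition of components.
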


\begin{proof}
For (i), no element of $I\setminus J$ is comparable to an element of $J\setminus I$: if $a\in I\setminus J$ and $b\in J\setminus I$, then $a\prec b$ would give $a\in J$, and $b\prec a$ would give $b\in I$. Hence the components of $I\triangle J$ are the components of $I\setminus J$ together with those of $J\setminus I$. Let $I'$ be an ideal with $I\cap J\subseteq I'\subseteq I$ that is a union of $I\cap J$ and some components of $I\setminus J$, and let $K$ be one of these components. If $x\in K$ and $x\prec y\in I'$, then $y\notin J$, so $y\in I\setminus J$ is comparable to $x$ and lies in $K$; hence $I'\setminus K$ is an ideal, and $I'$ and $I'\setminus K$ are adjacent by Theorem~\ref{thm:HL}, because $K$ is connected. Removing the components of $I\setminus J$ one at a time thus leads from $I$ to $I\cap J$, and symmetrically, adding the components of $J\setminus I$ one at a time leads from $I\cap J$ to $J$. For antichains, if $K$ is a component of $A\triangle B$, then no element of $K$ is comparable to an element of $A\setminus K$, because such an element would lie in $A\cap B$ or in another component; hence $A\triangle K$ is an antichain adjacent to $A$, and exchanging the components of $A\triangle B$ one at a time leads from $A$ to $B$.

For (ii), elements of distinct components are incomparable, so choosing one element from each component of a set $D$ gives an antichain; hence $\nu(D)\le w(P)$, and (i) bounds both diameters by $w(P)$. Also $w(P)=\sum_K w(K)$, because an antichain of $P$ is a union of antichains of its components. For the lower bound, both diameters are additive over disjoint unions (Example~\ref{ex:diam}). If $K$ is a chain, then $\Ocal(K)$ and $\Ccal(K)$ are simplices, of diameter $1$. Otherwise, $K$ has incomparable elements $a,b$; then $\{a\}$ and $\{b\}$ are not adjacent in $\Ccal(K)$, and $\down a$ and $\down b$ are not adjacent in $\Ocal(K)$, because neither contains the other; so both diameters are at least $2$.
\end{proof}

For the chain polytope, the bound $\operatorname{diam}\Ccal(P)\le w(P)$ is also a special case of \cite[Proposition~16]{ABBLS}.

We now verify the diameters claimed in Example~\ref{ex:diam}. In each case, Theorem~\ref{prop:widthdiam} supplies the upper bound, and the lower bound is established directly.

\begin{proposition}\label{prop:ex5}
Let $P_1$ consist of $X$ together with elements $p,q$ with covers $a\prec p\prec g$ and $a\prec q\prec h$, and let $P_2$ consist of $X$ together with elements $p,q,r,s$ with covers $a\prec p\prec g$, $q\prec h$, $q\prec r$, $b\prec r$ and $s\prec r$.
\begin{enumerate}[(ii)]
\renewcommand{\theenumi}{\roman{enumi}}\renewcommand{\labelenumi}{\textup{(\theenumi)}}
\item $\operatorname{diam}\Ocal(P_1)=2$ and $\operatorname{diam}\Ccal(P_1)=3$.
\item $\operatorname{diam}\Ocal(P_2)=4$ and $\operatorname{diam}\Ccal(P_2)=3$.
\end{enumerate}
\end{proposition}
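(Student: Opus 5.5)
The plan is to obtain all upper bounds from Theorem~\ref{prop:widthdiam}(i), supplemented by explicit short paths where the width bound is too weak. Each lower bound comes from an explicit pair of vertices and a short argument that no shorter path exists, using only the adjacency criteria $(\mathrm E_{\Ocal})$ and $(\mathrm E_{\Ccal})$ of Theorem~\ref{thm:HL}. First I record the structure of the two posets. In $P_1$ the minimal elements are $a,b$, with $c\succ a,b$, $p,q\succ a$, $g\succ c,p$ and $h\succ c,q$; its width is $3$, witnessed for instance by $\{b,p,q\}$. In $P_2$ the element $q$ is minimal, and $\{p,q,b,s\}$ is an antichain; I will check that the width is exactly $4$. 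Both posets are connected and are not chains, so Theorem~\ref{prop:widthdiam}(ii) gives both diameters $\ge 2$, and it gives the upper bounds $\operatorname{diam}\Ccal(P_1)\le3$ and $\operatorname{diam}\Ocal(P_2)\le 4$.

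For (i), the lower bound $\operatorname{diam}\Ccal(P_1)\ge3$ is the argument sketched in Example~\ref{ex:diam}. The neighbours of $\rho(\emptyset)$ are the singletons $\{x\}$. The antichain $\{x\}$ is adjacent to $\{b,p,q\}$ only if $\{x\}\triangle\{b,p,q\}$ is connected. If $x\in\{b,p,q\}$, the remaining two elements are incomparable, so the set is disconnected. If $x\notin\{b,p,q\}$, then $x$ must be comparable to each of $b,p,q$, and no such $x$ exists: $c$ misses $p$, $g$ misses $q$, $h$ misses $p$, and $a$ misses $b$. For the upper bound $\operatorname{diam}\Ocal(P_1)\le2$, I will show that any two ideals $I,J$ have a common neighbour, or are adjacent. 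If $I\setminus J$ and $J\setminus I$ are both connected (or empty), the path through $I\cap J$ works. Otherwise, I will exploit the small size of $P_1$: a disconnected difference of ideals must contain two incomparable pieces, such as $\{b\}$ and $\{p\}$, or $\{p\}$ and $\{q\}$. In each such case I will exhibit an intermediate ideal, typically $I\cup J$ together with $c$, $g$ or $h$, or $I\cap J$ with $a$ removed, whose differences with $I$ and with $J$ are connected because they are joined through $c$, $g$, $h$ or $a$. This amounts to a finite case split by which of $a,b,c$ lie in $I\triangle J$.

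For (ii), the upper bound $\operatorname{diam}\Ccal(P_2)\le3$ needs more than the width. Given antichains $A,B$ with $\nu(A\triangle B)=4$, I will use the fact that $A\triangle B$ then consists of four pairwise incomparable components. By the width, these components are essentially single elements of a $4$-antichain such as $\{p,q,b,s\}$, or small variants of it. In each case I will merge two components in one step through a common comparable element: for example, $r$ is above $q$, $b$ and $s$, so exchanging through $r$ can handle three of them at once. This yields a path of length $3$.

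For the lower bound $\operatorname{diam}\Ocal(P_2)\ge4$, I will take $I=\emptyset$ and $J=\down\{p,q,b,s\}=\{a,p,q,b,s\}$, whose difference has the four components $\{a,p\},\{q\},\{b\},\{s\}$. To rule out a path of length at most $3$, I will use an invariant argument. Each step adds or removes a connected convex set. To separate the four components, a step that touches two of them must also add an element joining them, namely $c$, $g$, $h$ or $r$. Each such element must later be removed, and removing it requires removing everything above it as well. I will try to show that every such overshoot costs at least one extra step, by tracking which of $q,b,s$ can be added jointly only via $r$ (which lies above all three) and $a,b$ only via $c$. This lower bound is the step I expect to be the main obstacle, since it requires controlling all paths rather than exhibiting one. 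If the invariant argument is too delicate, I will fall back on enumerating the neighbours of $\rho(\emptyset)$ (the connected ideals) and of $\rho(J)$ (the sets $J\setminus S$ with $S$ a connected up-set of $J$), and checking that no neighbour of the first is within distance $1$ of a neighbour of the second.
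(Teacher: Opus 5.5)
\textbf{The main gap: the lower bound for $\operatorname{diam}\Ocal(P_2)$ fails with your pair.} The vertices $\rho(\emptyset)$ and $\rho(\{a,b,p,q,s\})$ are at distance at most $3$:
\begin{itemize}
\item $K=\{a,b,c,p,q,r,s\}$ is an ideal, and it is connected ($c$ joins $a$ and $b$, $r$ joins $b,q,s$, and $p$ lies above $a$). So $\rho(K)$ is adjacent to $\rho(\emptyset)$.
\item Removing the maximal element $c$ gives the ideal $\{a,b,p,q,r,s\}$.
\item Removing $r$ then gives $\{a,b,p,q,s\}$.
\end{itemize}
Each of the last two steps has a singleton difference set. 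So the ``overshoot costs an extra step'' invariant is false: a single step adds both joining elements $c$ and $r$. Your fallback enumeration would find exactly this adjacency between a neighbour of $\rho(\emptyset)$ and a neighbour of $\rho(J)$. The underlying trouble is that $\rho(\emptyset)$ is adjacent to every connected ideal.

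The paper keeps $J=\{a,b,p,q,s\}$ but starts from $I=\{a,b,c\}$, so that $I\triangle J=\{c,p,q,s\}$. With $c$ already present, a connected set added to $I$ that meets two of $p,q,s$ must pass through $r$, so it contains $q,r,s$ and not $p$. This yields three types of ideals equal or adjacent to $\rho(I)$: $K\subseteq\{a,b\}$; $c\in K$ with $|K\cap\{p,q,s\}|\le1$; or $c\in K$, $\{q,r,s\}\subseteq K$, $p\notin K$. It also yields three types for $\rho(J)$: $c,r\notin K'$ with $|K'\cap\{p,q,s\}|\ge2$; $K'=J\cup\{r\}$; or $J\cup\{c\}\subseteq K'$ with $r\notin K'$. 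A short check then shows that no $K$ equals or is adjacent to any $K'$.

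\textbf{Smaller omissions.}
\begin{itemize}
\item You give no argument for $\operatorname{diam}\Ccal(P_2)\ge3$. Take $\emptyset$ and $\{a,b,s\}$, and suppose a singleton $\{x\}$ is adjacent to $\{a,b,s\}$. If $x\in\{a,b,s\}$, the symmetric difference consists of two incomparable elements. Otherwise $x$ must be comparable to $s$, so $x=r$, which is incomparable to $a$.
\item The bound $\operatorname{diam}\Ccal(P_1)\le3$ needs $w(P_1)\le3$, and the antichain $\{b,p,q\}$ only shows $w(P_1)\ge3$. Use the chain cover $\{a,p,g\},\{b,c,h\},\{q\}$.
\item For $\operatorname{diam}\Ocal(P_1)\le2$ you only promise a case split. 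The paper avoids one by using hubs. An ideal avoiding $b$ lies in $\{a,p,q\}$ and is equal or adjacent to both $\rho(\emptyset)$ and $\rho(P_1)$. An ideal containing $b$ is equal or adjacent to $\rho(\{b\})$ and to one of $\rho(\emptyset)$, $\rho(P_1)$.
\item For $\operatorname{diam}\Ccal(P_2)\le3$, your ``exchange through $r$'' idea is the paper's first case, but it does not cover everything. After reducing to $A\cap B=\emptyset$ and $A\cup B=V\cup\{q,s\}$, with $V$ one of five sets, the cases with $c\in V$ are less transparent. For instance, the paper joins $\{c,s\}$ to $\{p,q\}$ through $\{a,b,s\}$ and $\{h,p,r\}$, with a middle step that changes six elements.
\end{itemize}
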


\begin{proof}[Proof of Proposition~\ref{prop:ex5}(i)]
Write $P=P_1$. The chains $\{a,p,g\}$, $\{b,c,h\}$ and $\{q\}$ cover $P$, so $w(P)=3$ and $\operatorname{diam}\Ccal(P)\le3$; the argument in Example~\ref{ex:diam} gives equality. Every element other than $b$ lies above $a$. An ideal $I$ with $b\notin I$ is contained in $\{a,p,q\}$, so it is empty or connected, and $P\setminus I\supseteq\{b,c,g,h\}$ is connected because $b\prec c\prec g,h$, $p\prec g$ and $q\prec h$; hence $\rho(I)$ is equal or adjacent to both $\rho(\emptyset)$ and $\rho(P)$. An ideal $J\ne\{b\}$ with $b\in J$ contains $a$, so $J\setminus\{b\}$ is connected and $\rho(J)$ is adjacent to $\rho(\{b\})$; moreover, $J$ is connected if $c\in J$, and $P\setminus J\supseteq\{c,g,h\}$ is connected if $c\notin J$, so $\rho(J)$ is adjacent to $\rho(\emptyset)$ or to $\rho(P)$, as is $\rho(\{b\})$. Hence two ideals not containing $b$ are joined through $\rho(\emptyset)$, two containing $b$ through $\rho(\{b\})$, and one of each through $\rho(\emptyset)$ or $\rho(P)$, so $\operatorname{diam}\Ocal(P)=2$.
\end{proof}

\begin{proof}[Proof of Proposition~\ref{prop:ex5}(ii) for $\Ocal(P_2)$]
Write $P=P_2$. The elements comparable to $r$ are $b,q,s$, and $s$ is comparable only to $r$. The chains $\{a,p,g\}$, $\{b,c,h\}$, $\{q,r\}$ and $\{s\}$ cover $P$, and $\{a,b,q,s\}$ is an antichain, so $w(P)=4$ and $\operatorname{diam}\Ocal(P)\le4$ by Theorem~\ref{prop:widthdiam}(ii).

For the lower bound, let $I=\{a,b,c\}$ and $J=\{a,b,p,q,s\}$. A nonempty up-set of $I$ contains $c$, and within $P\setminus I=\{g,h,p,q,r,s\}$ the only comparabilities are $p\prec g$, $q\prec h$, $q\prec r$ and $s\prec r$, so a connected subset of $P\setminus I$ meeting two of $p,q,s$ contains $q,r,s$ and not $p$. Hence every ideal $K$ with $\rho(K)$ equal or adjacent to $\rho(I)$ satisfies one of: ($\alpha$)~$K\subseteq\{a,b\}$; ($\beta$)~$c\in K$ and $|K\cap\{p,q,s\}|\le1$; ($\gamma$)~$c\in K$, $\{q,r,s\}\subseteq K$ and $p\notin K$. Within $J$ the only comparability is $a\prec p$, and within $P\setminus J=\{c,g,h,r\}$ the only ones are $c\prec g$ and $c\prec h$; so a connected set removed from $J$ is a singleton or $\{a,p\}$, and a connected set added to $J$ is $\{r\}$ or contains $c$. Hence every ideal $K'$ with $\rho(K')$ equal or adjacent to $\rho(J)$ satisfies one of: ($\alpha'$)~$c,r\notin K'$ and $|K'\cap\{p,q,s\}|\ge2$; ($\beta'$)~$K'=J\cup\{r\}$; ($\gamma'$)~$J\cup\{c\}\subseteq K'$ and $r\notin K'$. Such $K$ and $K'$ are never equal, and we claim that they are never adjacent, so that the distance between $\rho(I)$ and $\rho(J)$ is at least $4$. Suppose they were adjacent, with difference set $D$. If $K$ satisfies ($\alpha$), then $K\subset K'$ and $D$ contains two of $p,q,s$; in case ($\alpha'$), $D\subseteq\{a,b,p,q,s\}$, where the only comparability is $a\prec p$; in case ($\beta'$), the component of $p$ in $D$ lies in $\{a,p\}$; in case ($\gamma'$), $s\in D$ and $r\notin D$, so $s$ is isolated in $D$. If $K$ satisfies ($\beta$) or ($\gamma$) and $K'$ satisfies ($\alpha'$) or ($\beta'$), then $c\in K\setminus K'$, so $K'\subset K$; this excludes ($\beta$), and in case ($\gamma$) it forces $p\notin K'$, so $K'$ is $J\setminus\{p\}$ or $J\setminus\{a,p\}$ and contains $b,q,s$, whence $r$ is isolated in $D\ni c,r$. If $K$ satisfies ($\beta$) or ($\gamma$) and $K'$ satisfies ($\gamma'$), then $K'\not\subseteq K$ because $p,q,s\in K'$, so $K\subset K'$; this excludes ($\gamma$) because $r\in K\setminus K'$, and in case ($\beta$) either $s\in D$ and $s$ is isolated in $D$, or $s\in K$, $p,q\in D\subseteq\{g,h,p,q\}$, and $p,q$ lie in different components of $D$. Therefore $\operatorname{diam}\Ocal(P)=4$.
\end{proof}

\begin{proof}[Proof of Proposition~\ref{prop:ex5}(ii) for $\Ccal(P_2)$]
Write $P=P_2$. As in the previous proof, the elements comparable to $r$ are $b,q,s$, the element $s$ is comparable only to $r$, and the chains $\{a,p,g\}$, $\{b,c,h\}$, $\{q,r\}$ and $\{s\}$ cover $P$. The antichains adjacent to $\emptyset$ are the singletons, and no singleton $\{x\}$ is adjacent to $\{a,b,s\}$: if $x\in\{a,b,s\}$, the other two elements are incomparable, and otherwise $x$ would be comparable to $s$, so $x=r$, but $r$ is incomparable to $a$. Thus $\operatorname{diam}\Ccal(P)\ge3$. For the upper bound, let $A,B$ be antichains; by Theorem~\ref{prop:widthdiam}(i) we may assume $\nu(A\triangle B)=4$. Choosing one element in each component gives an antichain of size $4$, which meets each of the four chains above; as $r$ is comparable to $s$, $h$ to $q$, $g$ to $b$ and $c$, and $a$ to $c$, it is one of $\{a,b,q,s\}$, $\{b,p,q,s\}$ and $\{c,p,q,s\}$. So $q$ and $s$ lie in different components of $A\triangle B$; hence $r\notin A\triangle B$, and $r\notin A\cap B$ because $r$ is comparable to $q$. Likewise $g$ (comparable to $a,b,c,p$) and $h$ (comparable to $a,b,c,q$) would join two components of $A\triangle B$, and cannot lie in $A\cap B$, whose elements are incomparable to every element of $A\triangle B$. In $\{a,b,c,p,q,s\}$ the only comparabilities are $a\prec c$, $b\prec c$ and $a\prec p$, so $A\cap B=\emptyset$ (each of $a,b,c,p$ in $A\cap B$ would leave at most one component of $A\triangle B$ in $\{a,b,c,p\}$), and $A\cup B=V\cup\{q,s\}$ with $V\in\bigl\{\{a,b\},\{a,b,p\},\{b,p\},\{b,c,p\},\{c,p\}\bigr\}$, the subsets of $\{a,b,c,p\}$ with two components.

If $V$ is $\{a,b\}$, $\{a,b,p\}$ or $\{b,p\}$, let $Y\in\{A,B\}$ contain $b$ and let $Z$ be the other. Then $X:=(Y\setminus\{b,q,s\})\cup\{r\}$ is an antichain, $Y\triangle X=\{r\}\cup(Y\cap\{b,q,s\})$ is connected, and $X\triangle Z=(V\setminus\{b\})\cup(Z\cap\{q,s\})\cup\{r\}$ has two components, so $A$ and $B$ are at distance at most $3$.

If $c\in V$, let $W\in\{A,B\}$ contain $c$, let $Z$ be the other, and let $H=\{g,h,r\}$; an antichain disjoint from $H$ is adjacent to $H$ when its union with $H$ is connected. If $q\in W$, then $W\cup H$ is connected because $c$ is comparable to $g,h$ and $q$ to $h,r$; and $Z\subseteq\{b,p,s\}$, so either $b\in Z$ and $Z\cup H$ is connected, or $H$, $Z\cup\{b\}$, $Z$ is a path. If $q\in Z$ and $b\in Z$, then $Z\cup H$ is connected, and $W\subseteq\{c,p,s\}$, $(W\setminus\{s\})\cup\{r\}$, $H$ is a path. If $q\in Z$ and $b\notin V$, then $W\subseteq\{c,p,s\}$, $Z\subseteq\{p,q,s\}$, and
\[
  W,\quad (W\setminus\{c,p\})\cup\{a,b\},\quad (Z\setminus\{q,s\})\cup\{h,r\},\quad Z
\]
is a path, the three difference sets being $\{a,b,c\}\cup(W\cap\{p\})$, $\{a,b,h,r\}\cup(W\cap\{s\})\cup(Z\cap\{p\})$ and $\{h,r\}\cup(Z\cap\{q,s\})$. In every case $A$ and $B$ are at distance at most $3$, so $\operatorname{diam}\Ccal(P)=3$.
\end{proof}

For the first poset of Example~\ref{ex:diam}, the interval is $[2,3]$, with $\operatorname{diam}\Ocal(P)=2$ and $\operatorname{diam}\Ccal(P)=3$. Both bounds of Theorem~\ref{prop:widthdiam}(ii) are attained: the lower bound by every series-parallel poset (Proposition~\ref{prop:spdiam}), and the upper bound by every zigzag poset and every crown poset (Theorem~\ref{prop:zigzagdiam}). Series-parallel posets need not be $X$-free; nevertheless, by Proposition~\ref{prop:spdiam} and Theorem~\ref{prop:widthdiam}(ii), a difference of diameters as in Example~\ref{ex:diam} requires an induced copy of N and a connected component of width at least $3$.

Theorem~\ref{thm:HLSS} counts the pairs of adjacent vertices, which by Theorem~\ref{thm:HL} are the pairs whose symmetric difference is connected. The class structure of Theorem~\ref{thm:main} extends this count to every number of components.

\begin{theorem}\label{prop:components}
For every $m\ge0$, the number of ordered pairs $(I,J)$ of ideals of $P$ with $\nu(I\triangle J)=m$ equals the number of ordered pairs $(A,B)$ of antichains of $P$ with $\nu(A\triangle B)=m$. Equivalently,
\[
  \sum_{I,J}q^{\,\nu(I\triangle J)}=\sum_{A,B}q^{\,\nu(A\triangle B)},
\]
where the sums are over all ideals $I,J$ and all antichains $A,B$ of $P$.
\end{theorem}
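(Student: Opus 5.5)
The plan is to extend the class-by-class decomposition of Theorem~\ref{thm:main} from connected difference sets to difference sets with $m$ components. Both counts will be written as sums over the same index set, namely families of $m$ mutually unrelated members of $\mathfrak S(P)$ (respectively $\mathfrak T(P)$). The two sums are then matched term by term by applying $\Phi$ to each member. The case $m=0$ is trivial: both sides count the diagonal pairs, $|\mathcal J(P)|$ of them. So assume $m\ge1$.

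\emph{Order side.} Call $S_1,\dots,S_m\subseteq P$ \emph{unrelated} if no element of $S_i$ is comparable to an element of $S_j$ for $i\ne j$.

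1. Let $(I,J)$ be a pair of ideals. By the argument in the proof of Theorem~\ref{prop:widthdiam}(i), the components of $I\triangle J$ are those of $I\setminus J$ together with those of $J\setminus I$. By (\ref{eq:diffconvex}), each of $I\setminus J=I\setminus(I\cap J)$ and $J\setminus I$ is convex. A component of a convex set is again convex, because an element between two elements of a component is comparable to both. Hence the components $S_1,\dots,S_m$ are unrelated members of $\mathfrak S(P)$, each carrying a sign recording whether it lies in $I\setminus J$ or in $J\setminus I$.

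2. Let $U:=S_1\sqcup\dots\sqcup S_m$. Then $U$ is convex: if $a\preceq c\preceq b$ with $a\in S_i$ and $b\in S_j$, then $i=j$ because the sets are unrelated, and so $c\in S_i$.

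3. Generalize Lemma~\ref{lem:order} as follows. The pairs $(I,J)$ with prescribed signed components $S_1,\dots,S_m$ correspond bijectively to the ideals $K'$ of $U^{\parallel}$, via $I\cap J=D_U\sqcup K'$. The key checks are these.
\begin{itemize}
\item If $x\prec s\in S_i\subseteq I\setminus J$, then $x\in I$. If also $x\notin J$, then $x$ lies in the component of $s$, so $x\notin D_U$ would be impossible; thus every element of $D_U$ lies in $I\cap J$.
\item If $y\succ s\in S_i\subseteq I\setminus J$ with $y\notin U$, then $y\in I\cap J$ would force $s\in J$, a contradiction. So $I\cap J$ avoids $\up U$.
\item Conversely, for any choice of signs and any ideal $K'$ of $U^{\parallel}$, adding to $D_U\sqcup K'$ the positively signed components gives $I$, and adding the negatively signed ones gives $J$. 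Both are ideals by the argument of Lemma~\ref{lem:order}, using that $D_U\sqcup S_i$ is down-closed modulo the unrelatedness of the components.
\end{itemize}
Hence the number of ordered pairs of ideals with $m$ components is
\[
\sum 2^m\,|\mathcal J(U^{\parallel})|,
\]
the sum being over unordered $m$-sets of unrelated members of $\mathfrak S(P)$.

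\emph{Chain side.} The analogue of Lemma~\ref{lem:chain} gives the same structure. The components $T_1,\dots,T_m$ of $A\triangle B$ are unrelated members of $\mathfrak T(P)$. Each $T_i$ has $2$ orientations, namely which of $T_i^{+}$ and $T_i^{-}$ lies in $A$; this holds even when $|T_i|=1$. Finally, $A\cap B$ is an antichain of $(\bigsqcup_i T_i)^{\parallel}$. Conversely, any such data give a valid pair, since elements of different components and of $A\cap B$ are mutually incomparable. So the number of ordered pairs of antichains with $m$ components is
\[
\sum 2^m\,|\mathcal J\bigl((\textstyle\bigsqcup_i T_i)^{\parallel}\bigr)|.
\]

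\emph{Matching.} Apply $\Phi$ to each component. The proof of Theorem~\ref{thm:main}(iii) shows that an element is comparable to some element of $S$ if and only if it is comparable to some element of $\Phi(S)$; in other words, $\up S\cup\down S=\up\Phi(S)\cup\down\Phi(S)$. Consequently:
\begin{itemize}
\item $S_1,\dots,S_m$ are unrelated if and only if $\Phi(S_1),\dots,\Phi(S_m)$ are unrelated. The converse direction uses $\cl{T}$, together with the same identity applied to $T$ and $\cl T$.
\item $(\bigsqcup_i S_i)^{\parallel}=\bigcap_i S_i^{\parallel}=\bigcap_i\Phi(S_i)^{\parallel}=(\bigsqcup_i\Phi(S_i))^{\parallel}$.
\end{itemize}
Since $\Phi$ is a bijection from $\mathfrak S(P)$ to $\mathfrak T(P)$, it matches the two index sets, and corresponding terms are equal. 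This proves the theorem, and the generating-function form follows immediately.

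The main obstacle I expect is step 3 on the order side: showing that the completions $I\cap J$ depend only on $U$, and not on the signs. This requires care in showing that an element below a component, or above one, cannot lie in another component or in the symmetric difference. I would handle it exactly via the unrelatedness of the components, as sketched above.
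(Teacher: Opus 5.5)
Your proposal is correct and follows essentially the paper's proof. Both arguments count each side as $2^m$ times a sum of $|\mathcal J(U^{\parallel})|$, over convex sets $U$ on the order side and over sets with no three-element chain on the chain side, in each case with $m$ components, and both match the two sums by applying $\Phi$ componentwise using $\Phi(K)^{\parallel}=K^{\parallel}$. The only difference is cosmetic: the paper passes through the pairs $(I\cap J,I\cup J)$ and applies Lemma~\ref{lem:order}, which already holds verbatim for disconnected convex sets, whereas you re-derive a signed version of that lemma.
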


\begin{proof}
For a set $W\subseteq P$, let $\mathcal K(W)$ be the set of components of $W$. Elements of different components are incomparable, and the components of a convex set $S$ are convex: if $a\preceq c\preceq b$ with $a,b$ in a component $K$ of $S$, then $c\in S$ is comparable to $a$, so $c\in K$.

\emph{Ideals.} Let $N_m$ be the number of pairs $(I_1,I_2)$ of ideals with $I_1\subseteq I_2$ and $\nu(I_2\setminus I_1)=m$. Given ideals $I,J$, the pair $(I\cap J,I\cup J)$ is such a pair with $(I\cup J)\setminus(I\cap J)=I\triangle J$, and, as in the proof of Theorem~\ref{prop:widthdiam}, $\mathcal K(I\triangle J)=\mathcal K(I\setminus J)\sqcup\mathcal K(J\setminus I)$. Conversely, given $(I_1,I_2)$ and any of the $2^m$ ways of distributing the components of $I_2\setminus I_1$ between two sets $Q_1$ and $Q_2\,$, the sets $I_1\cup Q_1$ and $I_1\cup Q_2$ are ideals, as in that proof, with intersection $I_1$ and union $I_2\,$. So the number of ordered pairs of ideals with $\nu(I\triangle J)=m$ is $2^mN_m\,$. By Lemma~\ref{lem:order}, $N_m=\sum_S|\mathcal J(S^{\parallel})|$, where the sum is over the convex sets $S$ with $\nu(S)=m$; for $m=0$ this holds trivially, with $S=\emptyset$ and $N_0=|\mathcal J(P)|$.

\emph{Antichains.} Given antichains $A,B$, the set $T:=A\triangle B$ contains no three-element chain, $C:=A\cap B$ is an antichain of $T^{\parallel}$, and each component of $T$ meets $A$ in one of its two layers, as in Part~(ii) of Section~2. Conversely, given a set $T$ with no three-element chain and $\nu(T)=m$, an antichain $C$ of $T^{\parallel}$, and one of the $2^m$ choices of a layer of each component of $T$, the set $A$ consisting of $C$ and the chosen layers and the set $B$ consisting of $C$ and the other layers are antichains with $A\triangle B=T$, because elements of different components are incomparable. So the number of ordered pairs of antichains with $\nu(A\triangle B)=m$ is $2^m\sum_T|\mathcal J(T^{\parallel})|$, where the sum is over the sets $T$ with no three-element chain and $\nu(T)=m$.

\emph{Matching.} Finally, $S\mapsto\bigcup_{K\in\mathcal K(S)}\Phi(K)$ is a bijection from the convex sets to the sets with no three-element chain that preserves the number of components and the set of incomparable elements. Indeed, the sets $\Phi(K)$, $K\in\mathcal K(S)$, lie in the mutually incomparable components $K$, so they are the components of the image, and the image has incomparable set $\bigcap_K\Phi(K)^{\parallel}=\bigcap_K K^{\parallel}=S^{\parallel}$ by Theorem~\ref{thm:main}(iii). In the other direction, if $T$ has no three-element chain, then the convex closures of its components are mutually incomparable: if $x\in\cl{T_1}$ and $y\in\cl{T_2}$ with $x\preceq y$, then $t_1\preceq x\preceq y\preceq t_2$ for some $t_1\in T_1$ and $t_2\in T_2\,$, so $T_1$ and $T_2$ are the same component. Hence their union is convex, with these closures as its components, and it is mapped to $T$. So the two sums agree.
\end{proof}

For $m=0$, Theorem~\ref{prop:components} says that $\Ocal(P)$ and $\Ccal(P)$ have equally many vertices, and for $m=1$ it is Theorem~\ref{thm:HLSS}, with each edge counted twice. By Theorem~\ref{prop:widthdiam}(i), the pairs counted for a given $m$ are joined by paths of length at most $m$ in both graphs.

\begin{proposition}\label{prop:spdiam}
Let $P=P_1\oplus P_2$ with $P_1\,,P_2$ nonempty. Then $\operatorname{diam}\Ocal(P)=\operatorname{diam}\Ccal(P)$, and this common value is $1$ if $P$ is a chain and $2$ otherwise. Consequently, $\operatorname{diam}\Ocal(P)=\operatorname{diam}\Ccal(P)=\sum_K\delta(K)$ for every series-parallel poset $P$, with the sum and $\delta$ as in Theorem~\ref{prop:widthdiam}(ii), and both diameters are at most $2$ for every poset with a minimum or a maximum, in particular for every finite lattice.
\end{proposition}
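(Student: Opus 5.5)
The argument for $P=P_1\oplus P_2$ has three steps: an upper bound of $2$ for both diameters, the chain case, and a reduction to components.

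\emph{Upper bound of $2$.} First note that $P$ is connected, so by Theorem~\ref{prop:widthdiam}(ii) both diameters are at least $\delta(P)$, which is $1$ if $P$ is a chain and $2$ otherwise. If $P$ is a chain, both polytopes are simplices, so both diameters equal $1$. It therefore suffices to show that both diameters are at most $2$.

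For $\Ocal(P)$, I route every pair of ideals through a suitable hub. By Theorem~\ref{thm:HL}, an ideal $I$ is adjacent to $\emptyset$ exactly when $I$ is nonempty and connected. It is adjacent to $P$ exactly when $P\setminus I$ is nonempty and connected. Every ideal $I$ of $P$ is either an ideal of $P_1$ or of the form $P_1\cup J'$ with $J'$ an ideal of $P_2$.
\begin{itemize}
\item If $I\supsetneq P_1$, then $I$ is connected, since it meets both summands and every element of $P_1$ is comparable to every element of $P_2$. So $\rho(I)$ is adjacent to $\rho(\emptyset)$.
\item If $I\subsetneq P_1$, then $P\setminus I$ meets both summands and is connected. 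So $\rho(I)$ is adjacent to $\rho(P)$.
\item The ideal $P_1$ itself is adjacent to both $\emptyset$ and $P$ only if $P_1$ and $P_2$ are connected. In general, $\rho(P_1)$ is comparable (as ideals) to every other ideal, and the difference sets $I\setminus P_1$ and $P_1\setminus I$ need not be connected.
\end{itemize}
So I use two hubs. Two ideals in the first class are joined through $\emptyset$, and two in the second class through $P$. For $I\subsetneq P_1\subsetneq J$, the difference $J\setminus I$ meets both summands, hence is connected, so $I$ and $J$ are directly adjacent. The ideal $P_1$ remains. Pair it with $J\supsetneq P_1$ via the intermediate ideal $I_0:=P_1\setminus\{m\}$ for a maximal element $m$ of $P_1$. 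Then $P_1\setminus I_0=\{m\}$ is a singleton, and $J\setminus I_0$ meets both summands, so both steps are edges. The case of $P_1$ paired with $I\subsetneq P_1$ is symmetric, using $P_1\cup\{m'\}$ for a minimal element $m'$ of $P_2$.

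For $\Ccal(P)$, every antichain $A$ lies entirely in $P_1$ or entirely in $P_2$. If $A\subseteq P_1$ and $B\subseteq P_2$ with both nonempty, then $A\triangle B=A\cup B$ is connected, so $A$ and $B$ are adjacent. If $A,B\subseteq P_1$, choose any $q\in P_2$. Then $A\cup\{q\}$ and $B\cup\{q\}$ are connected, so $A$ and $B$ are joined through $\{q\}$. This needs $A$ and $B$ to be nonempty, or handled separately: $\emptyset$ is adjacent to $\{q\}$, and if $A=\emptyset$ the path $\emptyset,\{q\},B$ has length $2$. The case $A,B\subseteq P_2$ is symmetric. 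Hence $\operatorname{diam}\Ccal(P)\le2$.

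\emph{Consequences.} A series-parallel poset is a disjoint union of components, each of which is a single element or an ordinal sum of two nonempty posets. Diameters are additive over disjoint unions (Example~\ref{ex:diam}), and a single element has $\delta=1$. Combining these with the result above gives $\operatorname{diam}\Ocal(P)=\operatorname{diam}\Ccal(P)=\sum_K\delta(K)$. A poset with a minimum $\hat0$ is $\{\hat0\}\oplus(P\setminus\hat0)$, or is a single point, so both diameters are at most $2$; the case of a maximum is symmetric.

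The only delicate step is the $\Ocal$ case involving the ideal $P_1$ itself. There I must make sure the intermediate ideal exists and that the single-element step is an edge, which holds because $P_1$ and $P_2$ are nonempty.
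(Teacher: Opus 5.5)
Your proposal is correct and follows the paper's proof essentially step by step. You use the same lower bound from Theorem~\ref{prop:widthdiam} and the same split of ideals: those properly inside $P_1$ are joined via $P$, those properly containing $P_1$ are joined via $\emptyset$, and pairs across the two classes are directly adjacent. You handle the ideal $P_1$ itself through the same intermediate ideals $P_1\setminus\{m\}$ and $P_1\cup\{m'\}$, route the antichain case through a singleton of the other summand, and reduce to components as the paper does. (Your separate treatment of $\emptyset$ in the antichain case is unnecessary, since $\emptyset\cup\{q\}=\{q\}$ is connected, but it does no harm.)
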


\begin{proof}
If $P$ is a chain, both polytopes are simplices, of diameter $1$. Otherwise, both diameters are at least $2$, as in the proof of Theorem~\ref{prop:widthdiam}, and it remains to bound them by $2$.

The antichains of $P$ are those of $P_1$ and those of $P_2\,$. If $A\subseteq P_1$ and $B\subseteq P_2$ are nonempty antichains, then $A\triangle B=A\cup B$ is connected, because every element of $A$ is below every element of $B$; so $A$ and $B$ are adjacent. Any two antichains of the same $P_i\,$, including $\emptyset$, are both adjacent to $\{c\}$ for any $c$ in the other part, by the same argument.

The ideals of $P$ are the ideals $I$ of $P_1$ and the sets $P_1\cup J$ with $J$ a nonempty ideal of $P_2\,$. Every ideal $I\subsetneq P_1$ is adjacent to $P$, because $P\setminus I=(P_1\setminus I)\cup P_2$ is connected; every ideal $P_1\cup J$ with $J\neq\emptyset$ is adjacent to $\emptyset$, because $P_1\cup J$ is connected; and an ideal $I\subsetneq P_1$ is adjacent to an ideal $P_1\cup J$ with $J\neq\emptyset$, because $(P_1\setminus I)\cup J$ is connected. Finally, $P_1$ is adjacent to $P_1\setminus\{p_1\}$ and to $P_1\cup\{p_2\}$ for any $p_1\in\max P_1$ and $p_2\in\min P_2\,$, and these are adjacent to every ideal $P_1\cup J$ with $J\neq\emptyset$ and to every ideal $I\subsetneq P_1\,$, respectively, because $\{p_1\}\cup J$ and $(P_1\setminus I)\cup\{p_2\}$ are connected. Hence any two ideals are at distance at most $2$.

For the consequences, a connected series-parallel poset with at least two elements is an ordinal sum of two nonempty posets, so the first statement and additivity over disjoint unions give the formula; and a poset with a minimum or a maximum, and at least two elements, is of the form $A_1\oplus R$ or $R\oplus A_1$ with $R$ nonempty.
\end{proof}

For zigzag and crown posets, the upper bound is attained, and the bound on distances in Theorem~\ref{prop:widthdiam}(i) is exact.

\begin{theorem}\label{prop:zigzagdiam}
Let $P$ be $Z_n$ with $n\ge1$, or $\mathrm{Cr}_n$ with $n\ge2$. Then the distance between antichains $A$ and $B$ in the graph of $\Ccal(P)$ is $\nu(A\triangle B)$. Consequently, $\operatorname{diam}\Ocal(P)=\operatorname{diam}\Ccal(P)=w(P)$, which is $\lceil n/2\rceil$ for $Z_n$ and $n$ for $\mathrm{Cr}_n\,$.
\end{theorem}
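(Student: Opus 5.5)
The upper bound is Theorem~\ref{prop:widthdiam}(i), so the plan is to prove the reverse inequality: the distance between $\rho(A)$ and $\rho(B)$ in the graph of $\Ccal(P)$ is at least $\nu(A\triangle B)$.

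\emph{Reduction to a path or cycle.} Since $P$ contains no three-element chain, connectivity of a subset $D\subseteq P$ is connectivity of $D$ in the comparability graph $G(P)$. That graph is the path $a_1\cdots a_n$ for $Z_n$ and the cycle $a_1b_1\cdots a_nb_n$ for $\mathrm{Cr}_n$. So $\nu(D)$ is the number of maximal runs of consecutive vertices of $D$ along the path or cycle.

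\emph{An edge changes $\nu$ by at most one.} Fix $A$, and suppose $\rho(B)$ and $\rho(B')$ are adjacent, so $T:=B\triangle B'$ is connected by Theorem~\ref{thm:HL}. Then
\[
  A\triangle B'=(A\triangle B)\triangle T ,
\]
where $T$ is a segment of the path, or an arc of the cycle, or the whole cycle. I claim that toggling a contiguous segment $T$ changes $\nu$ by at most $1$. To see this, encode $D$ as a $0/1$ string along the path, padded with a $0$ at each end. Then $\nu(D)$ is half the number of $01/10$ transitions. Toggling the contiguous block $T$ preserves every transition strictly inside $T$ and every transition outside $T$. Only the two transitions at the ends of $T$ can change, each by $\pm1$. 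So the number of transitions changes by $-2$, $0$ or $2$, and $\nu$ changes by at most $1$.

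For the cycle the same count applies without padding, with $\nu(D)$ equal to half the number of transitions, except when $D$ is the whole cycle, where $\nu(D)=1$ with no transitions. There are two special cases:
\begin{itemize}
\item If $D$ or $D\triangle T$ is the whole cycle, then the other of the two is a single arc or empty, so $\nu$ changes by at most $1$.
\item If $T$ is the whole cycle, then $D\triangle T$ is the complement of $D$, which has the same number of runs as $D$ unless $D$ is empty or the whole cycle; in those cases the change is by $1$.
\end{itemize}

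\emph{Lower bound by induction.} Since $\nu(A\triangle A)=0$, induction along a shortest path from $\rho(A)$ to $\rho(B)$ shows that its length is at least $\nu(A\triangle B)$. Together with Theorem~\ref{prop:widthdiam}(i), this gives the stated equality of distances.

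\emph{Diameters.} By Theorem~\ref{prop:widthdiam}(ii), $\operatorname{diam}\Ccal(P)\le w(P)$. For the reverse inequality, take $A=\emptyset$ and $B$ a maximum antichain. Its elements are pairwise nonadjacent in $G(P)$, so $\nu(B)=|B|=w(P)$, and the distance formula gives distance exactly $w(P)$. Hence $\operatorname{diam}\Ccal(P)=w(P)$.

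For $\Ocal(P)$: since $P$ has no three-element chain, $X$ does not appear in $P$. By Theorem~\ref{thm:X}, or directly by the cube isometry of Remark~\ref{rem:limitations}, the two polytopes have isomorphic graphs, so $\operatorname{diam}\Ocal(P)=w(P)$ as well.

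Finally, $w(P)$ is the independence number of $G(P)$. For the path on $n$ vertices this is $\lceil n/2\rceil$, and for the cycle of length $2n$ it is $n$.

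\emph{Main obstacle.} The delicate step is the boundary bookkeeping for the cycle: the special cases where $T$ is the whole cycle, or where $D$ becomes or stops being the whole cycle, fall outside the transition formula. I would check these explicitly as listed above.
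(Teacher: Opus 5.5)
Your proof is correct and follows the paper's outline: the upper bound from Theorem~\ref{prop:widthdiam}(i), the one-step lemma that toggling a connected $T$ lowers $\nu(D)$ by at most one, induction along a path, and then the width together with the isometry of Remark~\ref{rem:limitations} for $\Ocal(P)$. The only difference is how you verify the one-step lemma. You count $0/1$ transitions along the padded path or the cycle, and only the two at the ends of $T$ can change. The paper instead lists the components $K_1,\dots,K_q$ of $D$ meeting $T$ or its two neighbours, and shows that the $q-1$ gaps between them survive as distinct components of $D\triangle T$. Your count is a little shorter and gives the two-sided bound directly, and you handle correctly the whole-cycle cases, where $\nu$ is not half the number of transitions.
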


\begin{proof}
By Theorem~\ref{prop:widthdiam}(i), the distance between $A$ and $B$ is at most $\nu(A\triangle B)$. Conversely, along an edge $[\rho(A'),\rho(A'')]$, the symmetric difference with $B$ changes from $D:=A'\triangle B$ to $D\triangle T$, where $T:=A'\triangle A''$ is connected by Theorem~\ref{thm:HL}. We claim that $\nu(D\triangle T)\ge \nu(D)-1$; because $\nu(B\triangle B)=0$, it then follows that every path from $A$ to $B$ has length at least $\nu(A\triangle B)$.

If $T$ is the whole cycle of $\mathrm{Cr}_n\,$, then $D\triangle T$ is the complement of $D$, which has as many components as $D$ unless $D$ is empty or the whole cycle, and the claim is clear. Otherwise, $T$ is a segment or an arc $t_1t_2\cdots t_r$ of the comparability graph, with neighbors $t_0$ and $t_{r+1}$ outside $T$ when they exist. A component of $D$ disjoint from $\{t_0\,,\dots,t_{r+1}\}$ is unaffected and remains a component of $D\triangle T$. Let $K_1\,,\dots,K_q$ be the other components of $D$, listed in order along $t_0t_1\cdots t_{r+1}\,$; for $\mathrm{Cr}_n\,$, one component may occur at both ends of this list, which only strengthens the following bound. If $q\ge2$, then the $q-1$ gaps between consecutive $K_i$ lie in $T$, so they are contained in $D\triangle T$; they lie in distinct components of $D\triangle T$, because $K_2\,,\dots,K_{q-1}$ lie in $T$ and so are disjoint from $D\triangle T$, and these components are distinct from the unaffected ones. Hence $\nu(D\triangle T)\ge \nu(D)-q+(q-1)$, and the same bound holds trivially if $q\le1$.

Finally, a subset of a path on $n$ vertices has at most $\lceil n/2\rceil$ components, and a subset of a cycle of length $2n$ has at most $n$; equality holds for the antichain $\min P$, namely $\{a_1\,,a_3\,,\dots\}$ for $Z_n$ and $\{a_1\,,\dots,a_n\}$ for $\mathrm{Cr}_n\,$, which is therefore at distance $w(P)$ from $\emptyset$. Because $P$ has no three-element chain, the graphs of $\Ocal(P)$ and $\Ccal(P)$ are isomorphic (Remark~\ref{rem:limitations}).
\end{proof}

An earlier version of this paper conjectured that $|\operatorname{diam}\Ocal(P)-\operatorname{diam}\Ccal(P)|\le1$ for every connected finite poset $P$; this conjecture has been disproved by \citet{Goel}.

%=====================================================================
\section{Looking forward}
%=====================================================================

\begin{conjecture}[{\cite[Conjecture~2.4]{HL-unimod}}]\label{conj:HL}
Let $P$ be a finite poset with $|P|=d\ge2$.
\begin{enumerate}[(b)]
\renewcommand{\theenumi}{\alph{enumi}}\renewcommand{\labelenumi}{\textup{(\theenumi)}}
\item $f_k(\Ocal(P))\le f_k(\Ccal(P))$ for $1\le k\le d-1$.
\item If $f_k(\Ocal(P))=f_k(\Ccal(P))$ for some $1\le k\le d-1$, then $\Ocal(P)$ and $\Ccal(P)$ are unimodularly equivalent.
\end{enumerate}
\end{conjecture}

Part~(a) was known for $k=1$ (Theorem~\ref{thm:HLSS}), $k=2$ (see \cite{FLM}) and $k=d-1$ (see \cite{HL-unimod}), and for all $k$ for several classes of posets (see \cite{AFJ,FL}); it has recently been proved for all $k$ in \cite{Fourier}. Part~(b) is false for $k=1$, because by Theorem~\ref{thm:HLSS} the two polytopes always have the same number of edges, whereas they are unimodularly equivalent only when $P$ is $X$-free (Theorem~\ref{thm:X}); accordingly, part~(b) is restated in \cite{FLM} for $2\le k\le d-1$. For $k\ge2$, part~(b) is known for $k=2$ (see \cite[Theorem~6.1]{FLM}) and $k=d-1$ (see \cite{HL-unimod}), and is open for $3\le k\le d-2$. Theorem~\ref{thm:main} and Freij-Hollanti, Lundstr\"om and Mori~\cite{FLM} suggest indexing the $k$-faces of both polytopes by a connected convex subposet $Q$ together with an antichain of $Q^{\parallel}$, and comparing local invariants of $Q$.

\backmatter

\section*{Declarations}

\begin{itemize}
\item \textbf{Funding.} J.~Lee was supported in part by the Office of Naval Research under grant \mbox{N00014-24-1-2694}, and by the Air Force Office of Scientific Research under award number \mbox{FA9550-24-1-0240}.
\item \textbf{Competing interests.} The author has no relevant financial or non-financial interests to disclose.
\item \textbf{Data availability.} Data sharing is not applicable to this article, as no datasets were generated or analysed during the current study.
\item \textbf{Use of generative AI.} During the preparation of this work, the author used Claude to assist in developing and checking the work. The author takes full responsibility for the content of the published article.
\end{itemize}

% Bibliography style sn-mathphys-num is set by the class option.
\bibliography{refs}

@article{AFJ,
  author  = {Ahmad, Ibrahim and Fourier, Ghislain and Joswig, Michael},
  title   = {Order and chain polytopes of maximal ranked posets},
  journal = {Discrete Applied Mathematics},
  volume  = {390},
  pages   = {28--39},
  year    = {2026},
  doi     = {10.1016/j.dam.2026.03.036}
}

@article{Chvatal,
  author  = {Chv{\'a}tal, V{\'a}clav},
  title   = {On certain polytopes associated with graphs},
  journal = {Journal of Combinatorial Theory, Series B},
  volume  = {18},
  pages   = {138--154},
  year    = {1975},
  doi     = {10.1016/0095-8956(75)90041-6}
}

@article{FL,
  author  = {Freij-Hollanti, Ragnar and Lundstr{\"o}m, Teemu},
  title   = {$f$-vector inequalities for order and chain polytopes},
  journal = {Mathematica Scandinavica},
  volume  = {130},
  number  = {3},
  pages   = {467--486},
  year    = {2024},
  doi     = {10.7146/math.scand.a-143995}
}

@article{FLM,
  author  = {Freij-Hollanti, Ragnar and Lundstr{\"o}m, Teemu and Mori, Aki},
  title   = {Two-dimensional faces of order and chain polytopes},
  journal = {Electronic Journal of Combinatorics},
  volume  = {33},
  number  = {3},
  eid     = {P3.67},
  year    = {2026},
  doi     = {10.37236/14909}
}

@inproceedings{Geissinger,
  author    = {Geissinger, Ladnor},
  title     = {The face structure of a poset polytope},
  booktitle = {Proceedings of the Third Caribbean Conference on Combinatorics and Computing},
  publisher = {University of the West Indies},
  address   = {Cave Hill, Barbados},
  pages     = {125--133},
  year      = {1981}
}

@book{GLS,
  author    = {Gr{\"o}tschel, Martin and Lov{\'a}sz, L{\'a}szl{\'o} and Schrijver, Alexander},
  title     = {Geometric Algorithms and Combinatorial Optimization},
  publisher = {Springer},
  address   = {Berlin},
  year      = {1988},
  doi     = {10.1007/978-3-642-97881-4}
}

@article{HL-unimod,
  author  = {Hibi, Takayuki and Li, Nan},
  title   = {Unimodular equivalence of order and chain polytopes},
  journal = {Mathematica Scandinavica},
  volume  = {118},
  pages   = {5--12},
  year    = {2016},
  doi     = {10.7146/math.scand.a-23291}
}

@article{HL-cutting,
  author  = {Hibi, Takayuki and Li, Nan},
  title   = {Cutting convex polytopes by hyperplanes},
  journal = {Mathematics},
  volume  = {7},
  number  = {5},
  pages   = {Article 381},
  year    = {2019},
  doi     = {10.3390/math7050381}
}

@article{HLSS,
  author  = {Hibi, Takayuki and Li, Nan and Sahara, Yoshimi and Shikama, Akihiro},
  title   = {The numbers of edges of the order polytope and the chain polytope of a finite partially ordered set},
  journal = {Discrete Mathematics},
  volume  = {340},
  pages   = {991--994},
  year    = {2017},
  doi     = {10.1016/j.disc.2017.01.005}
}

@article{LL,
  author  = {Lundstr{\"o}m, Teemu and {Saud Maia Leite}, Leonardo},
  title   = {Order polytopes of crown posets},
  journal = {European Journal of Combinatorics},
  volume  = {133},
  pages   = {Article 104304},
  year    = {2026},
  doi     = {10.1016/j.ejc.2025.104304}
}

@article{MO,
  author  = {Melamed, Michal and Onn, Shmuel},
  title   = {Convex integer optimization by constantly many linear counterparts},
  journal = {Linear Algebra and its Applications},
  volume  = {447},
  pages   = {88--109},
  year    = {2014},
  doi     = {10.1016/j.laa.2014.01.007}
}

@article{OR,
  author  = {Onn, Shmuel and Rothblum, Uriel G.},
  title   = {Convex combinatorial optimization},
  journal = {Discrete \& Computational Geometry},
  volume  = {32},
  pages   = {549--566},
  year    = {2004},
  doi     = {10.1007/s00454-004-1138-y}
}

@article{PZ,
  author  = {Petersen, T. Kyle and Zhuang, Yan},
  title   = {Zig-zag {E}ulerian polynomials},
  journal = {European Journal of Combinatorics},
  volume  = {124},
  pages   = {Article 104073},
  year    = {2025},
  doi     = {10.1016/j.ejc.2024.104073}
}

@article{Picard,
  author  = {Picard, Jean-Claude},
  title   = {Maximal closure of a graph and applications to combinatorial problems},
  journal = {Management Science},
  volume  = {22},
  pages   = {1268--1272},
  year    = {1976},
  doi     = {10.1287/mnsc.22.11.1268}
}

@article{Stanley,
  author  = {Stanley, Richard P.},
  title   = {Two poset polytopes},
  journal = {Discrete \& Computational Geometry},
  volume  = {1},
  pages   = {9--23},
  year    = {1986},
  doi     = {10.1007/BF02187680}
}

@article{VTL,
  author  = {Valdes, Jacobo and Tarjan, Robert E. and Lawler, Eugene L.},
  title   = {The recognition of series parallel digraphs},
  journal = {SIAM Journal on Computing},
  volume  = {11},
  pages   = {298--313},
  year    = {1982},
  doi     = {10.1137/0211023}
}

@misc{Fourier,
  author  = {Fourier, Ghislain},
  title   = {The {H}ibi--{L}i face-number conjecture},
  year    = {2026},
  howpublished = {arXiv:2609.23629},
  doi     = {10.48550/arXiv.2609.23629}
}

@book{EC1,
  author    = {Stanley, Richard P.},
  title     = {Enumerative Combinatorics, Volume 1},
  edition   = {Second},
  series    = {Cambridge Studies in Advanced Mathematics},
  volume    = {49},
  publisher = {Cambridge University Press},
  address   = {Cambridge},
  year      = {2012},
  doi     = {10.1017/CBO9781139058520}
}

@article{IT,
  author  = {Ikebe, Yoshiko T. and Tamura, Akihisa},
  title   = {Ideal polytopes and face structures of some combinatorial optimization problems},
  journal = {Mathematical Programming},
  volume  = {71},
  pages   = {1--15},
  year    = {1995},
  doi     = {10.1007/BF01592241}
}

@article{HK,
  author  = {Hausmann, Dirk and Korte, Bernhard},
  title   = {Colouring criteria for adjacency on 0--1-polyhedra},
  journal = {Mathematical Programming Study},
  volume  = {8},
  pages   = {106--127},
  year    = {1978},
  doi     = {10.1007/BFb0121197}
}

@book{Hausmann,
  author    = {Hausmann, Dirk},
  title     = {Adjacency on Polytopes in Combinatorial Optimization},
  series    = {Mathematical Systems in Economics},
  volume    = {49},
  publisher = {Anton Hain},
  address   = {Meisenheim am Glan},
  year      = {1980}
}

@misc{ABBLS,
  author       = {Aliniaeifard, Farid and Benedetti, Carolina and Bergeron, Nantel and Li, Shu Xiao and Saliola, Franco},
  title        = {Stable set polytopes and their 1-skeleta},
  howpublished = {arXiv:1804.00360},
  year         = {2021},
  doi          = {10.48550/arXiv.1804.00360}
}

@misc{Goel,
  author       = {Goel, Dave},
  title        = {Lee's conjecture on the diameters of order and chain polytopes fails in both directions},
  howpublished = {Preprint, Zenodo},
  year         = {2026},
  doi          = {10.5281/zenodo.23351399}
}

\end{document}